\newtheoremstyle{myplain}{12pt plus 3pt minus 3pt}{12pt plus 3pt minus 3pt}{\itshape}{}{\bfseries}{.}{ }{}
\newtheoremstyle{mydefinition}{12pt plus 3pt minus 3pt}{12pt plus 3pt minus 3pt}{}{}{\bfseries}{.}{ }{}
\theoremstyle{myplain}
\newtheorem{theorem}{Theorem}
\newtheorem{lemma}{Lemma}
\newtheorem{corollary}{Corollary}
\theoremstyle{mydefinition}
\newtheorem{definition}{Definition}
\newtheorem{assumption}{Assumption}
\newtheorem{remarkx}{Remark}
\endexamplex\vspace{2ex}}
\setlist{align=right,labelindent=0.2em,labelwidth=1.5em,labelsep*=0.6em,leftmargin=!,topsep=0ex,partopsep=0ex,parsep=0ex,itemsep=0ex}
\setlist[itemize,1]{label=\raisebox{0.0em}{\scalebox{1.0}{$\bullet$}}}
\setlist[itemize,2]{label=\raisebox{0.1em}{\scalebox{0.5}{$\blacksquare$}}}
\setlist[itemize,3]{label=\raisebox{0.1em}{\scalebox{0.7}{$\blacktriangleright$}}}
\setlist[itemize,4]{label=\raisebox{0.1em}{\scalebox{0.6}{$\blacklozenge$}}}
\setlist[enumerate]{label=(\alph*)}
\renewcommand{\P}{\ensuremath{\mathbb{P}}}
\newcommand{\E}{\ensuremath{\mathbb{E}}}
\newcommand{\R}{\ensuremath{\mathbb{R}}}
\newcommand{\e}{\ensuremath{\mathrm{e}}}
\renewcommand{\epsilon}{\varepsilon}
\newcommand{\bigO}{\ensuremath{\mathcal{O}}}
\newcommand{\bigOp}[1][\P]{\ensuremath{\bigO_{\scalebox{0.5}{$#1$}}}}
\newcommand{\smallO}{\ensuremath{o}}
\newcommand{\smallOp}[1][\P]{\ensuremath{\smallO_{\scalebox{0.5}{$#1$}}}}
\newcommand{\bigTheta}{\ensuremath{\Theta}}
\newcommand{\bigThetap}[1][\P]{\ensuremath{\bigTheta_{\scalebox{0.5}{$#1$}}}}
\newcommand{\bigOmega}{\ensuremath{\Omega}}
\newcommand{\bigOmegap}[1][\P]{\ensuremath{\bigOmega_{\scalebox{0.5}{$#1$}}}}
\newcommand{\1}[1]{\ensuremath{\mathbbm{1}{\raisebox{-0.25ex}{\hspace{-0.05em}\scriptsize{}$#1$}}}}
\DeclareMathOperator*{\argmax}{arg\,max}
\newcommand{\medvee}{\mathbin{\raisebox{-0.25ex}{\scalebox{1.6}{$\vee$}}}}
\newcommand{\medwedge}{\mathbin{\raisebox{-0.25ex}{\scalebox{1.6}{$\wedge$}}}}
\newcommand{\invf}{{\mathchoice{\scalebox{0.7}[1.0]{$\displaystyle{}-$}1}{\scalebox{0.7}[1.0]{$\textstyle{}-$}1}{\scalebox{0.7}[1.0]{$\scriptstyle{}-$}1}{\scalebox{0.7}[1.0]{$\scriptscriptstyle{}-$}1}}}
\newcommand{\test}{\psi}
\newcommand{\testlr}{\test_n^\textup{\scriptsize\textsc{lr}}}
\newcommand{\risk}{R}
\newcommand{\w}{\theta}
\newcommand{\wmax}{\w_{\textup{max}}}
\newcommand{\wmin}{\w_{\textup{min}}}
\newcommand{\pmax}{p_{\textup{max}}}
\newcommand{\pmin}{p_{\textup{min}}}
\newcommand{\scaling}[1][]{\rho_{#1}}
\newcommand{\stat}[1][]{T_{#1}}
\newcommand{\statkp}[1][]{\stat[{#1}]^{\textup{\textsf{\scriptsize{}k}}}}
\newcommand{\statup}[1][]{\stat[{#1}]^{\textup{\textsf{\scriptsize{}u}}}}
\newcommand{\est}[1][]{\mathrlap{\mathchoice{\phantom{\scalebox{0.08}{$\displaystyle{}e(C)$}}\raisebox{0.3ex}{$\widehat{\phantom{\scalebox{0.70}{$\displaystyle{}e(C)$}}}$}}{\phantom{\scalebox{0.08}{$\textstyle{}e(C)$}}\raisebox{0.3ex}{$\widehat{\phantom{\scalebox{0.70}{$\textstyle{}e(C)$}}}$}}{\phantom{\scalebox{0.08}{$\scriptstyle{}e(C)$}}\raisebox{-0.1ex}{$\widehat{\phantom{\scalebox{0.70}{$\scriptstyle{}e(C)$}}}$}}{\phantom{\scalebox{0.08}{$\scriptscriptstyle{}e(C)$}}\raisebox{-0.4ex}{$\widehat{\phantom{\scalebox{0.72}{$\scriptscriptstyle{}e(C)$}}}$}}}\mathrlap{\smash{e(#1)}}\phantom{e(#1)}}
\newcommand{\esttr}[1][]{\ensuremath{\est[#1]\mathchoice{\hspace{-0.24em}\raisebox{1ex}{\scalebox{0.8}{$\vee$}}}{\hspace{-0.24em}\raisebox{1ex}{\scalebox{0.8}{$\vee$}}}{\hspace{-0.18em}\raisebox{0.65ex}{\scalebox{0.6}{$\vee$}}}{\hspace{-0.16em}\raisebox{0.55ex}{\scalebox{0.5}{$\vee$}}}}}
\newcommand{\optimalsubgraph}[1][]{D^{\star}\def\temp{#1}\ifx\temp\empty{}\else\!(#1)\fi}
\newcommand{\uniformboundset}{\ensuremath{\mathcal{D}}}
\newcommand{\truncation}[1][]{\ensuremath{\Gamma_{#1}}}
\newcommand{\truncationset}[1][]{\ensuremath{\mathcal{E}_{#1}}}
\newcommand{\cutoff}[1][]{\ensuremath{\zeta_{#1}}}
\title{Detecting a planted community in an inhomogeneous random graph}
\author[{}\hspace{0.5pt}\protect\hyperlink{hyp:affil1}{a},\protect\hyperlink{hyp:email1}{1}]{\protect\hypertarget{hyp:author1}{Kay Bogerd}}
\author[{}\hspace{0.5pt}\protect\hyperlink{hyp:affil1}{a},\protect\hyperlink{hyp:email2}{2}]{\protect\hypertarget{hyp:author2}{Rui M. Castro}}
\author[{}\hspace{0.5pt}\protect\hyperlink{hyp:affil1}{a},\protect\hyperlink{hyp:email3}{3}]{\protect\hypertarget{hyp:author3}{Remco van der Hofstad}}
\author[{}\hspace{0.5pt}\protect\hyperlink{hyp:affil2}{b},\protect\hyperlink{hyp:email4}{4}]{\protect\hypertarget{hyp:author4}{Nicolas Verzelen}}
\affil[ ]{\small\textsuperscript{\protect\hypertarget{hyp:affil1}{a}}\hspace{0.5pt}Eindhoven University of Technology, \textsuperscript{\protect\hypertarget{hyp:affil2}{b}}\hspace{0.5pt}INRA}
\affil[ ]{\small{}%E-mail: 
\parbox{400pt}{
\textsuperscript{\protect\hypertarget{hyp:email1}{1}}\hspace{0.5pt}\texttt{\footnotesize\href{mailto:k.m.bogerd@tue.nl}{k.m.bogerd@tue.nl}},
\textsuperscript{\protect\hypertarget{hyp:email2}{2}}\hspace{0.5pt}\texttt{\footnotesize\href{mailto:rmcastro@tue.nl}{rmcastro@tue.nl}},
\textsuperscript{\protect\hypertarget{hyp:email3}{3}}\hspace{0.5pt}\texttt{\footnotesize\href{mailto:r.w.v.d.hofstad@tue.nl}{r.w.v.d.hofstad@tue.nl}},
\textsuperscript{\protect\hypertarget{hyp:email4}{4}}\hspace{0.5pt}\texttt{\footnotesize\href{mailto:nicolas.verzelen@inra.fr}{nicolas.verzelen@inra.fr}}}}
\date{\today}\footnotesize
\begin{document}
\noindent\makebox[\textwidth][c]{\begin{minipage}[c]{1.3\textwidth}\maketitle\end{minipage}}

\begin{abstract}
We study the problem of detecting whether an inhomogeneous random graph contains a planted community. Specifically, we observe a single realization of a graph. Under the null hypothesis, this graph is a sample from an inhomogeneous random graph, whereas under the alternative, there exists a small subgraph where the edge probabilities are increased by a multiplicative scaling factor. We present a scan test that is able to detect the presence of such a planted community, even when this community is very small and the underlying graph is inhomogeneous. We also derive an information theoretic lower bound for this problem which shows that in some regimes the scan test is almost asymptotically optimal. We illustrate our results through examples and numerical experiments.
\end{abstract}

%%%%%%%%%%%%%%%%%%%%%%%%%%%%%%%%%%%%%%%%%%%%%%%%%%%%%%%%%%%%%%%%%%%%%%%%%%%%%%%%
%%%%%%%%%%%%%%%%%%%%%%%%%%%%%%%%%%%%%%%%%%%%%%%%%%%%%%%%%%%%%%%%%%%%%%%%%%%%%%%%
%%%%%%%%%%%%%%%%%%%%%%%%%%%%%%%%%%%%%%%%%%%%%%%%%%%%%%%%%%%%%%%%%%%%%%%%%%%%%%%%
\section{Introduction}
\label{sec:introduction}
Many complex systems can be described by networks of vertices connected by edges. Usually, these systems can be organized in communities, with certain groups of vertices being more densely connected than others. A central topic in the analysis of these systems is that of community detection where the goal is to find these more densely connected groups. This can often reveal interesting properties of the network with important applications in sociology, biology, computer science, and many other areas of science \cite{Fortunato2010}.

Much of the community detection literature is concentrated around methods that extract the communities from a given network, see \cite{Girvan2002,Newman2004,Newman2006}. These methods typically output an estimate of the community structure regardless of whether it really is present. Therefore, it is important to investigate when an estimated community structure is meaningful and when it simply is an artifact of the algorithm.

To answer this question, it has been highly fruitful to analyze the performance of these methods on random graphs with a known community structure. The stochastic block model is arguably the simplest model that still captures the relevant community structure, and the study of this model has led to many interesting results \cite{Massoulie2014,Mossel2015,Mossel2018,Abbe2017,Caltagirone2018,Bordenave2018}. However, there are significant drawbacks because of this simplicity: the communities are typically assumed to be very large (i.e., linear in the graph size), and the graph is homogeneous within each community (i.e., vertices within a community are exchangeable and, in particular have the same degree distribution).

To overcome these issues, several suggestions have been made. For example, the degree-corrected block model allows for inhomogeneity of vertices within each community \cite{Karrer2011}. This allows one to model real-world networks more accurately, while remaining tractable enough to obtain results similar to those obtained for the stochastic block model \cite{Gulikers2017,Gulikers2018,Gao2018a,Jin2018,Jin2019}. However, the degree-corrected block model still assumes that communities are large. To detect small communities, Arias-Castro and Verzelen consider a hypothesis testing problem where the goal is not to find communities, but instead decide whether or not any communities structure is present in an otherwise homogeneous graph \cite{Arias-Castro2014,Arias-Castro2015}.

In this paper, we also focus on the detection of small communities and we investigate when it is possible to detect the presence of a small community in an already inhomogeneous random graph. In particular, we present a scan test and provide conditions under which it is able to detect the presence of a small community. These results are valid under a wide variety of parameter choices, including cases where the underlying graph is inhomogeneous. Furthermore, we show that for some parameter choices the scan test is optimal. Specifically, we identify assumptions that ensure that if the conditions of the scan test are reversed then it is impossible for any test to detect such a community.

%%%%%%%%%%%%%%%%%%%%%%%%%%%%%%%%%%%%%%%%%%%%%%%%%%%%%%%%%%%%%%%%%%%%%%%%%%%%%%%%
%%%%%%%%%%%%%%%%%%%%%%%%%%%%%%%%%%%%%%%%%%%%%%%%%%%%%%%%%%%%%%%%%%%%%%%%%%%%%%%%
%%%%%%%%%%%%%%%%%%%%%%%%%%%%%%%%%%%%%%%%%%%%%%%%%%%%%%%%%%%%%%%%%%%%%%%%%%%%%%%%
\section{Model and results}
\label{sec:model_and_results}
We consider the problem of detecting a planted community inside an inhomogeneous random graph. This is formalized as a hypothesis testing problem, where we observe a single instance of a simple undirected random graph $G = (V, E)$, with vertex set $V$ and edge set $E$. We denote the adjacency matrix of $G$ by $A$, i.e. $A_{ij} = \1{\{(i, j) \in E\}}$. That is, $A_{ij} = 1$ if and only if there is an edge between the vertices $i, j \in V$. Because we only consider simple graphs, we have $A_{ii} = 0$ for all $i \in V$.

Under the null hypothesis, denoted by $H_0$, the observed graph is an inhomogeneous random graph on $|V| = n$ vertices, where an edge between two vertices $i,j \in V$ is present, independently of all other edges, with probability $p_{ij}$. In other words, the entries of the adjacency matrix $A$ are independent Bernoulli random variables such that $\P_0(A_{ij} = 1) = p_{ij}$. The alternative hypothesis, denoted by $H_1$, is similar, but within a subset of the vertices the connection probabilities are increased. Formally, there is a subset $C \subseteq V$ of size $|C| = r$, called the planted community, for which the edge probabilities are increased by a multiplicative scaling factor $\scaling[C] \geq 1$. Concretely, under the alternative hypothesis the edge probabilities are $\P_1(A_{ij} = 1) = \scaling[C] \mspace{1mu} p_{ij}$ for $i, j \in C$ and $\P_1(A_{ij} = 1) = p_{ij}$ otherwise. Note that the scaling $\scaling[C]$ is allowed to depend on the location of the planted community $C \subseteq V$. This is necessary because our graphs are inhomogeneous, making the problem difficulty dependent on the location of the planted community $C \subseteq V$. Specifically, on a sparse region of the graph it is relatively difficult to detect a planted community so a strong signal $\scaling[C]$ is required to ensure a significant difference between the edge probabilities under the null hypothesis $\P_0(A_{ij} = 1) = p_{ij}$ and the edge probabilities under the alternative hypothesis $\P_1(A_{ij} = 1) = \scaling[C] p_{ij}$. On the other hand, when the community is planted on a dense region the problem is easier and a smaller signal $\scaling[C]$ could be sufficient. Throughout this paper, we assume that the location of the planted community $C \subseteq V$ is unknown, but that we do know its size $|C| = r$. In particular, we focus on the setting where $r \to \infty$ and is much smaller than $n$.

In our analysis we begin by considering the (unrealistic) case where the parameters $p_{ij}$ are all known. This allows us to get a precise characterization of the statistical difficulty of the problem. In Section~\ref{subsec:scan_test_for_unknown_rank1_edge_probabilities} we relax this assumption and show that it is possible to adapt to unknown parameters under some conditions on the structure of the edge probabilities $p_{ij}$. In particular, there we will assume that the random graph is rank-1, so that $p_{ij} = \w_i \w_j$ for some vertex weights $(\theta_i)_{i=1}^n$.

To summarize, our goal is to decide whether a given graph contains a planted community, or equivalently to decide between the hypotheses:

\begin{enumerate}[labelindent=1pt]
\vspace{-0.2\belowdisplayskip}
\item[$H_0$:]
There is no planted community, that is
\vspace{-0.7\abovedisplayskip}
\begin{flalign*}
\quad A_{ij} &\sim
  \begin{cases}
    \makebox[60pt][l]{$\text{Bern}(p_{ij}),$} &\qquad \text{if } i \neq j,\\
    \makebox[60pt][l]{$0,$} &\qquad \text{otherwise}.
  \end{cases}&&
\end{flalign*}
\vspace{-1.0\belowdisplayskip}

\item[$H_1$:]
There exists a planted community $C \subseteq V$ of size $|C| = r$, and $\scaling[C] > 1$, such that
\vspace{-0.8\abovedisplayskip}
\begin{flalign*}
\quad A_{ij} \sim
    \begin{cases}
    \makebox[60pt][l]{$\text{Bern}(\scaling[C] \mspace{1mu} p_{ij}),$} &\qquad \text{if } i \neq j, \text{ and } i, j \in C,\\
    \makebox[60pt][l]{$\text{Bern}(p_{ij}),$} &\qquad \text{if } i \neq j, \text{ and } i \notin C \text{ or } j \notin C,\\
    \makebox[60pt][l]{$0,$} &\qquad \text{otherwise}.
  \end{cases}&&
\end{flalign*}
\vspace{-1.2\belowdisplayskip}
\end{enumerate}

Note that in the above definition we are implicitly assuming that $\scaling[C]$ is not too large, so that $\scaling[C] \mspace{1mu} p_{ij} \leq 1$ for all  $i,j \in C$.

Given a graph, we want to determine which of the above models gave rise to the observation. A test $\test_n$ is any function taking as input a graph $g$ on $n$ vertices, and that outputs either $\test_n(g) = 0$ to claim that there is reason to believe that the null hypothesis is true (i.e., no community is present) or $\test_n(g) = 1$ to deem the alternative hypothesis true (i.e., the graph contains a planted community). The worst-case risk of such a test is defined as
\begin{equation}
\label{eq:worst_case_risk}
\risk_n(\test_n) \coloneqq \P_0(\test_n \neq 0) + \max_{C \subseteq V,\, |C|=r} \P_C(\test_n \neq 1) \,,
\end{equation}
where $\P_0(\cdot)$ denotes the distribution under the null hypothesis, and $\P_C(\cdot)$ denotes the distribution under the alternative hypothesis when $C \subseteq V$ is the planted community. A sequence of tests $(\test_n)_{n=1}^{\infty}$ is called asymptotically powerful when it has vanishing risk, that is $\risk_n(\test_n) \to 0$, and asymptotically powerless when it has risk tending to $1$, that is $\risk_n(\test_n) \to 1$.

Our primary goal is to characterize the asymptotic distinguishability between the null and alternative hypothesis as the graph size $n$ increases. Throughout this paper, when limits are unspecified they are taken as the graph size satisfies $n \to \infty$. The other parameters $p_{ij}$, $\scaling[C]$, and $r$ are allowed to depend on $n$, although this dependence is left implicit to avoid notational clutter.

\paragraph{Notation.} We use standard asymptotic notation: $a_n = \bigO(b_n)$ when $| a_n / b_n |$ is bounded, $a_n = \bigOmega(b_n)$ when $b_n = \bigO(a_n)$, $a_n = \bigTheta(b_n)$ when $b_n = \bigO(a_n)$ and $a_n = \bigOmega(b_n)$, $a_n = \smallO(b_n)$ when $a_n / b_n \to 0$, and $a_n \asymp b_n$ when $a_n = (1 + \smallO(1)) b_n$. Also, we use the probabilistic versions of these: $a_n = \bigOp(b_n)$ when $| a_n / b_n |$ is stochastically bounded, $a_n = \bigOmegap(b_n)$ when $b_n = \bigOp(a_n)$, $a_n = \bigThetap(b_n)$ when $b_n = \bigOp(a_n)$ and $a_n = \bigOmegap(b_n)$, and $a_n = \smallOp(b_n)$ when $a_n / b_n$ converges to $0$ in probability.

We write $e(C) \coloneqq \sum_{i,j \in C} A_{ij}$ for the number of edges in the subgraph induced by $C \subseteq V$, and $e(C,-C) \coloneqq \sum_{i \in C, j \notin C} A_{ij}$ for the number of edges between $C$ and its complement $-C = V \setminus C$. For two numbers $a, b \in \R$, we write $a \wedge b = \min\{a,b\}$, $a \vee b = \max\{a,b\}$, and $[a]_{+} = \max\{a,0\}$. Finally, define the entropy function
\begin{equation}
\label{eq:poisson_rate_function}
h(x) \coloneqq (x+1) \log(x+1) - x \,.
\end{equation}
This function plays a prominent role in most of the results.

%%%%%%%%%%%%%%%%%%%%%%%%%%%%%%%%%%%%%%%%%%%%%%%%%%%%%%%%%%%%%%%%%%%%%%%%%%%%%%%%
%%%%%%%%%%%%%%%%%%%%%%%%%%%%%%%%%%%%%%%%%%%%%%%%%%%%%%%%%%%%%%%%%%%%%%%%%%%%%%%%
\subsection{Information theoretic lower bound}
\label{subsec:information_theoretic_lower_bound}
We start with a result highlighting conditions under which all tests are asymptotically powerless. Here we assume that the edge probabilities $p_{ij}$, the scaling parameters $\scaling[C]$, and the size of the planted community $|C| = r$ are all known. When some of these parameters are unknown, the problem of detecting a planted community might become more difficult, hence any test that is asymptotically powerless when these parameters are known remains asymptotically powerless when they are unknown.

We prove a lower bound under two different sets of assumptions. To state these assumptions we define the \emph{average edge probability} as $\overline{p}_D = \E_0[e(D)] / \smash{\binom{|D|}{2}}$ for any $D \subseteq V$. Our assumptions correspond to different regimes of the problem in terms of planted community size $r$. For large communities we need to restrict, in a moderate way, the amount of inhomogeneity in the underlying graph, with larger communities requiring stronger restrictions on the amount of inhomogeneity. This results in the following assumption:%
\renewcommand{\theassumption}{1.1}%
\begin{assumption}
\label{ass:lower_bound_maximum_inhomogeneity}
There exists $\delta \in (0, 1/2)$ such that the following conditions hold:
\begin{enumerate}[label=(\roman*),topsep=2pt,partopsep=0pt,itemsep=2pt]
\item\label{ass:lbl:lower_bound_maximum_inhomogeneity_size}
The planted community cannot be too large, that is $r = \bigO(\smash{n^{1/2 - \delta}})$.
\item\label{ass:lbl:lower_bound_maximum_inhomogeneity_inhomogeneity}
On subgraphs $D$ much smaller than the planted community $C$, the relative edge density $\overline{p}_D / \overline{p}_C$ cannot be too large. Specifically, there exists $0 < \gamma_n = \smallO(1)$ such that
\begin{equation}
\label{eq:lower_bound_maximum_inhomogeneity}
\raisebox{3pt}{$\displaystyle\max_{C \subseteq V,|C| = r} \max_{\substack{D \subseteq C,\\|D| < r / \smash{(n/r)^{\raisebox{1pt}{$\scriptscriptstyle\mspace{-2mu}\gamma_n$}}}}}$} \: \frac{|D| \mspace{3mu} \overline{p}_D}{|C| \mspace{3mu} \overline{p}_C}
  \leq \delta \,.
\end{equation}
\item\label{ass:lbl:lower_bound_maximum_inhomogeneity_sparsity}
Every potential community $C$ must be dense enough. Specifically,
\begin{equation}
\displaystyle\max_{C \subseteq V,|C| = r} \: \frac{1}{\overline{p}_C} = \smallO\left(\frac{r}{\log(n / r)}\right) \,.
\end{equation}
\end{enumerate}
%Let $r = \bigO(\smash{n^{1/2 - \delta}})$ for $\delta \in (0, 1/2)$ and $1 / \overline{p}_C = \smallO(r / \log(n / r))$. Then, for all $C \subseteq V$ of size $|C| = r$ and for all $D \subseteq C$ of size $|D| < r / \smash{(n / r)^{\smallO(1)}}$, we assume
%\begin{equation}
%\label{eq:lower_bound_maximum_inhomogeneity}
%\frac{\overline{p}_D}{\overline{p}_C}
%  \leq \delta \mspace{2mu} \frac{|C|}{|D|}\,.
%\end{equation}
%where $\overline{p}_C = \E_0[e(C)] / \binom{|C|}{2}$ and $\overline{p}_D = \E_0[e(D)] / \binom{|D|}{2}$.
\end{assumption}
Note that the inhomogeneity restriction in Assumption~\ref{ass:lower_bound_maximum_inhomogeneity}~\ref{ass:lbl:lower_bound_maximum_inhomogeneity_inhomogeneity} only applies to small subsets $D \subseteq C$. In particular, we have $|D| / |C| < (r / n)^{\gamma_n}$ in \eqref{eq:lower_bound_maximum_inhomogeneity}, and thus if the edge probabilities differ by at most a multiplicative factor of $\bigO(\log(n)^k)$, for some fixed constant $k > 0$, then \eqref{eq:lower_bound_maximum_inhomogeneity} can always be satisfied by choosing a sequence $\gamma_n$ that converges to zero slowly enough. For example, in the homogeneous setting where the graph is an Erd\H{o}s-R\'{e}nyi random graph we know that all edge probabilities are equal and therefore \eqref{eq:lower_bound_maximum_inhomogeneity} is easily satisfied for any fixed $\delta \in (0, 1/2)$.

%Note that in the homogeneous setting where the graph is an Erd\H{o}s-R\'{e}nyi random graph we invariably have $\overline{p}_D / \overline{p}_C = 1$, so the condition in \eqref{eq:lower_bound_maximum_inhomogeneity} is easily satisfied.

If the planted community size $r$ is much smaller than allowed by Assumption~\ref{ass:lower_bound_maximum_inhomogeneity}, then it is not needed to have a restriction on the inhomogeneity, provided that the graph is dense enough. This gives the following assumption:%
\renewcommand{\theassumption}{1.2}%
\begin{assumption}
\label{ass:lower_bound_maximum_community_small}
We assume that the following two conditions hold:
\begin{enumerate}[label=(\roman*),topsep=2pt,partopsep=0pt,itemsep=2pt]
\item\label{ass:lbl:lower_bound_maximum_community_small_size}
The planted community is small enough. In particular, we require that $r = n^{\smallO(1)}$.
\item\label{ass:lbl:lower_bound_maximum_community_small_sparsity}
Every potential community $C$ must be dense enough. Specifically,
\begin{equation}
\label{eq:lower_bound_maximum_community_size}
\max_{C \subseteq V,|C| = r} \: \log\left(\frac{1}{\overline{p}_C}\right) = \smallO\left(\frac{\log(n / r)}{\log(r)}\right) \,.\notag
\end{equation}
\end{enumerate}
%Let $r = n^{\smallO(1)}$. Then, for all $C \subseteq V$ of size $|C| = r$, we assume
%\begin{equation}
%\label{eq:lower_bound_maximum_community_size}
%\log\left(\frac{1}{\overline{p}_C}\right) = \smallO\left(\frac{\log(n/r)}{\log(r)}\right) \,.
%\end{equation}
\end{assumption}
Note that we only need one of the two assumptions above to hold in order to prove the lower bound in this section. The difference between these two assumptions is that Assumption~\ref{ass:lower_bound_maximum_inhomogeneity} works best when the planted community is large, whereas Assumption~\ref{ass:lower_bound_maximum_community_small} is more easily satisfied if the planted community is small.
Furthermore, we need that the underlying graph is not too dense. This is made precise in the following assumption:
\renewcommand{\theassumption}{\arabic{assumption}}
\setcounter{assumption}{1}
\begin{assumption}
\label{ass:lower_bound_sparsity}
%We assume that, for all $C \subseteq V$ of size $|C| = r$ and for all $i, j \in C$, we have $\scaling[C]^2 \mspace{1mu} p_{ij} \to 0$.
We require that $\max_{C \subseteq V, |C| = r} \max_{i,j \in C} \scaling[C]^2 \mspace{1mu} p_{ij} \to 0$ as $n \to \infty$.
\end{assumption}
This assumption accomplishes two goals. First, since $\scaling[C] > 1$ it forces $p_{ij} \to 0$ for every $i, j \in V$. This ensures that the number of edges in subsets of the vertices is in essence a sufficient statistic for the testing problem. Secondly, at a more technical level, $p_{ij} \to 0$ is necessary for the Poisson approximations we use and it ensures that the differences in edge probabilities $p_{ij}$ are not magnified too much under the alternative. We note that Assumption~\ref{ass:lower_bound_sparsity} is not needed when the underlying graph is homogeneous (i.e., when the null hypothesis corresponds to an Erd\H{o}s-R\'{e}nyi random graph), see \cite{Arias-Castro2014}.

We further discuss Assumptions \ref{ass:lower_bound_maximum_inhomogeneity}, \ref{ass:lower_bound_maximum_community_small}, and \ref{ass:lower_bound_sparsity} in more detail in Section~\ref{sec:examples}. In that section we give several examples of random graphs that satisfy these assumptions.

This brings us to the main result of this section, providing conditions under which all tests are asymptotically powerless by deriving a minimax lower bound:
\begin{theorem}
\label{thm:lower_bound}
Suppose that Assumption~\ref{ass:lower_bound_sparsity} and either Assumption \ref{ass:lower_bound_maximum_inhomogeneity} or \ref{ass:lower_bound_maximum_community_small} holds. Let $0 < \epsilon < 1$ be fixed. Then all tests are asymptotically powerless if, for all $C \subseteq V$ of size $|C| = r$,
\begin{equation}
\label{eq:lower_bound_condition}
\max_{D \subseteq C} \frac{\E_0[e(D)] \mspace{1mu} h\bigl(\scaling[C] - 1\bigr)}{|D| \mspace{1mu} \log(n / |D|)} \leq 1 - \epsilon \,.
\end{equation}
\end{theorem}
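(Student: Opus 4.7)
The plan is a Bayes-risk reduction combined with a truncated second-moment argument. I would first pass to the Bayes risk with a uniform prior on the planted set $C$ of size $r$, forming the mixture $\overline{\P}_1 \coloneqq \binom{n}{r}^{\invf} \sum_{|C|=r} \P_C$. Since the worst-case risk satisfies $\risk_n(\test_n) \geq 1 - \|\overline{\P}_1 - \P_0\|_{\mathrm{TV}}$, it suffices to show this total variation distance vanishes. A direct chi-squared computation using the untruncated likelihood ratios $L_C \coloneqq d\P_C/d\P_0$ yields, for each pair $(C,C')$ with intersection $D = C \cap C'$, the identity $\E_0[L_C L_{C'}] = \prod_{\{i,j\}\subseteq D}\bigl(1 + p_{ij}(\scaling[C]-1)^2/(1-p_{ij})\bigr)$, which under Assumption~\ref{ass:lower_bound_sparsity} is at most $\exp\bigl((\scaling[C]-1)^2 \E_0[e(D)]\bigr)$. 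This would give a condition with $(\scaling[C]-1)^2$ in place of the sharper Poisson rate function $h(\scaling[C]-1)$, and is too loose when $\scaling[C]$ is large.

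To recover the $h$ rate function, I would truncate the likelihood ratio. For each candidate $C$, set
\begin{equation*}
\truncationset[C] \coloneqq \bigcap_{D \subseteq C} \bigl\{e(D) \leq \cutoff[D]\bigr\},
\end{equation*}
where $\cutoff[D]$ is chosen slightly above the mean $\scaling[C] \E_0[e(D)]$, tuned via Chernoff/Bennett concentration and a union bound over the $2^r$ subsets $D \subseteq C$ so that $\P_C(\truncationset[C]^c) = \smallO(1)$ uniformly in $C$. The truncated ratio $\widetilde{L}_C \coloneqq L_C \1{\truncationset[C]}$ then still satisfies $\E_0[\widetilde{L}_C] = 1 - \smallO(1)$. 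Writing
\begin{equation*}
\log L_C = e(C) \log \scaling[C] + \sum_{\{i,j\} \subseteq C} (1 - A_{ij}) \log\frac{1 - \scaling[C] p_{ij}}{1 - p_{ij}},
\end{equation*}
and linearizing the second logarithm via $p_{ij} \to 0$ (Assumption~\ref{ass:lower_bound_sparsity}), the truncation deterministically forces $\log L_C \leq (1 + \smallO(1)) \mspace{1mu} h(\scaling[C]-1) \E_0[e(C)]$ on $\truncationset[C]$, and similarly on each sub-community of $C$.

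The core computation is then the pairwise moment $\E_0[\widetilde{L}_C \widetilde{L}_{C'}] = \E_{C'}[\widetilde{L}_C \1{\truncationset[C] \cap \truncationset[C']}]$. Since only the overlap $D = C \cap C'$ contributes to the cross term, and the truncation event for $D$ controls $e(D)$, the previous step yields
\begin{equation*}
\E_0[\widetilde{L}_C \widetilde{L}_{C'}] \leq \exp\bigl((1 + \smallO(1)) \mspace{1mu} h(\scaling[C]-1) \E_0[e(D)]\bigr).
\end{equation*}
Averaging uniformly over $(C,C')$ and decomposing by the overlap size $d = |D|$, the fraction of $C'$ with a given $D \subseteq C$ satisfies $\binom{n-r}{r-d}/\binom{n}{r} \asymp (r/n)^d$, while the number of size-$d$ subsets of $C$ is at most $\binom{r}{d} \leq (\e r/d)^d$. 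Combined with the hypothesis $\E_0[e(D)] \mspace{1mu} h(\scaling[C]-1) \leq (1-\epsilon) d \log(n/d)$, each overlap-$d$ contribution is dominated by $\bigl(\e r^2 / (d^{2-\epsilon} n^\epsilon)\bigr)^d$, and one would want to sum this geometric-type series to $\smallO(1)$.

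The main obstacle is the small-overlap regime, where the combinatorial factor $\binom{r}{d}$ and a possible concentration of edge mass on tiny subsets $D \subseteq C$ can overwhelm the rate-function bound alone. This is exactly what the two alternative assumptions address. Under Assumption~\ref{ass:lower_bound_maximum_inhomogeneity}\,\ref{ass:lbl:lower_bound_maximum_inhomogeneity_inhomogeneity}, the ratio $|D| \overline{p}_D / (|C| \overline{p}_C)$ is capped by $\delta < 1/2$ on subsets with $|D| < r/(n/r)^{\gamma_n}$; propagating this into $\E_0[e(D)] = \binom{|D|}{2}\overline{p}_D$ upgrades the bound to $\E_0[e(D)] \mspace{1mu} h(\scaling[C]-1) \leq \delta(1-\epsilon) d \log(n/r)$, opening enough slack against the combinatorial factor, while Assumption~\ref{ass:lower_bound_maximum_inhomogeneity}\,\ref{ass:lbl:lower_bound_maximum_inhomogeneity_size} limits $r = \bigO(n^{1/2-\delta})$ so that the residual $r^2/n^{\epsilon'}$-type factor stays small. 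Under Assumption~\ref{ass:lower_bound_maximum_community_small}, instead, $r = n^{\smallO(1)}$ makes $\binom{r}{d} \leq r^d$ logarithmically negligible against $d\log(n/d)$. The natural execution is therefore to split the sum over $d$ at the threshold $d^{\star} = r/(n/r)^{\gamma_n}$ and apply the relevant assumption in each regime, closing the argument in both cases.
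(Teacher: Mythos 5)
Your high-level strategy matches the paper's: reduce to Bayes risk, observe that the plain second moment gives the wrong rate $(\scaling[C]-1)^2$, truncate the likelihood ratio, and control the truncated moments. The identification of what the two alternative assumptions buy in the overlap summation is also on target. However, the execution of the truncation contains a genuine gap that the paper works hard to resolve and that your sketch does not.

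The core problem is the tension in choosing $\cutoff[D]$. You set $\cutoff[D]$ ``slightly above the mean $\scaling[C]\E_0[e(D)]$'' and truncate on \emph{all} $D\subseteq C$. But two things go wrong. First, for the first truncated moment you need $\P_C(\truncationset[C]^c)=\smallO(1)$ uniformly, while for the second moment you claim $\E_0[\widetilde{L}_C\widetilde{L}_{C'}]\leq\exp\bigl((1+\smallO(1))\,h(\scaling[C]-1)\E_0[e(D)]\bigr)$. These are incompatible: the truncated second moment (after Cauchy--Schwarz and the optimal Chernoff split) equals, per pair in $D$, roughly $\exp\bigl(H_{p_{ij}}(\cutoff[D]p_{ij})-2H_{\scaling[C]p_{ij}}(\cutoff[D]p_{ij})\bigr)$, which collapses to $\exp\bigl(p_{ij}h(\scaling[C]-1)\bigr)$ \emph{only} when $\cutoff[D]\approx\scaling[C]$, i.e.\ a cutoff essentially at the $\P_C$-mean; but then $\P_C(\truncationset[C])$ is bounded away from $1$ (it is roughly $1/2$ by a CLT argument), so $\E_0[\widetilde{L}]\not\to 1$. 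Pushing $\cutoff[D]$ to a $(1+\delta)$ multiple of $\scaling[C]\E_0[e(D)]$ fixes the first moment only when $\E_C[e(D)]$ is large, and then the naive deterministic bound $\sum A_{ij}\theta\leq\cutoff[D]\theta$ gives an extra factor of $2$ in the exponent, which is fatal: it turns the hypothesis into $2(1-\epsilon)|D|\log(n/|D|)$, which beats the hypergeometric factor $\exp(-|D|\log(n/r^2))$ and the sum diverges. Second, for small or sparse $D$ the mean $\E_C[e(D)]$ is tiny, so any cutoff near the mean that keeps $\P_C(e(D)>\cutoff[D])$ below $\binom{r}{|D|}^{-1}$ would have to be far away from the mean, and your union bound over all $2^r$ subsets cannot close.

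The paper resolves both tensions by (i) truncating \emph{only} on the subclass $\truncationset[C]$ of ``potentially problematic'' subsets where $(\scaling[C]-1)^2\E_0[e(D)]$ is large enough, while handling $D\notin\truncationset[C]$ (the part $\textup{\textsf{P}}_1$) with the untruncated $(\scaling[C]-1)^2$ bound --- which now suffices precisely because those $D$ were excluded; and (ii) defining $\cutoff[D]$ not near the $\P_C$-mean but via the implicit equation $(1+\epsilon)\E_0[e(D)]h(\cutoff[D]-1)=|D|\log(n/|D|)$, then showing (Lemma~\ref{lem:cutoff_property}) that $\E_C[e(D)]\,h(\cutoff[D]/\scaling[C]-1)$ exceeds $|D|\log(r/|D|)$ by a diverging margin. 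The resulting second-moment exponent on $\textup{\textsf{P}}_2$ is $\E_0[e(D)]\bigl(h(\cutoff[D]-1)-2\scaling[C]h(\cutoff[D]/\scaling[C]-1)\bigr)$, not $\E_0[e(D)]h(\scaling[C]-1)$, and it is controlled by combining the defining equation with Lemma~\ref{lem:cutoff_property}. Without the split into $\textup{\textsf{P}}_1$ and $\textup{\textsf{P}}_2$ and this careful cutoff, your bound $\exp\bigl((1+\smallO(1))\,h(\scaling[C]-1)\E_0[e(D)]\bigr)$ is asserted but does not hold. You also omit the step relating $H_p(q)$ to $p\,h(q/p-1)$ (Lemma~\ref{lem:h_ratio}), which is needed to turn the Bernoulli KL divergences into the Poisson rate function.
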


Condition \eqref{eq:lower_bound_condition} has its counterpart in the work by Arias-Castro and Verzelen \cite[see~(9)]{Arias-Castro2014}, who derive a similar result when the underlying graph is an Erd\H{o}s-R\'{e}nyi random graph. However, because of the inhomogeneity in our graphs, the maximum in \eqref{eq:lower_bound_condition} is not necessarily attained at the planted community $C \subseteq V$ of size $|C| = r$, but it could be attained at any of its smaller subgraphs $D \subseteq C$. This is why our condition is more complex.

The result in Theorem~\ref{thm:lower_bound} happens to be tight, even in some scenarios where the edge probabilities $p_{ij}$ are unknown, as we construct a scan test that is powerful when the inequality in \eqref{eq:lower_bound_condition} is, roughly speaking, reversed. This is described in the next sections.

Finally, the proof of Theorem~\ref{thm:lower_bound} is given in Section~\ref{subsec:proof_of_information_theoretic_lower_bound} and follows a common methodology in these cases, by first reducing the composite alternative hypothesis to a simple alternative hypothesis and then characterizing the optimal likelihood ratio test. This is done via a second-moment method, but it requires a highly careful truncation argument to attain the sharp characterization above.

%%%%%%%%%%%%%%%%%%%%%%%%%%%%%%%%%%%%%%%%%%%%%%%%%%%%%%%%%%%%%%%%%%%%%%%%%%%%%%%%
%%%%%%%%%%%%%%%%%%%%%%%%%%%%%%%%%%%%%%%%%%%%%%%%%%%%%%%%%%%%%%%%%%%%%%%%%%%%%%%%
\subsection{Scan test for known edge probabilities}
\label{subsec:scan_test_for_known_edge_probabilities}
In this section we present a scan test that is asymptotically powerful. We first consider the case where all edge probabilities $p_{ij}$ and the community size $|C| = r$ are known. Although this case is unrealistic in practice, it allows us to understand the fundamental statistical limits of detection. In a sense, knowing the edge probabilities $p_{ij}$ is the most optimistic scenario, and so the focus is primarily on whether or not it is possible to detect a planted community. In the subsequent section we relax this assumption by showing how the scan test can be extended when the edge probabilities $p_{ij}$ are unknown.

Our test statistic is inspired by Bennett's inequality (see \cite[Theorem 2.9]{Boucheron2013}), which ensures that, for any $t > 0$,
\begin{equation}
\label{eq:bennett_edge_bound}
\P_0(e(D) - \E_0[e(D)] \geq t)
  \leq \exp\left(-\E_0[e(D)] \mspace{1mu} h\!\left(\frac{t}{\E_0[e(D)]}\right)\right) \,,
\end{equation}
where we recall that $h(x) = (x+1) \log(x+1) - x$. Note that this inequality is also valid when we are under the alternative hypothesis (by simply changing the subscripts $0$ to $C$). Plugging in $t = \E_0[e(D)] h^\invf(s / \E_0[e(D)])$ yields the bound
\begin{equation}
\label{eq:bennett_edge_bound_alternative}
\P_0\left(\E_0[e(D)] h\!\left(\left[\frac{e(D)}{\E_0[e(D)]} - 1\right]_{\!+}\right) \geq s\right)
\leq \e^{-s} \,.
\end{equation}
This result motivates the use of the statistic
\begin{equation}
\label{eq:known_probability_scan_test_statistic_part}
\statkp[D] \coloneqq \frac{\E_0[e(D)] h\left(\left[e(D) / \E_0[e(D)] - 1\right]_{+}\right)}{|D| \log(n / |D|)} \,,
\end{equation}
where the superscript \textsf{\small{}k} is used to differentiate between the setting with \emph{known} edge probabilities, and the setting with \emph{unknown} edge probabilities in the next section. Note that the statistic $\statkp[D]$ can be computed because $\E_0[e(D)]$ is a function of the known edge probabilities $p_{ij}$.

To construct our test, we simply scan over the whole graph, rejecting the null hypothesis when there exists a subgraph $D \subseteq V$ of size $|D| \leq r$ with an unusually high value for $\statkp[D]$. To be precise, fix $\epsilon > 0$, then the scan test rejects the null hypothesis when
\begin{equation}
\label{eq:known_probability_scan_test_statistic}
\statkp \coloneqq \max_{D \subseteq V, |D| \leq r} \statkp[D] \geq 1 + \frac{\epsilon}{2} \,.
\end{equation}
This test is essentially based on the number of edges $e(D)$ in subsets $D \subseteq V$ of size $1 \leq |D| \leq r$; rejecting the null hypothesis when there exists a subset $D \subseteq V$ for which the number of edges $e(D)$ becomes substantially larger than its expectation $\E_0[e(D)]$. So we are essentially looking for an \emph{overly} dense subset. Furthermore, the reason we need to scan over subsets smaller than $r$ is because of the possible inhomogeneity in our model; some edges carry little information and therefore it can be beneficial to ignore these edges and simply scan over a smaller subgraph instead.

Note that the proposed test is not computationally practical due to the very large number of sets one must consider in the scan (unless $r$ is very small). However, in this paper we are primarily interested in characterizing the statistical limits of possible tests, apart from computational considerations. See also the discussion in Section~\ref{sec:discussion}.

In order for the scan test to be powerful under the alternative we need $\E_C[e(D)] \to \infty$ for the most informative subgraph $D \subseteq C$, because otherwise there is a non-vanishing probability that $e(D)$ contains no edges under the alternative (by a standard Poisson approximation), making it impossible for the scan test to detect the planted community. This subgraph is characterized in the following definition:
\begin{definition}
\label{def:optimal_subset_argmax}
For every subgraph $C$ of size $|C| = r$, the most informative subgraph is
\begin{equation}
\label{eq:optimal_subset_argmax}
\optimalsubgraph[C] \coloneqq \argmax_{D \subseteq C} \frac{\E_0[e(D)]}{|D| \, \log(n / |D|)} \,.
\end{equation}
\end{definition}
The subgraph $\optimalsubgraph[C]$ in the definition above is essentially the densest subgraph under the null hypothesis. Using the above we can state the main result of this section, which provides conditions under which the scan test in \eqref{eq:known_probability_scan_test_statistic} is asymptotically powerful:
\begin{theorem}
\label{thm:known_probability_scan_test_powerful}
Suppose that all edge probabilities $p_{ij}$ and the community size $r$ are known. Then the scan test \eqref{eq:known_probability_scan_test_statistic} is asymptotically powerful when $r = \smallO(n)$, $\E_C[e(\optimalsubgraph[C])] \to \infty$ for all $C \subseteq V$ of size $|C| = r$, and
\begin{equation}
\label{eq:known_probability_scan_test_powerful_condition}
\max_{D \subseteq C} \frac{\E_0[e(D)] \mspace{1mu} h\bigl(\scaling[C] - 1\bigr)}{|D| \mspace{1mu} \log(n / |D|)} \geq 1 + \epsilon \,,
\end{equation}
where $\epsilon > 0$ comes from the definition of the scan test in \eqref{eq:known_probability_scan_test_statistic}.
\end{theorem}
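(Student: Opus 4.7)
The plan is to control the Type~I and Type~II errors separately.

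\textbf{Type I.} Applying Bennett's bound \eqref{eq:bennett_edge_bound_alternative} with $s = (1+\epsilon/2)\,|D|\log(n/|D|)$ gives $\P_0(\statkp[D] \geq 1+\epsilon/2) \leq (|D|/n)^{(1+\epsilon/2)|D|}$ for every $D\subseteq V$. A union bound over the $\binom{n}{d} \leq (en/d)^{d}$ subsets of each size $d\leq r$ then yields
\begin{equation*}
\P_0(\statkp \geq 1 + \epsilon/2) \leq \sum_{d=1}^{r} \bigl[e\,(d/n)^{\epsilon/2}\bigr]^{d} \leq \sum_{d=1}^{r} \bigl[e\,(r/n)^{\epsilon/2}\bigr]^{d},
\end{equation*}
which is a geometric sum whose common ratio tends to $0$ under $r = \smallO(n)$, and hence vanishes.

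\textbf{Type II.} Fix $C\subseteq V$ with $|C|=r$, write $D^\star := \optimalsubgraph[C]$ and $M := \E_0[e(D^\star)]$. Since $h(\scaling[C]-1)$ does not depend on $D$, the argmax in Definition~\ref{def:optimal_subset_argmax} also attains the maximum in \eqref{eq:known_probability_scan_test_powerful_condition}, so by hypothesis $M\,h(\scaling[C]-1) \geq (1+\epsilon)\,|D^\star|\log(n/|D^\star|)$. Setting $\alpha := (1+\epsilon/2)/(1+\epsilon) \in (0,1)$, the event $\statkp[D^\star] \geq 1+\epsilon/2$ rewrites, by monotonicity of $h$ on $[0,\infty)$, as
\begin{equation*}
(e(D^\star)/M - 1)_{+} \geq h^{\invf}\bigl((1+\epsilon/2)\,|D^\star|\log(n/|D^\star|)/M\bigr)\leq h^{\invf}\bigl(\alpha\,h(\scaling[C]-1)\bigr).
\end{equation*}
I then invoke the convexity lemma $h(\sqrt{\alpha}\,y) \geq \alpha\,h(y)$ for every $\alpha \in (0,1)$ and $y \geq 0$, which holds because $f(y):=h(\sqrt{\alpha}y)$ and $g(y):=\alpha h(y)$ share their values and first derivatives at $y=0$ while $f''(y) = \alpha/(1+\sqrt{\alpha}y) \geq \alpha/(1+y) = g''(y)$. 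This gives $h^{\invf}(\alpha h(\scaling[C]-1)) \leq \sqrt{\alpha}\,(\scaling[C]-1)$, reducing the task to showing that $e(D^\star) \geq M\bigl(1 + \sqrt{\alpha}(\scaling[C]-1)\bigr)$ with high probability under $\P_C$.

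Since $\E_C[e(D^\star)] = \scaling[C] M$ and $\mathrm{Var}_C[e(D^\star)] \leq \scaling[C] M$, the gap between this threshold and the mean is $(1-\sqrt{\alpha})(\scaling[C]-1)M$, so Chebyshev's inequality gives
\begin{equation*}
\P_C\!\bigl(e(D^\star) < M(1 + \sqrt{\alpha}(\scaling[C]-1))\bigr) \leq \frac{1}{(1-\sqrt{\alpha})^{2}}\cdot\frac{\scaling[C]}{(\scaling[C]-1)^{2}\,M}.
\end{equation*}
It remains to verify $M(\scaling[C]-1)^{2}/\scaling[C] \to \infty$ uniformly over $C$, which I handle by splitting on the magnitude of $\scaling[C]$. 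Using that $|D^\star|\log(n/|D^\star|) \geq \log n$ (the function $d\log(n/d)$ is increasing on $[1, n/e]$ and $r = \smallO(n)$), the hypothesis forces $M\,h(\scaling[C]-1)\to\infty$ uniformly. For $\scaling[C] \leq 2$, we have $h(\scaling[C]-1)\asymp (\scaling[C]-1)^{2}$ and $(\scaling[C]-1)^{2}/\scaling[C] \geq (\scaling[C]-1)^{2}/2$, so $M(\scaling[C]-1)^2/\scaling[C] \gtrsim M h(\scaling[C]-1) \to \infty$; for $\scaling[C] \geq 2$, $(\scaling[C]-1)^{2}/\scaling[C] \geq \scaling[C]/4$, so $M(\scaling[C]-1)^{2}/\scaling[C] \geq \E_C[e(D^\star)]/4 \to \infty$ by the standing assumption. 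The main technical hurdle is the convexity lemma, which neatly converts the nonlinear comparison built into $\statkp[D^\star]$ into a clean linear concentration problem and sidesteps the wildly varying local behaviour of $h$ (quadratic near $0$, linear-log at infinity).
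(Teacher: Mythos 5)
Your Type~I argument is identical to the paper's (Bennett plus a size-stratified union bound), so no comment there. Your Type~II argument, however, takes a genuinely different and arguably cleaner route, and it is correct. The paper starts from the convexity (tangent-line) inequality $h(x-1)\geq h(\scaling[C]-1)+(x-\scaling[C])\log\scaling[C]$ applied at $x=e(\optimalsubgraph)/\E_0[e(\optimalsubgraph)]$, reducing the problem to showing that $\bigl(e(\optimalsubgraph)-\E_C[e(\optimalsubgraph)]\bigr)\log\scaling[C]$ is negligible relative to $\E_0[e(\optimalsubgraph)]\,h(\scaling[C]-1)$, which it then verifies via Chebyshev together with a three-case subsubsequence argument on the asymptotics of $\scaling[C]$ (Remark~\ref{rem:subsubsequence}). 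You instead exploit a different convexity fact, $h(\sqrt{\alpha}\,y)\geq \alpha\,h(y)$ for $\alpha\in(0,1)$ (correctly proved by comparing $f(0),f'(0),f''$ with $g(0),g'(0),g''$), which shows directly that the rejection threshold on $e(\optimalsubgraph)$ sits a \emph{fixed} multiplicative fraction $(1-\sqrt{\alpha})(\scaling[C]-1)\E_0[e(\optimalsubgraph)]$ below the alternative mean $\scaling[C]\,\E_0[e(\optimalsubgraph)]$. This converts the nonlinear test statistic into a clean linear deviation problem in one step, so Chebyshev applies immediately and the case analysis collapses to a two-way split ($\scaling[C]\leq 2$ vs.\ $\scaling[C]\geq 2$) that needs no subsubsequence device. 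The verification that $\E_0[e(\optimalsubgraph)](\scaling[C]-1)^2/\scaling[C]\to\infty$ is handled correctly: in the small-$\scaling[C]$ regime you use $h(x)\leq x^2/2$ together with the uniform lower bound $|\optimalsubgraph|\log(n/|\optimalsubgraph|)\geq\log n$, and in the large-$\scaling[C]$ regime you fall back on the standing assumption $\E_C[e(\optimalsubgraph[C])]\to\infty$. The only cosmetic issue is the mixed-direction display (``$\geq \cdots \leq \cdots$'') when introducing the threshold comparison, which would be clearer as two separate statements; the logic is nonetheless sound. Net effect: same conclusion, same Type~I argument, but a tighter and more self-contained Type~II argument that avoids the paper's piecewise asymptotic analysis of $h$.
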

This result is more widely applicable than the lower bound from Theorem~\ref{thm:lower_bound}. The condition $\E_C[e(\optimalsubgraph[C])] \to \infty$ is less stringent than either Assumption \ref{ass:lower_bound_maximum_inhomogeneity} or \ref{ass:lower_bound_maximum_community_small}. Also, there is no need for a condition like Assumption~\ref{ass:lower_bound_sparsity}. This is because we can use the upper bound from Bennett's inequality and therefore do not need the Poisson approximations necessary in deriving the lower bounds. To make this precise and to make the result in Theorem~\ref{thm:known_probability_scan_test_powerful} directly comparable to Theorem~\ref{thm:lower_bound} we provide the following corollary:%
\begin{corollary}
\label{cor:known_probability_scan_test_powerful}
Suppose that all edge probabilities $p_{ij}$ and the community size $r$ are known, and that either Assumption~\ref{ass:lower_bound_maximum_inhomogeneity} or \ref{ass:lower_bound_maximum_community_small} holds. Then the scan test in \eqref{eq:known_probability_scan_test_statistic} is asymptotically powerful when for all $C \subseteq V$ of size $|C| = r$,
\begin{equation}
\label{eq:known_probability_scan_test_powerful_condition_corollary}
\max_{D \subseteq C} \frac{\E_0[e(D)] \mspace{1mu} h\bigl(\scaling[C] - 1\bigr)}{|D| \mspace{1mu} \log(n / |D|)} \geq 1 + \epsilon \,,
\end{equation}
where $\epsilon > 0$ comes from the definition of the scan test in \eqref{eq:known_probability_scan_test_statistic}.
\end{corollary}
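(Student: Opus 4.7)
The plan is to reduce to Theorem~\ref{thm:known_probability_scan_test_powerful}, whose hypotheses beyond~\eqref{eq:known_probability_scan_test_powerful_condition_corollary} are $r = \smallO(n)$ and $\E_C[e(\optimalsubgraph[C])] \to \infty$ for every $C$ of size $r$. The former is immediate, since Assumption~\ref{ass:lower_bound_maximum_inhomogeneity}\ref{ass:lbl:lower_bound_maximum_inhomogeneity_size} gives $r = \bigO(n^{1/2-\delta})$ and Assumption~\ref{ass:lower_bound_maximum_community_small}\ref{ass:lbl:lower_bound_maximum_community_small_size} gives $r = n^{\smallO(1)}$, both of which are $\smallO(n)$. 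The real work is verifying $\E_C[e(\optimalsubgraph[C])] \to \infty$, and this splits naturally into two cases depending on which assumption is in force.

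Under Assumption~\ref{ass:lower_bound_maximum_inhomogeneity} the scan-test condition~\eqref{eq:known_probability_scan_test_powerful_condition_corollary} is not needed. By definition of $\optimalsubgraph[C]$ as argmax, comparing against the choice $D = C$ gives
\[
\frac{\E_0[e(\optimalsubgraph[C])]}{|\optimalsubgraph[C]| \log(n/|\optimalsubgraph[C]|)} \;\geq\; \frac{\E_0[e(C)]}{r \log(n/r)} \;=\; \frac{\overline{p}_C (r-1)/2}{\log(n/r)},
\]
and the right-hand side tends to infinity by~\ref{ass:lbl:lower_bound_maximum_inhomogeneity_sparsity}. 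Since $k \mapsto k \log(n/k)$ is increasing on $[1, n/e]$ and eventually $|\optimalsubgraph[C]| \leq r \leq n/e$, one has $|\optimalsubgraph[C]| \log(n/|\optimalsubgraph[C]|) \geq \log n$, whence $\E_0[e(\optimalsubgraph[C])] \to \infty$ and, using $\scaling[C] \geq 1$, the same holds for $\E_C[e(\optimalsubgraph[C])]$.

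Under Assumption~\ref{ass:lower_bound_maximum_community_small} the density lower bound~\ref{ass:lbl:lower_bound_maximum_community_small_sparsity} is too weak to drive $\E_0[e(\optimalsubgraph[C])]$ to infinity on its own, so the argument must exploit~\eqref{eq:known_probability_scan_test_powerful_condition_corollary}. Since $h(\scaling[C]-1)$ is independent of $D$, the maximum in~\eqref{eq:known_probability_scan_test_powerful_condition_corollary} is attained at $\optimalsubgraph[C]$, yielding
\[
\E_C[e(\optimalsubgraph[C])] \;=\; \scaling[C]\, \E_0[e(\optimalsubgraph[C])] \;\geq\; (1+\epsilon)\, \frac{\scaling[C]\, |\optimalsubgraph[C]| \log(n/|\optimalsubgraph[C]|)}{h(\scaling[C]-1)}.
\]
The crux is to control $\scaling[C]/h(\scaling[C]-1)$, which can a priori be small when $\scaling[C]$ is large. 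I would use the elementary bound $h(s) \leq (s+1)\log(s+1)$ for $s \geq 0$ (immediate from the definition of $h$), which at $s = \scaling[C]-1$ gives $h(\scaling[C]-1) \leq \scaling[C]\log\scaling[C]$ and hence $\scaling[C]/h(\scaling[C]-1) \geq 1/\log \scaling[C]$. The implicit constraint $\scaling[C] p_{ij} \leq 1$ together with $\overline{p}_C \leq \max_{i,j \in C} p_{ij}$ then forces $\log \scaling[C] \leq \log(1/\overline{p}_C)$. Plugging in $|\optimalsubgraph[C]|\log(n/|\optimalsubgraph[C]|) \geq \log n$ and invoking~\ref{ass:lbl:lower_bound_maximum_community_small_sparsity}, together with $\log(n/r) \asymp \log n$ (from $r = n^{\smallO(1)}$), produces a lower bound on $\E_C[e(\optimalsubgraph[C])]$ that grows strictly faster than $\log r$, and hence diverges since $r \to \infty$.

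The main obstacle is Case~B: the looseness of~\ref{ass:lbl:lower_bound_maximum_community_small_sparsity} allows $\overline{p}_C$ to be very small, so $\E_0[e(\optimalsubgraph[C])]$ need not diverge on its own, and one cannot avoid reasoning about a potentially large $\scaling[C]$. The balance between $\scaling[C]$ and $h(\scaling[C]-1)$ provided by the elementary logarithmic bound must then be coupled back to $\overline{p}_C$ through the probability-validity constraint $\scaling[C] p_{ij} \leq 1$ in order for~\ref{ass:lbl:lower_bound_maximum_community_small_sparsity} to produce the desired divergence.
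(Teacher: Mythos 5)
Your proposal is correct and follows the same overall reduction as the paper: verify $r = \smallO(n)$ and $\E_C[e(\optimalsubgraph[C])] \to \infty$ and then invoke Theorem~\ref{thm:known_probability_scan_test_powerful}, splitting according to which of Assumptions \ref{ass:lower_bound_maximum_inhomogeneity} or \ref{ass:lower_bound_maximum_community_small} holds. The internal details, however, are genuinely cleaner than the paper's. In the Assumption~\ref{ass:lower_bound_maximum_inhomogeneity} case the paper still routes through \eqref{eq:known_probability_scan_test_powerful_condition_corollary}, the asymptotics $h(x-1)\asymp x\log x$, the claim $|\optimalsubgraph[C]|\to\infty$, and the bound $\scaling[C]\le 1/\overline{p}_C\le r\le\sqrt{n}$; you bypass all of this by noting $\E_C[e(\optimalsubgraph[C])]=\scaling[C]\,\E_0[e(\optimalsubgraph[C])]\ge\E_0[e(\optimalsubgraph[C])]$ and getting divergence of $\E_0[e(\optimalsubgraph[C])]$ directly from the argmax comparison with $D=C$, Assumption~\ref{ass:lower_bound_maximum_inhomogeneity}~\ref{ass:lbl:lower_bound_maximum_inhomogeneity_sparsity}, and the monotonicity of $k\mapsto k\log(n/k)$, without using the scan-test condition at all. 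In the Assumption~\ref{ass:lower_bound_maximum_community_small} case the paper first disposes of $\scaling[C]=\bigO(1)$ and then uses $h(x-1)\asymp x\log x$ for $\scaling[C]\to\infty$, which implicitly relies on the subsequence argument of Remark~\ref{rem:subsubsequence}; your uniform elementary bound $h(s)\le(s+1)\log(s+1)$, giving $\scaling[C]/h(\scaling[C]-1)\ge 1/\log\scaling[C]$ for every $\scaling[C]>1$, removes that dichotomy entirely, and the remaining chain ($\scaling[C]\overline{p}_C\le 1$, $\overline{p}_C\le\max_{i,j\in C}p_{ij}$, then Assumption~\ref{ass:lower_bound_maximum_community_small}~\ref{ass:lbl:lower_bound_maximum_community_small_sparsity} and $|\optimalsubgraph[C]|\log(n/|\optimalsubgraph[C]|)\ge\log n$) matches the paper's Case~2 and correctly yields a bound of order $\omega(\log r)\to\infty$. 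What the paper's asymptotic route buys is a slightly sharper constant in the large-$\scaling[C]$ regime (an equality up to $1+\smallO(1)$ rather than a one-sided inequality), but that sharpness is not needed for the corollary, so your argument is a valid and arguably tidier alternative.
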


To show that Theorem~\ref{thm:known_probability_scan_test_powerful} applies in a broader setting than the lower bound from Theorem~\ref{thm:lower_bound} we also provide the following corollary. This shows that the scan test \eqref{eq:known_probability_scan_test_statistic} is able to detect large communities (of size larger than $\sqrt{n}$), even when the edge probabilities are very small and highly inhomogeneous:
\begin{corollary}
\label{cor:known_probability_scan_test_powerful2}
Suppose that all edge probabilities $p_{ij}$ and the community size $r$ are known. Define $\pmax \coloneqq \max_{i,j \in V} p_{ij}$ and $\pmin \coloneqq \min_{i\neq j \in V} p_{ij}$. If $r \geq n^a$, $\pmin \geq n^{-2b}$, and $\pmax / \pmin = \smallO(n^{a - b})$ for $0 < b < a < 1$, then the scan test in \eqref{eq:known_probability_scan_test_statistic} is asymptotically powerful when for all $C \subseteq V$ of size $|C| = r$,
\begin{equation}
\max_{D \subseteq C} \frac{\E_0[e(D)] \mspace{1mu} h\bigl(\scaling[C] - 1\bigr)}{|D| \mspace{1mu} \log(n / |D|)} \geq 1 + \epsilon \,,
\end{equation}
where $\epsilon > 0$ comes from the definition of the scan test in \eqref{eq:known_probability_scan_test_statistic}.
\end{corollary}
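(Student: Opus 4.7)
The plan is to verify the hypotheses of Theorem~\ref{thm:known_probability_scan_test_powerful}. The displayed condition of the corollary coincides with \eqref{eq:known_probability_scan_test_powerful_condition}, and $r = \smallO(n)$ is a standing background assumption throughout the paper, so the only nontrivial thing to check is that $\E_C[e(\optimalsubgraph[C])] \to \infty$ uniformly over $C \subseteq V$ of size $r$. Since $\scaling[C] \geq 1$, this reduces to proving $\E_0[e(\optimalsubgraph[C])] \to \infty$, and it is in this step that the parameter assumptions on $r$, $\pmin$, and $\pmax/\pmin$ come into play.

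The main obstacle is that $\optimalsubgraph[C]$ is defined as an argmax and could, a priori, be a very small subset, in which case $\E_0[e(\optimalsubgraph[C])]$ might not diverge. The key step is to exhibit a lower bound on $|\optimalsubgraph[C]|$ using the inhomogeneity control $\pmax/\pmin = \smallO(n^{a-b})$. Taking $D = C$ as a feasible candidate in \eqref{eq:optimal_subset_argmax} and using $\E_0[e(C)] \geq r(r-1)\pmin$ gives
\begin{equation*}
\frac{\E_0[e(\optimalsubgraph[C])]}{|\optimalsubgraph[C]|\log(n/|\optimalsubgraph[C]|)} \;\geq\; \frac{(r-1)\pmin}{\log(n/r)}.
\end{equation*}
Combining this with the trivial upper bound $\E_0[e(\optimalsubgraph[C])] \leq |\optimalsubgraph[C]|(|\optimalsubgraph[C]|-1)\pmax$, and noting that $|\optimalsubgraph[C]| \leq r$ implies $\log(n/|\optimalsubgraph[C]|) \geq \log(n/r)$, yields $|\optimalsubgraph[C]| \geq 1 + (r-1)\pmin/\pmax$.

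Writing $\pmax/\pmin = \epsilon_n n^{a-b}$ with $\epsilon_n = \smallO(1)$, this gives $|\optimalsubgraph[C]| \gtrsim n^b/\epsilon_n$. Plugging into the elementary bound $\E_0[e(\optimalsubgraph[C])] \geq |\optimalsubgraph[C]|(|\optimalsubgraph[C]|-1)\pmin$ and using $\pmin \geq n^{-2b}$, one obtains
\begin{equation*}
\E_0[e(\optimalsubgraph[C])] \;\gtrsim\; (n^b/\epsilon_n)^2 \cdot n^{-2b} \;=\; 1/\epsilon_n^2 \;\to\; \infty .
\end{equation*}
This convergence is uniform in $C$, and the corollary then follows by invoking Theorem~\ref{thm:known_probability_scan_test_powerful}.
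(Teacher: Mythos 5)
Your proof is correct and follows essentially the same strategy as the paper: compare $\optimalsubgraph[C]$ against the candidate $D=C$ to extract a lower bound on $|\optimalsubgraph[C]|$ from the inhomogeneity control, then lower-bound $\E_0[e(\optimalsubgraph[C])]$ via $\pmin$ and invoke Theorem~\ref{thm:known_probability_scan_test_powerful}. The paper packages the first step as a proof by contradiction with the threshold $n^b\sqrt{x_n}$ (writing $\pmax/\pmin = n^{a-b}/x_n$), while your direct rearrangement gives the sharper intermediate bound $|\optimalsubgraph[C]| \geq 1 + (r-1)\pmin/\pmax \gtrsim n^b x_n$; both are amply sufficient. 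A small inaccuracy: since $\E_0[e(D)] = \sum_{i<j\in D} p_{ij}$, your stated bounds $\E_0[e(C)] \geq r(r-1)\pmin$ and $\E_0[e(\optimalsubgraph[C])] \leq |\optimalsubgraph[C]|(|\optimalsubgraph[C]|-1)\pmax$ are each missing the factor $\tfrac{1}{2}$ from $\binom{\cdot}{2}$, but these cancel in the ratio and are absorbed by the final $\gtrsim$, so the argument stands.
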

In the corollary above, both $a$ and $b$ above may depend on the graph size $n$. In particular, if $\pmax / \pmin = \bigO(1)$ then it is possible that $a - b = \smallO(1)$, provided that $(a - b)\log(n) \to \infty$. For instance, it is necessary to have $a - b = \smallO(1)$ in order to satisfy Assumption~\ref{ass:lower_bound_maximum_inhomogeneity}~\ref{ass:lbl:lower_bound_maximum_inhomogeneity_inhomogeneity}.

A downside of the scan test presented in this section is that it requires knowledge of all edge probabilities $p_{ij}$. In practice, these are often unavailable to a statistician. The next section is devoted to extending the scan test to cope with unknown edge probabilities, assuming that the edge probabilities have a rank-1 structure.

%%%%%%%%%%%%%%%%%%%%%%%%%%%%%%%%%%%%%%%%%%%%%%%%%%%%%%%%%%%%%%%%%%%%%%%%%%%%%%%%
%%%%%%%%%%%%%%%%%%%%%%%%%%%%%%%%%%%%%%%%%%%%%%%%%%%%%%%%%%%%%%%%%%%%%%%%%%%%%%%%
\subsection{Scan test for unknown rank-1 edge probabilities}
\label{subsec:scan_test_for_unknown_rank1_edge_probabilities}
In this section we show how the scan test from the previous section can be extended to the setting where the edge probabilities $p_{ij}$ are unknown. We do still assume that the community size $|C| = r$ is known. As can be seen in \eqref{eq:known_probability_scan_test_statistic_part}, the scan statistic depends on the edge probabilities $p_{ij}$ only through $\E_0[e(D)] = \sum_{i<j \in D} p_{ij}$. Therefore, a natural way to approach the situation where the edge probabilities $p_{ij}$ are unknown is to devise a good surrogate for $\E_0[e(D)]$ that can be computed solely based on the observed graph (which could be a sample from either the null hypothesis or alternative hypothesis). Clearly, this is not possible in full generality, but if the edge probabilities have some additional structure then this become possible.

Here we consider the scenario where, under the null hypothesis, the edge probabilities $p_{ij}$ have a so-called rank-1 structure. The resulting model is sometimes also called a hidden-variable model. That is, we assume that each vertex $i \in V$ is assigned a weight $\w_i \in (0, 1)$ and that the edge probabilities are given by $p_{ij} = \w_i \w_j$. This is probably one of the simplest models for inhomogeneous random graphs possible. Note that this model is very similar to the degree corrected stochastic block model \cite{Karrer2011,Gulikers2017,Gao2018a}, except that our focus is on the detection of small communities, whereas the literature on stochastic block models is typically concerned with the detection of much larger communities. Further, there are strong connections between this model and the configuration model \cite{Britton2006,VanderHofstad2017}.

To make it possible to estimate $\E_0[e(D)]$ we need to assume that the graph is not too inhomogeneous and not too sparse, as formulated in the following assumption:
\begin{assumption}
\label{ass:unknown_probability_maximum_inhomogeneity}
Let $\wmax = \max_{i \in V} \w_i$ and $\wmin = \min_{i \in V} \w_i$, then the maximum allowed inhomogeneity is 
\begin{equation}
\left(\frac{\wmax}{\wmin}\right)^{\!2} = \smallO\left(r^{2/3} \medwedge \frac{n}{r} \mspace{1mu} \wmin^2\right) \,.
\end{equation}
\end{assumption}
Using the above assumption, we will show that it is possible to estimate $\E_0[e(D)]$ by using the observed edges going from $D$ to the rest of the graph $-D = V \setminus D$. Note that the exponent $2 / 3$ in Assumption~\ref{ass:unknown_probability_maximum_inhomogeneity} is not an arbitrary choice, but as we explain below, it is actually the best possible exponent that still ensures that our estimator works.

When $C \subseteq V$ is the planted community, and we estimate $\E_0[e(D)]$ for a large enough subgraph $D \subseteq C$ using this approach, we will obtain an almost unbiased estimate both under $H_0$ as well as under $H_1$. This is because enough of the edges used in this estimate have the same distribution under the null and alternative hypothesis. Our estimator is based on the identity
\begin{equation}
\label{eq:edge_count_identity}
\E_0[e(D)]
  = \frac{\!\biggl(\raisebox{-1pt}{$\!\sqrt{\E_0[e(V)] \mspace{-2mu}+\mspace{-2mu} \frac{1}{2} \rule{0pt}{10pt}\smash{\raisebox{2.3pt}{\scalebox{0.8}{$\displaystyle\sum_{i \in V}\w_i^2$}}}} - \sqrt{\E_0[e(V)] \mspace{-2mu}+\mspace{-2mu} \frac{1}{2} \rule{0pt}{10pt}\smash{\raisebox{2.3pt}{\scalebox{0.8}{$\displaystyle\sum_{i \in V}\w_i^2$}}} \mspace{-2mu}-\mspace{-2mu} 2 \E_0[e(D,-D)]}$}\biggr)^{\!2}\!}{4} - \frac{1}{2} \raisebox{2pt}{\scalebox{0.9}{$\displaystyle\sum_{i \in D}\w_i^2$}} \,.
\end{equation}
This identity is explained in more detail in Section~\ref{subsec:derivation_edge_count_identity}, and it is valid when Assumption~\ref{ass:unknown_probability_maximum_inhomogeneity} holds and $n$ is large enough. Note that both $\E_0[e(V)]$ and $\E_0[e(D,-D)]$ are the sum of a large number of edge probabilities $p_{ij} = \w_i \w_j$, and most of these remain unaffected under the alternative hypothesis. Because of this, and since $\sum_{i\in V} \theta_i^2$ will generally be negligible, we will estimate $\E_0[e(D)]$ by
\begin{equation}
\label{eq:edge_count_estimator}
\est[D] \coloneqq \frac{\left(\sqrt{e(V)} - \sqrt{e(V) - 2 e(D,-D)}\right)^2}{4} \,.
\end{equation}
Here we have used that $(\wmax / \wmin)^2 \leq r^{2/3}$ by Assumption~\ref{ass:unknown_probability_maximum_inhomogeneity}, which ensures that the term $\sum_{i \in D} \w_i^2 / 2$ in \eqref{eq:edge_count_identity} becomes negligible, and therefore that our estimator $\est[C]$ is a good surrogate for $\E_0[e(D)]$. This also explains the exponent $2/3$ appearing in Assumption~\ref{ass:unknown_probability_maximum_inhomogeneity}, as this is the largest exponent that still guarantees that the term $\sum_{i \in D} \w_i^2 / 2$ is negligible. This is discussed in more detail in Section~\ref{subsec:proof_of_scan_test_for_unknown_rank1_edge_probabilities_is_powerful}.%\kay{See \eqref{eq:maximum_inhomogeneity_per_subset} in the proof of Lemma~\ref{lem:estimator_taylor_expansion}.}.

In most cases, the estimator in \eqref{eq:edge_count_estimator} can essentially be used as a plugin for the scan test of the previous section. However, this estimator might not concentrate very well when $\E_0[e(D)]$ becomes too small. To remedy this, we use a thresholded version of the estimator given by
\begin{equation}
\label{eq:edge_count_estimator_threshold}
\esttr[D] \coloneqq \biggl(\est[D] \medvee \frac{|D|^2}{n} \log^{4}(n / |D|)\biggr) \,.
\end{equation}
Using the thresholded estimator in \eqref{eq:edge_count_estimator_threshold}, we can consider the same scan test as in the previous section but with $\E_0[e(D)]$ replaced by the estimator $\esttr[D]$. This leads to the definition of the scan test for unknown edge probabilities as
\begin{equation}
\label{eq:unknown_probability_scan_test_statistic_part}
\statup[D] \coloneqq \frac{\esttr[D] \mspace{1mu} h\Bigl(\bigl[e(D) / \esttr[D] - 1\bigr]_{+}\Bigr)}{|D| \mspace{1mu} \log(n / |D|)} \,,
\end{equation}
where the superscript \textsf{\small{}u} is used to indicate that we consider the setting with \emph{unknown} rank-1 edge probabilities.

As in the previous section, we scan over subgraphs and reject the null hypothesis when $\statup[D]$ becomes too large. However, as explained above, when scanning over subgraphs $D \subseteq V$ whose size $|D|$ is much smaller than $|C| = r$ we run into a problem because of the bias in $\esttr[D]$. Luckily this is not a problem because Assumption~\ref{ass:unknown_probability_maximum_inhomogeneity} ensures that asymptotically the maximum of $\statup[D]$ will always be attained at a subgraph of size $|D| \geq r^{1/3}$, see the proof of Lemma~\ref{lem:optimal_subset_size} in Section~\ref{subsec:proof_of_scan_test_for_unknown_rank1_edge_probabilities_is_powerful}. Therefore, for $\epsilon > 0$ fixed, the scan test for unknown edge probabilities rejects the null hypothesis when
\begin{equation}
\label{eq:unknown_probability_scan_test_statistic}
\statup \coloneqq \max_{D \subseteq V,\, r^{1/3} \leq |D| \leq r} \statup[D] \geq 1 + \frac{\epsilon}{3} \,.
\end{equation}

This brings us to the main result of this section, which provides conditions for the scan test in \eqref{eq:unknown_probability_scan_test_statistic} to be asymptotically powerful:
\begin{theorem}
\label{thm:unknown_probability_scan_test_powerful}
Suppose that the community size $r$ is known and that Assumption~\ref{ass:unknown_probability_maximum_inhomogeneity} holds. Then the scan test \eqref{eq:unknown_probability_scan_test_statistic} is asymptotically powerful when $r = \smallO(n)$, $\E_C[e(\optimalsubgraph[C])] \to \infty$ for all $C \subseteq V$ of size $|C| = r$, and
\begin{equation}
\label{eq:unknown_probability_scan_test_powerful_condition}
\max_{D \subseteq C} \frac{\E_0[e(D)] \mspace{1mu} h\bigl(\scaling[C] - 1\bigr)}{|D| \mspace{1mu} \log(n / |D|)} \geq 1 + \epsilon \,,
\end{equation}
where $\epsilon > 0$ comes from the definition of the scan test in \eqref{eq:unknown_probability_scan_test_statistic}.
\end{theorem}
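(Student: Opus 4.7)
The plan is to transplant the proof of Theorem~\ref{thm:known_probability_scan_test_powerful} by showing that the random surrogate $\esttr[D]$ is an accurate stand-in for the deterministic $\E_0[e(D)]$, uniformly over the scan range, both under $H_0$ and under $H_1$. Once this uniform approximation is in hand, $\statup[D]$ differs from $\statkp[D]$ by a multiplicative factor $1 + \smallOp(1)$, and the Type~I/Type~II analyses underlying Theorem~\ref{thm:known_probability_scan_test_powerful} go through with the threshold $1 + \epsilon/3$ in place of $1 + \epsilon/2$ absorbing this discrepancy. An auxiliary ``optimal subset size'' lemma (the one alluded to in the text) would certify that the restriction $|D| \geq r^{1/3}$ in \eqref{eq:unknown_probability_scan_test_statistic} excludes no subgraph that matters for the signal: the maximum in \eqref{eq:unknown_probability_scan_test_powerful_condition} is attained for some $D \subseteq C$ with $|D| \geq r^{1/3}$.

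The central step is the uniform concentration
\[
\esttr[D] = \E_0[e(D)]\,(1 + \smallOp(1)) \qquad \text{for all } D \text{ with } r^{1/3} \leq |D| \leq r,
\]
both under $\P_0$ and under $\P_C$ for every $C$ with $|C| = r$. Starting from identity \eqref{eq:edge_count_identity}, one writes $\E_0[e(D)]$ as a smooth function of $\E_0[e(V)]$, $\E_0[e(D,-D)]$, and the deterministic correction $\tfrac{1}{2}\sum_{i \in V}\w_i^2$. Under $H_0$, Bernstein's inequality gives multiplicative concentration of $e(V)$ and $e(D,-D)$ about their means, uniformly over the at most $\binom{n}{|D|}$ choices of $D$, because $\E_0[e(D,-D)] \gtrsim |D|\,n\,\wmin^2$ grows fast enough by Assumption~\ref{ass:unknown_probability_maximum_inhomogeneity}. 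Taylor-expanding the two square roots in \eqref{eq:edge_count_identity} then yields $\est[D] = \E_0[e(D)] + \tfrac{1}{2}\sum_{i \in V}\w_i^2 - \tfrac{1}{2}\sum_{i \in D}\w_i^2 + \smallOp(\E_0[e(D)])$, and the two deterministic correction terms are $\bigO(n\,\wmax^2)$ and $\bigO(|D|\,\wmax^2)$; Assumption~\ref{ass:unknown_probability_maximum_inhomogeneity} together with $|D| \geq r^{1/3}$ renders both negligible relative to $\E_0[e(D)] \gtrsim |D|^2\,\wmin^2$. The floor in \eqref{eq:edge_count_estimator_threshold} handles the complementary regime where $\E_0[e(D)]$ itself is tiny. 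Under $H_1$, the only laws that change are those of the edges inside $C$: these contribute at most $\binom{r}{2}$ extra edges to $e(V)$ and at most $|D|\,r$ extra edges to $e(D,-D)$, both of lower order because $r = \smallO(n)$, so the same concentration of $\esttr[D]$ around $\E_0[e(D)]$ (and crucially not around $\E_C[e(D)]$) persists under $\P_C$, which is exactly the correct calibration for the statistic $\statup[D]$.

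With the concentration in hand, Type~I control follows from Bennett's inequality \eqref{eq:bennett_edge_bound_alternative} applied on the high-probability event $\esttr[D] \asymp \E_0[e(D)]$, combined with a union bound over the $\sum_{k \leq r} \binom{n}{k} \leq (\mathrm{e}\,n/r)^{\,r}$ subsets, mirroring the calculation behind Theorem~\ref{thm:known_probability_scan_test_powerful}. For Type~II, let $D$ be the subgraph of $C$ attaining the maximum in \eqref{eq:unknown_probability_scan_test_powerful_condition}; the optimal-subset-size lemma lets us assume $|D| \geq r^{1/3}$. The hypothesis $\E_C[e(\optimalsubgraph[C])] \to \infty$ yields $e(D) = \scaling[C]\,\E_0[e(D)]\,(1 + \smallOp(1))$, which combined with $\esttr[D] = \E_0[e(D)](1+\smallOp(1))$ and the continuity of $h$ gives $\statup[D] \geq (1 - \smallOp(1))(1 + \epsilon) \geq 1 + \epsilon/3$ with high probability. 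The hardest step is the uniform multiplicative-accuracy concentration of $\esttr[D]$: it must survive both an exponentially large union bound over subsets and the change of measure to $\P_C$ with no loss of constant. The exponent $2/3$ in Assumption~\ref{ass:unknown_probability_maximum_inhomogeneity} is exactly what makes the deterministic bias $\tfrac{1}{2}\sum_{i\in D}\w_i^2$ vanish precisely in the regime $|D| \geq r^{1/3}$, so the scan restriction in \eqref{eq:unknown_probability_scan_test_statistic} and the concentration of the estimator are tightly coupled; the companion optimal-subset-size lemma, based on the same inhomogeneity bound, ensures no signal is lost by this restriction.
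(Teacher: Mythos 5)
Your overall architecture matches the paper: establish concentration of $\esttr[D]$ around $\E_0[e(D)]$, transplant the Type-I/Type-II analyses from Theorem~\ref{thm:known_probability_scan_test_powerful}, and use an ``optimal subset size'' lemma ($|\optimalsubgraph[C]|\ge r^{1/3}$) to justify the scan range. You also correctly identify why the exponent $2/3$ in Assumption~\ref{ass:unknown_probability_maximum_inhomogeneity} is the right one. But your central claim — uniform two-sided concentration $\esttr[D]=\E_0[e(D)](1+\smallOp(1))$ over all $D$ with $r^{1/3}\le|D|\le r$, under both $\P_0$ and $\P_C$ — is false, and the way you recover from the floor regime is where the gap sits. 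When the floor $\tfrac{|D|^2}{n}\log^4(n/|D|)$ is binding, $\esttr[D]$ exceeds $\E_0[e(D)]$, possibly by an arbitrarily large factor; there is no ``$1+\smallOp(1)$'' to be had there, nor is that the intent of the truncation. The paper therefore only proves the one-sided bound $\max_D \E_0[e(D)]/\esttr[D]\le 1+\smallOp[\P_0](1)$ for Type-I (which suffices because $a\,h([x/a-1]_+)$ is decreasing in $a$, so an overshooting estimator can only shrink the statistic), and separately addresses the possibility that the floor binds at $\optimalsubgraph[C]$ under $\P_C$. That second case is not a corner case you can absorb with ``the floor handles it'': if $\esttr[\optimalsubgraph]$ is the floor, your Type-II inequality $\statup[\optimalsubgraph]\ge(1-\smallOp(1))(1+\epsilon)$ no longer follows from $\esttr\asymp\E_0[e(\optimalsubgraph)]$ and $h$-continuity, because the denominator of the statistic has jumped. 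The paper's Case~2 instead observes that $\E_0[e(\optimalsubgraph)]$ being below the floor combined with the signal condition forces $\scaling[C]\ge(1+\smallO(1))\sqrt{n/|\optimalsubgraph|}$, and then bounds $\statup[\optimalsubgraph]$ directly using $h(x-1)\asymp x\log x$ and the fact that $\log(\scaling[C]\cdot\E_0[e(\optimalsubgraph)]/\esttr[\optimalsubgraph])\asymp\log(\scaling[C])$. Without this case, your Type-II proof is incomplete.

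A second, related gap: to push Bennett's inequality through the union bound over $\sum_k\binom{n}{k}$ subsets, you need the deviations of $e(D,-D)$ to be negligible relative to $\E_0[e(D,-D)]$, which requires $\E_0[e(D,-D)]\gg|D|\log(n/|D|)$. Your proposed lower bound $\E_0[e(D,-D)]\gtrsim|D|\,n\,\wmin^2$ does not deliver this in general, since Assumption~\ref{ass:unknown_probability_maximum_inhomogeneity} only guarantees $n\wmin^2=\omega(r)$, which can grow much more slowly than $\log n$ when $r$ is small. The paper obtains the needed rate precisely by restricting the uniform concentration argument to the random event $\esttr[D]\le\E_0[e(D)]$; on that event the floor is non-binding, which implies $\sum_{i\in D}\w_i\gtrsim(|D|/\sqrt{n})\log^2(n/|D|)$ and hence $\E_0[e(D,-D)]\gtrsim|D|\log^2(n/|D|)$. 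This restriction is another way the floor is load-bearing in the argument, not merely a ``handling'' device that preserves unconditional two-sided accuracy. Finally, a minor point: the negligibility of $\sum_{i\in D}\w_i^2$ relative to $\E_0[e(D)]$ does not follow from the crude bound $|D|\wmax^2$ vs.\ $|D|^2\wmin^2$ (that only yields $(\wmax/\wmin)^2/|D|$, which is not $\smallO(1)$ under Assumption~\ref{ass:unknown_probability_maximum_inhomogeneity}); the paper uses the sharper extremal-configuration inequality $\sum_D\w_i^2/(\sum_D\w_i)^2\le\frac{1}{|D|}\frac{\wmax}{\wmin}$, whose first-power ratio is what makes the $2/3$ exponent tight.
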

Comparing this result with Theorem~\ref{thm:known_probability_scan_test_powerful}, we see that for rank-1 random graphs, Assumption~\ref{ass:unknown_probability_maximum_inhomogeneity} is the only extra condition necessary when the edge probabilities are unknown. Furthermore, by the same argument as in the previous section it can be shown that either Assumption \ref{ass:lower_bound_maximum_inhomogeneity} or \ref{ass:lower_bound_maximum_community_small} is sufficient to ensure that $\E_C[e(\optimalsubgraph[C])] \to \infty$. Therefore, to make the result in Theorem~\ref{thm:unknown_probability_scan_test_powerful} directly comparable to Theorem~\ref{thm:lower_bound} we provide the following corollary:
\begin{corollary}
\label{cor:unknown_probability_scan_test_powerful}
Suppose that the community size $r$ is known and that Assumption~\ref{ass:unknown_probability_maximum_inhomogeneity}, and either Assumption~\ref{ass:lower_bound_maximum_inhomogeneity} or \ref{ass:lower_bound_maximum_community_small} holds. Then the scan test \eqref{eq:known_probability_scan_test_statistic} is asymptotically powerful when, for all $C \subseteq V$ of size $|C| = r$,
\begin{equation}
\label{eq:unknown_probability_scan_test_powerful_condition_corollary}
\max_{D \subseteq C} \frac{\E_0[e(D)] \mspace{1mu} h\bigl(\scaling[C] - 1\bigr)}{|D| \mspace{1mu} \log(n / |D|)} \geq 1 + \epsilon \,,
\end{equation}
where $\epsilon > 0$ comes from the definition of the scan test in \eqref{eq:unknown_probability_scan_test_statistic}.
\end{corollary}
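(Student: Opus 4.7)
The strategy is to reduce to Theorem~\ref{thm:unknown_probability_scan_test_powerful} by verifying its two remaining hypotheses: $r = \smallO(n)$ and $\E_C[e(\optimalsubgraph[C])] \to \infty$ for every $C \subseteq V$ with $|C| = r$. (Assumption~\ref{ass:unknown_probability_maximum_inhomogeneity} and condition \eqref{eq:unknown_probability_scan_test_powerful_condition_corollary} are already hypotheses of the corollary.) The first is immediate: $r = \bigO(n^{1/2-\delta})$ from Assumption~\ref{ass:lower_bound_maximum_inhomogeneity}~\ref{ass:lbl:lower_bound_maximum_inhomogeneity_size}, or $r = n^{\smallO(1)}$ from Assumption~\ref{ass:lower_bound_maximum_community_small}~\ref{ass:lbl:lower_bound_maximum_community_small_size}; in either case $r = \smallO(n)$.

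For $\E_C[e(\optimalsubgraph[C])] \to \infty$, I would split on which assumption holds. \emph{Under Assumption~\ref{ass:lower_bound_maximum_inhomogeneity}}, condition~\ref{ass:lbl:lower_bound_maximum_inhomogeneity_sparsity} is equivalent to $r\overline{p}_C/\log(n/r) \to \infty$ uniformly in $C$. Plugging the test subset $D = C$ into the argmax of Definition~\ref{def:optimal_subset_argmax} yields
\begin{equation*}
\frac{\E_0[e(\optimalsubgraph[C])]}{|\optimalsubgraph[C]| \log(n/|\optimalsubgraph[C]|)} \geq \frac{\E_0[e(C)]}{r \log(n/r)} = \frac{(r-1)\mspace{1mu}\overline{p}_C}{2\log(n/r)} \to \infty \,.
\end{equation*}
Since $k \mapsto k \log(n/k)$ is increasing on $[1, n/\e]$ and $r = \smallO(n)$, for large $n$ we have $|\optimalsubgraph[C]| \log(n/|\optimalsubgraph[C]|) \geq \log n$, so $\E_0[e(\optimalsubgraph[C])] \to \infty$. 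Using $\scaling[C] \geq 1$ then gives $\E_C[e(\optimalsubgraph[C])] \to \infty$.

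\emph{Under Assumption~\ref{ass:lower_bound_maximum_community_small}}, this simple test-subgraph argument breaks down, and I would instead invoke condition~\eqref{eq:unknown_probability_scan_test_powerful_condition_corollary} itself. Since $h(\scaling[C] - 1)$ does not depend on $D$, the maximizer on the LHS of \eqref{eq:unknown_probability_scan_test_powerful_condition_corollary} is exactly $\optimalsubgraph[C]$, so $\E_0[e(\optimalsubgraph[C])] \mspace{1mu} h(\scaling[C] - 1) \geq (1+\epsilon) |\optimalsubgraph[C]| \log(n/|\optimalsubgraph[C]|)$. Combined with the elementary identity $\scaling[C] \log \scaling[C] = h(\scaling[C]-1) + (\scaling[C]-1) \geq h(\scaling[C]-1)$ for $\scaling[C] \geq 1$, this yields
\begin{equation*}
\E_C[e(\optimalsubgraph[C])] = \scaling[C] \mspace{1mu} \E_0[e(\optimalsubgraph[C])] \geq \frac{(1+\epsilon) |\optimalsubgraph[C]| \log(n/|\optimalsubgraph[C]|)}{\log \scaling[C]} \geq \frac{(1+\epsilon) \log n}{\log \scaling[C]} \,.
\end{equation*}
The admissibility constraint $\scaling[C] p_{ij} \leq 1$ for all $i \neq j \in C$ forces $\scaling[C] \leq 1/\overline{p}_C$, so $\log \scaling[C] \leq \log(1/\overline{p}_C) = \smallO(\log(n/r)/\log r)$ by~\ref{ass:lbl:lower_bound_maximum_community_small_sparsity}. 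Since $r = n^{\smallO(1)}$ gives $\log(n/r) \sim \log n$, we conclude $\log n / \log \scaling[C] \to \infty$, as required.

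The main obstacle is this second case: under the sparsity afforded by Assumption~\ref{ass:lower_bound_maximum_community_small}, neither $\max_D \E_0[e(D)]/(|D|\log(n/|D|))$ nor $\E_0[e(\optimalsubgraph[C])]$ need diverge on their own, so the argument cannot work with $\P_0$-quantities in isolation; it must combine the detection condition with the admissibility bound $\scaling[C] \leq 1/\overline{p}_C$ so that the signal strength compensates for the graph's sparsity. Once both auxiliary hypotheses are verified, the corollary follows by direct invocation of Theorem~\ref{thm:unknown_probability_scan_test_powerful}.
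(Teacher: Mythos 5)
Your proof is correct and follows the same high-level route as the paper — reduce to Theorem~\ref{thm:unknown_probability_scan_test_powerful} by verifying $r = \smallO(n)$ and $\E_C[e(\optimalsubgraph[C])] \to \infty$ for all $C$ — but the execution of the second verification differs in a nontrivial way. The paper's proof (shared with Corollary~\ref{cor:known_probability_scan_test_powerful}) first dispatches the case $\scaling[C] = \bigO(1)$ as a direct consequence of the detection condition, then for $\scaling[C] \to \infty$ uses the asymptotic $h(x-1) \asymp x\log x$ to write $\E_C[e(\optimalsubgraph[C])] \geq (1 + \smallO(1))\,|\optimalsubgraph[C]|\log(n/|\optimalsubgraph[C]|)/\log\scaling[C]$, and then bounds $\log\scaling[C]$: under Assumption~\ref{ass:lower_bound_maximum_inhomogeneity} via $\scaling[C] \leq 1/\overline{p}_C \leq r \leq \sqrt{n}$ together with the additional observation $|\optimalsubgraph[C]| \to \infty$, and under Assumption~\ref{ass:lower_bound_maximum_community_small} via $\log\scaling[C] \leq \log(1/\overline{p}_C) = \smallO(\log n)$. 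You avoid the case split on $\scaling[C]$ entirely. Under Assumption~\ref{ass:lower_bound_maximum_inhomogeneity} you note that part~\ref{ass:lbl:lower_bound_maximum_inhomogeneity_sparsity} alone already forces $\E_0[e(\optimalsubgraph[C])] \to \infty$ (since $\E_0[e(\optimalsubgraph[C])]/(|\optimalsubgraph[C]|\log(n/|\optimalsubgraph[C]|)) \to \infty$ and the denominator is at least $\log n$), and then $\scaling[C] \geq 1$ suffices — you do not even need the detection condition here. Under Assumption~\ref{ass:lower_bound_maximum_community_small} you replace the asymptotic $h(x-1) \asymp x\log x$ by the exact inequality $h(\scaling[C]-1) \leq \scaling[C]\log\scaling[C]$ (valid for all $\scaling[C] \geq 1$), obtaining $\E_C[e(\optimalsubgraph[C])] \geq (1+\epsilon)\log n/\log\scaling[C]$ uniformly, with no need for the subsequence device of Remark~\ref{rem:subsubsequence}. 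The paper's route buys a single intermediate bound \eqref{eq:alternative_informative_subgraph_edge_count} reused across both assumption regimes; your route buys a shorter, non-asymptotic argument that is arguably easier to check and exposes more clearly which hypothesis is doing the work in each case.
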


Moreover, a result similar to Corollary~\ref{cor:known_probability_scan_test_powerful2} also applies in the setting with unknown edge probabilities. This leads to the following result:
\begin{corollary}
\label{cor:unknown_probability_scan_test_powerful2}
Suppose the community size $r$ is known and that Assumption~\ref{ass:unknown_probability_maximum_inhomogeneity} holds. If $r \geq n^a$, $\wmin \geq n^{-b}$, and $(\wmax / \wmin)^2 = \smallO(n^{a - b})$ for $0 < b < a < 1$, then the scan test in \eqref{eq:known_probability_scan_test_statistic} is asymptotically powerful when for all $C \subseteq V$ of size $|C| = r$,
\begin{equation}
\max_{D \subseteq C} \frac{\E_0[e(D)] \mspace{1mu} h\bigl(\scaling[C] - 1\bigr)}{|D| \mspace{1mu} \log(n / |D|)} \geq 1 + \epsilon \,,
\end{equation}
where $\epsilon > 0$ comes from the definition of the scan test in \eqref{eq:unknown_probability_scan_test_statistic}.
\end{corollary}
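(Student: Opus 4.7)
The plan is to invoke Theorem~\ref{thm:unknown_probability_scan_test_powerful}, so I verify its hypotheses under the present conditions. Assumption~\ref{ass:unknown_probability_maximum_inhomogeneity} and the signal condition \eqref{eq:unknown_probability_scan_test_powerful_condition} are given directly, and $r = \smallO(n)$ is a standing assumption of the paper. The only nontrivial condition to check is that $\E_C[e(\optimalsubgraph[C])] \to \infty$ for every $C \subseteq V$ with $|C| = r$. Since $\scaling[C] \geq 1$, this reduces to establishing $\E_0[e(\optimalsubgraph[C])] \to \infty$.

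First I would exploit the defining maximality of $\optimalsubgraph[C]$ evaluated against the feasible candidate $D = C$:
\[
\frac{\E_0[e(\optimalsubgraph[C])]}{|\optimalsubgraph[C]| \log(n/|\optimalsubgraph[C]|)} \geq \frac{\E_0[e(C)]}{r \log(n/r)}.
\]
Combining this with the crude bounds $\E_0[e(\optimalsubgraph[C])] \leq |\optimalsubgraph[C]|^2 \wmax^2/2$ and $\E_0[e(C)] \geq r^2 \wmin^2/4$ (for $r \geq 2$), together with the monotonicity $\log(n/|\optimalsubgraph[C]|) \geq \log(n/r)$ that holds since $|\optimalsubgraph[C]| \leq r$, would give the size bound
\[
|\optimalsubgraph[C]| \geq \frac{r}{2 (\wmax/\wmin)^2}.
\]

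Next, rearranging the same optimality inequality to lower bound $\E_0[e(\optimalsubgraph[C])]$ directly gives
\[
\E_0[e(\optimalsubgraph[C])] \geq \E_0[e(C)] \cdot \frac{|\optimalsubgraph[C]|}{r} \geq \frac{r |\optimalsubgraph[C]| \wmin^2}{4} \geq \frac{r^2 \wmin^2}{8 (\wmax/\wmin)^2}.
\]
Substituting $r \geq n^a$, $\wmin \geq n^{-b}$, and $(\wmax/\wmin)^2 = \smallO(n^{a-b})$, the final expression is bounded below by a quantity of the form $n^{a-b}/\smallO(1)$, which tends to infinity because $a > b$. This gives the required limit, and the corollary then follows from Theorem~\ref{thm:unknown_probability_scan_test_powerful}.

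The main obstacle is the lower bound on $|\optimalsubgraph[C]|$: without adequate control of the inhomogeneity, the densest subgraph could collapse onto a very small set of high-weight vertices, leaving $\E_C[e(\optimalsubgraph[C])]$ bounded even when the signal condition is met. The condition $(\wmax/\wmin)^2 = \smallO(n^{a-b})$ is precisely tailored to rule this out; once it is in place, the remaining arithmetic is routine.
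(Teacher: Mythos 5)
Your proof is correct and takes essentially the same route as the paper's: verify $\E_C[e(\optimalsubgraph[C])]\to\infty$ by first lower bounding $|\optimalsubgraph[C]|$ via the maximality of $\optimalsubgraph[C]$ against the candidate $D=C$, then bounding the expected edge count from below. The paper phrases the size bound as a contradiction argument (introducing a sequence $x_n\to\infty$ with $\pmax/\pmin = n^{a-b}/x_n$ and deducing $|\optimalsubgraph[C]|\geq n^{b}\sqrt{x_n}$), and then uses the direct bound $\E_0[e(\optimalsubgraph[C])]\geq \tfrac{1}{2}|\optimalsubgraph[C]|^2\pmin$, whereas you derive $|\optimalsubgraph[C]|\geq r/(2(\wmax/\wmin)^2)$ directly and then re-use the optimality inequality once more; these are only cosmetic differences in the arithmetic.
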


%%%%%%%%%%%%%%%%%%%%%%%%%%%%%%%%%%%%%%%%%%%%%%%%%%%%%%%%%%%%%%%%%%%%%%%%%%%%%%%%
%%%%%%%%%%%%%%%%%%%%%%%%%%%%%%%%%%%%%%%%%%%%%%%%%%%%%%%%%%%%%%%%%%%%%%%%%%%%%%%%
%%%%%%%%%%%%%%%%%%%%%%%%%%%%%%%%%%%%%%%%%%%%%%%%%%%%%%%%%%%%%%%%%%%%%%%%%%%%%%%%
\section{Examples}
\label{sec:examples}
The results in the previous section provide conditions for when it is possible to detect a planted community $C \subseteq V$. When the scaling $\scaling[C]$ is large enough it is asymptotically possible to detect a planted community using the scan test, and when the scaling $\scaling[C]$ is too small it is impossible for any test to detect a planted community. To understand at which scaling $\scaling[C]$ this change happens, we need to characterize the behavior of
\begin{equation}
\label{eq:optimal_subgraph_condition}
\max_{D \subseteq C} \frac{\E_0[e(D)] \mspace{1mu} h\bigl(\scaling[C] - 1\bigr)}{|D| \mspace{1mu} \log(n / |D|)} \approx 1\,.
\end{equation}
The subgraph that attains the maximum above will be denoted by $\optimalsubgraph = \optimalsubgraph[C]$ and was defined in Definition~\ref{def:optimal_subset_argmax}. In this section, we present several examples of different random graph models and illustrate how \eqref{eq:optimal_subgraph_condition} depends on the inhomogeneity structure. For clarity of presentation, the parameters in these examples are chosen such that the scaling $\scaling[C]$ satisfying \eqref{eq:optimal_subgraph_condition} always converges to a constant.

In the examples below, the lower bound from Theorem~\ref{thm:lower_bound} as well as the upper bound from Theorems \ref{thm:known_probability_scan_test_powerful} and \ref{thm:unknown_probability_scan_test_powerful} are applicable because Assumptions \ref{ass:lower_bound_maximum_community_small}, \ref{ass:lower_bound_sparsity} and \ref{ass:unknown_probability_maximum_inhomogeneity} are all satisfied\footnote{\scriptsize{}The examples in Section~\ref{subsec:example_arbitrary_number_of_weights} consider randomly sampled vertex weights, and therefore the assumptions in this section hold with high probability. Furthermore, this section also contains some examples where Assumption~\ref{ass:lower_bound_maximum_inhomogeneity} instead of Assumption~\ref{ass:lower_bound_maximum_community_small} holds.}. Furthermore, it can be checked that Assumption~\ref{ass:lower_bound_maximum_inhomogeneity} \ref{ass:lbl:lower_bound_maximum_inhomogeneity_size} and \ref{ass:lbl:lower_bound_maximum_inhomogeneity_inhomogeneity} are also satisfied. Thus, the only reason why Assumption~\ref{ass:lower_bound_maximum_inhomogeneity} does not hold in the examples below is because the edge density condition from Assumption~\ref{ass:lower_bound_maximum_inhomogeneity}~\ref{ass:lbl:lower_bound_maximum_inhomogeneity_sparsity} is not satisfied. The reason for this is that it is not possible to simultaneously satisfy that edge density condition and have the scaling $\scaling[C]$ from \eqref{eq:optimal_subgraph_condition} converge to a constant larger than $1$. This means that a choice had to be made between either selecting examples that satisfy Assumption~\ref{ass:lower_bound_maximum_inhomogeneity} or having $\scaling[C] - 1$ converge to a positive constant. We choose for the latter option to improve the clarity of presentation.

There are, however, also many interesting examples where Assumption~\ref{ass:lower_bound_maximum_inhomogeneity} does hold. For instance, it is possible to satisfy Assumption~\ref{ass:lower_bound_maximum_inhomogeneity} in any of the examples below by simply increasing the community size $r$ or the edge density (by increasing all vertex weights by the same factor). 
Thus, in the examples below, it is possible to apply Theorems \ref{thm:lower_bound}, \ref{thm:known_probability_scan_test_powerful}, and \ref{thm:unknown_probability_scan_test_powerful} because Assumptions \ref{ass:lower_bound_maximum_community_small}, \ref{ass:lower_bound_sparsity} and \ref{ass:unknown_probability_maximum_inhomogeneity} hold, and this remains true for larger community sizes or denser graphs but then because of Assumptions \ref{ass:lower_bound_maximum_inhomogeneity}, \ref{ass:lower_bound_sparsity} and \ref{ass:unknown_probability_maximum_inhomogeneity}. This explains how Assumptions \ref{ass:lower_bound_maximum_inhomogeneity} and \ref{ass:lower_bound_maximum_community_small} are nicely complementing each other to make our results applicable in a wide range of scenarios.

%%%%%%%%%%%%%%%%%%%%%%%%%%%%%%%%%%%%%%%%%%%%%%%%%%%%%%%%%%%%%%%%%%%%%%%%%%%%%%%%
%%%%%%%%%%%%%%%%%%%%%%%%%%%%%%%%%%%%%%%%%%%%%%%%%%%%%%%%%%%%%%%%%%%%%%%%%%%%%%%%
\subsection{Erd\texorpdfstring{\H{o}}{\"{o}}s-R\'{e}nyi random graph}
\label{subsec:example_erdos_renyi}
The arguably simplest setting where we can apply our results is that of an Erd\H{o}s-R\'{e}nyi random graph, where all edge probabilities $p_{ij} = p$ are equal, so that the graph is completely homogeneous. In this case, the subgraph $\optimalsubgraph$ that attains the maximum in \eqref{eq:optimal_subgraph_condition} is always the complete planted community $C \subseteq V$. Let $r = \smallO(n)$, $r \to \infty$ and $p \to 0$ be such that $r^2 p \to \infty$. One easily sees that \eqref{eq:optimal_subgraph_condition} becomes
\begin{equation}
\label{eq:optimal_subgraph_condition_erdos_renyi}
\max_{D \subseteq C} \frac{\E_0[e(D)] \mspace{1mu} h\bigl(\scaling[C] - 1\bigr)}{|D| \mspace{1mu} \log(n / |D|)}
  = \frac{\E_0[e(C)] \mspace{1mu} h\bigl(\scaling[C] - 1\bigr)}{|C| \mspace{1mu} \log(n / |C|)}
  \asymp \frac{r \mspace{1mu} p \mspace{1mu} h\bigl(\scaling[C] - 1\bigr)}{2 \log(n / r)}
  \asymp \frac{r \mspace{1mu} H_{p}(\scaling[C] \mspace{1mu} p)}{2 \log(n / r)} \,.
\end{equation}
where $H_{p}(\scaling[C] \mspace{1mu} p)$ is the Kullback-Leibler divergence between $\text{Bern}(p)$ and $\text{Bern}(\scaling[C] \mspace{1mu} p)$. Note that this is the same condition found by Arias-Castro and Verzelen, who considered the problem of detecting a planted community in an Erd\H{o}s-R\'{e}nyi random graph \cite[see (9) and (15)]{Arias-Castro2014}.

%%%%%%%%%%%%%%%%%%%%%%%%%%%%%%%%%%%%%%%%%%%%%%%%%%%%%%%%%%%%%%%%%%%%%%%%%%%%%%%%
%%%%%%%%%%%%%%%%%%%%%%%%%%%%%%%%%%%%%%%%%%%%%%%%%%%%%%%%%%%%%%%%%%%%%%%%%%%%%%%%
\subsection{Rank-1 random graph with 2 weights}
\label{subsec:example_stochastic_block_model}
A slightly more complex setting is where the underlying graph has a rank-1 structure with two different weights. Some of the vertices have large weight $\wmax$, and the remaining vertices have small weight $\wmin$. Therefore, there are three different edge probabilities in the underlying graph: $p_{ij} = \wmax^2$ when both endpoints have large weight, $p_{ij} = \wmin^2$ when both endpoints have small weight, and $p_{ij} = \wmax \wmin$ when one of the endpoints has large weight and the other small weight.

The subgraph $\optimalsubgraph[C]$ that attains the maximum in \eqref{eq:optimal_subgraph_condition} depends crucially on the amount of inhomogeneity in $C \subseteq V$, and because we only have two different weights this translates to the ratio of vertices with large weight $\wmax$ and vertices with small weight $\wmin$ in $C$. Moreover, it can be checked that the maximum in \eqref{eq:optimal_subgraph_condition} is attained either on the whole subgraph $C$, or on the subgraph $C_\textup{max} \subseteq C$ consisting of only the large-weight vertices in $C$. Specifically, assuming $\log(n / |C|) \asymp \log(n)$, the maximum in \eqref{eq:optimal_subgraph_condition} is attained at $C_\textup{max}$ when
\begin{equation}
\label{eq:optimal_subgraph_condition_2_weight}
|C_\textup{max}| > (1 + \smallO(1)) \frac{|C| - 1 + (\wmax / \wmin)^2}{(\wmax / \wmin - 1)^2} \,,
\end{equation}
and otherwise it is attained at $C$. Here we can see that the amount of inhomogeneity plays an important role in determining the maximum in \eqref{eq:optimal_subgraph_condition}, and therefore in determining whether a planted community can be detected or not.

\begin{figure}[b!]
\centering
\begin{subfigure}{.5\textwidth}
\raggedright
\begin{tikzpicture}[scale=0.8, every node/.style={transform shape}]
\draw ( 0.0,  0.0) node[inner sep=0] {\includegraphics[scale=0.1]{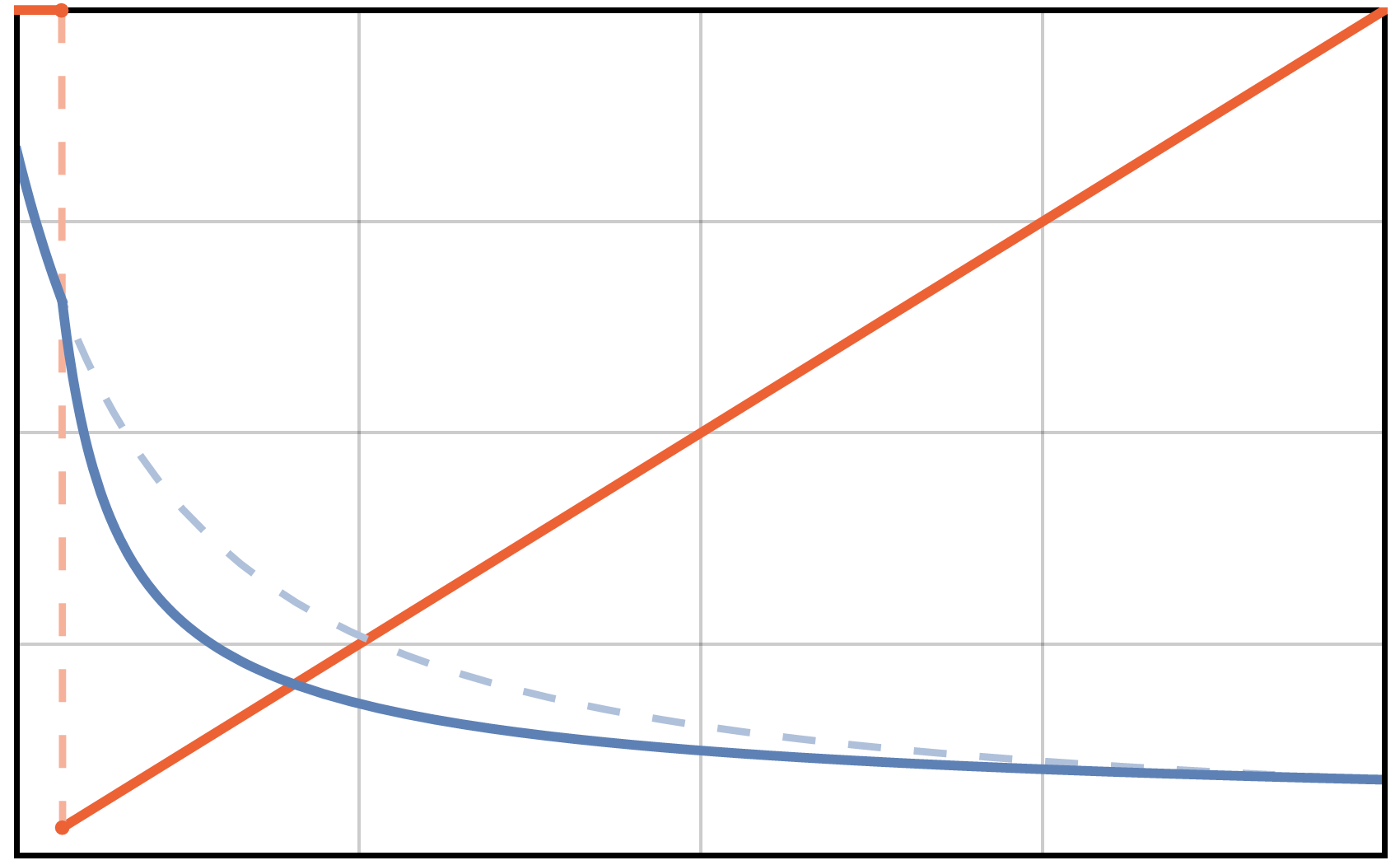}};

%\draw (-2.930,  1.800) node[shape=circle,fill=red,scale=0.2,anchor=center] {};
%\draw (-2.930,  0.894) node[shape=circle,fill=red,scale=0.2,anchor=center] {};
%\draw (-2.930, -0.008) node[shape=circle,fill=red,scale=0.2,anchor=center] {};
%\draw (-2.930, -0.912) node[shape=circle,fill=red,scale=0.2,anchor=center] {};
%\draw (-2.930, -1.816) node[shape=circle,fill=red,scale=0.2,anchor=center] {};
\draw (-2.930, -0.008) node[inner sep=4pt,anchor=south,xshift=-16pt,rotate=90] {\small{}Threshold $\scaling[C]$};
\draw (-2.930,  1.800) node[inner sep=4pt,anchor=east] {\small$40$};
\draw (-2.930,  0.894) node[inner sep=4pt,anchor=east] {\small$30$};
\draw (-2.930, -0.008) node[inner sep=4pt,anchor=east] {\small$20$};
\draw (-2.930, -0.912) node[inner sep=4pt,anchor=east] {\small$10$};
\draw (-2.930, -1.816) node[inner sep=4pt,anchor=east] {\small$0$};

%\draw (-2.930, -1.816) node[shape=circle,fill=red,scale=0.2,anchor=center] {};
%\draw (-1.464, -1.816) node[shape=circle,fill=red,scale=0.2,anchor=center] {};
%\draw ( 0.000, -1.816) node[shape=circle,fill=red,scale=0.2,anchor=center] {};
%\draw ( 1.464, -1.816) node[shape=circle,fill=red,scale=0.2,anchor=center] {};
%\draw ( 2.930, -1.816) node[shape=circle,fill=red,scale=0.2,anchor=center] {};
\draw ( 0.000, -1.816) node[inner sep=4pt,anchor=north,yshift=-16pt] {\small{}Vertices with large weight $\wmax$ (i.e., $|C_\textup{max}|$)};
\draw (-2.930, -1.816) node[inner sep=4pt,anchor=north] {\small$0$};
\draw (-1.464, -1.816) node[inner sep=4pt,anchor=north] {\small$\frac{1}{4} r$};
\draw ( 0.000, -1.816) node[inner sep=4pt,anchor=north] {\small$\frac{1}{2} r$};
\draw ( 1.464, -1.816) node[inner sep=4pt,anchor=north] {\small$\frac{3}{4} r$};
\draw ( 2.930, -1.816) node[inner sep=4pt,anchor=north] {\small$r$};

%\draw ( 2.930, -1.816) node[shape=circle,fill=red,scale=0.2,anchor=center] {};
%\draw ( 2.930, -0.912) node[shape=circle,fill=red,scale=0.2,anchor=center] {};
%\draw ( 2.930, -0.008) node[shape=circle,fill=red,scale=0.2,anchor=center] {};
%\draw ( 2.930,  0.894) node[shape=circle,fill=red,scale=0.2,anchor=center] {};
%\draw ( 2.930,  1.800) node[shape=circle,fill=red,scale=0.2,anchor=center] {};
\draw ( 2.930, -0.008) node[inner sep=4pt,anchor=north,xshift=18pt,rotate=90] {\small{}Subgraph size $|\optimalsubgraph|$};
\draw ( 2.930, -1.816) node[inner sep=4pt,anchor=west] {\small$0$};
\draw ( 2.930, -0.912) node[inner sep=4pt,anchor=west] {\small$\frac{1}{4} r$};
\draw ( 2.930, -0.008) node[inner sep=4pt,anchor=west] {\small$\frac{1}{2} r$};
\draw ( 2.930,  0.894) node[inner sep=4pt,anchor=west] {\small$\frac{3}{4} r$};
\draw ( 2.930,  1.800) node[inner sep=4pt,anchor=west] {\small$r$};
\end{tikzpicture}
\caption{\fussy $r = \lfloor \log(n)^{3} \rfloor$, $\wmax = \smash{\frac{1}{\log(n)}}$, $\wmin = \smash{\frac{1}{6.5 \log(n)}}$.$\;\;\;$}
\label{fig:two_weights_scaling_example_sub1}
\end{subfigure}%
\begin{subfigure}{.5\textwidth}
\raggedleft
\begin{tikzpicture}[scale=0.8, every node/.style={transform shape}]
\draw ( 0.0,  0.0) node[inner sep=0] {\includegraphics[scale=0.1]{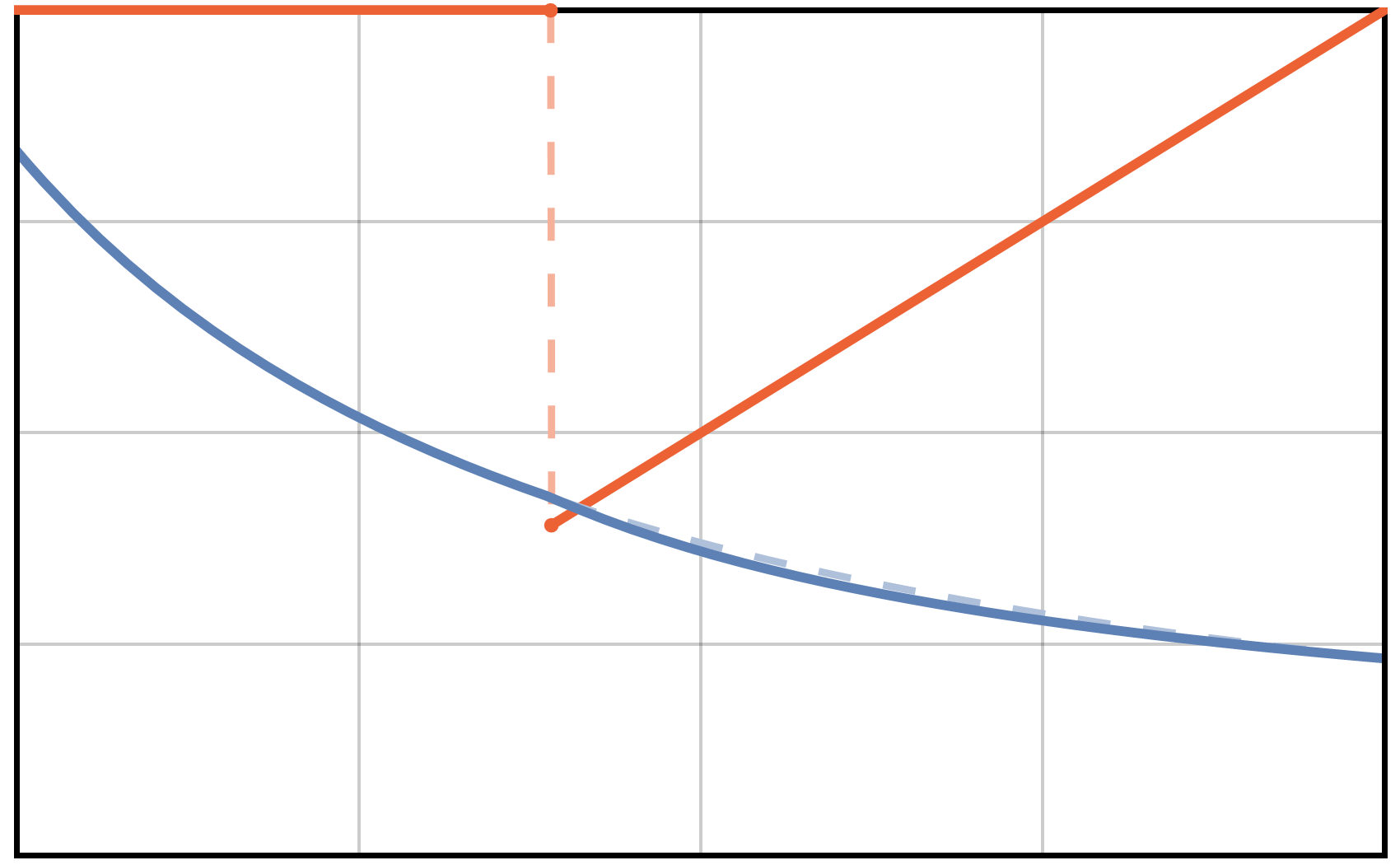}};

%\draw (-2.930,  1.800) node[shape=circle,fill=red,scale=0.2,anchor=center] {};
%\draw (-2.930,  0.894) node[shape=circle,fill=red,scale=0.2,anchor=center] {};
%\draw (-2.930, -0.008) node[shape=circle,fill=red,scale=0.2,anchor=center] {};
%\draw (-2.930, -0.912) node[shape=circle,fill=red,scale=0.2,anchor=center] {};
%\draw (-2.930, -1.816) node[shape=circle,fill=red,scale=0.2,anchor=center] {};
\draw (-2.930, -0.008) node[inner sep=4pt,anchor=south,xshift=-16pt,rotate=90] {\small{}Threshold $\scaling[C]$};
\draw (-2.930,  1.800) node[inner sep=4pt,anchor=east] {\small$40$};
\draw (-2.930,  0.894) node[inner sep=4pt,anchor=east] {\small$30$};
\draw (-2.930, -0.008) node[inner sep=4pt,anchor=east] {\small$20$};
\draw (-2.930, -0.912) node[inner sep=4pt,anchor=east] {\small$10$};
\draw (-2.930, -1.816) node[inner sep=4pt,anchor=east] {\small$0$};

%\draw (-2.930, -1.816) node[shape=circle,fill=red,scale=0.2,anchor=center] {};
%\draw (-1.464, -1.816) node[shape=circle,fill=red,scale=0.2,anchor=center] {};
%\draw ( 0.000, -1.816) node[shape=circle,fill=red,scale=0.2,anchor=center] {};
%\draw ( 1.464, -1.816) node[shape=circle,fill=red,scale=0.2,anchor=center] {};
%\draw ( 2.930, -1.816) node[shape=circle,fill=red,scale=0.2,anchor=center] {};
\draw ( 0.000, -1.816) node[inner sep=4pt,anchor=north,yshift=-16pt] {\small{}Vertices with large weight $\wmax$ (i.e., $|C_\textup{max}|$)};
\draw (-2.930, -1.816) node[inner sep=4pt,anchor=north] {\small$0$};
\draw (-1.464, -1.816) node[inner sep=4pt,anchor=north] {\small$\frac{1}{4} r$};
\draw ( 0.000, -1.816) node[inner sep=4pt,anchor=north] {\small$\frac{1}{2} r$};
\draw ( 1.464, -1.816) node[inner sep=4pt,anchor=north] {\small$\frac{3}{4} r$};
\draw ( 2.930, -1.816) node[inner sep=4pt,anchor=north] {\small$r$};

%\draw ( 2.930, -1.816) node[shape=circle,fill=red,scale=0.2,anchor=center] {};
%\draw ( 2.930, -0.912) node[shape=circle,fill=red,scale=0.2,anchor=center] {};
%\draw ( 2.930, -0.008) node[shape=circle,fill=red,scale=0.2,anchor=center] {};
%\draw ( 2.930,  0.894) node[shape=circle,fill=red,scale=0.2,anchor=center] {};
%\draw ( 2.930,  1.800) node[shape=circle,fill=red,scale=0.2,anchor=center] {};
\draw ( 2.930, -0.008) node[inner sep=4pt,anchor=north,xshift=18pt,rotate=90] {\small{}Subgraph size $|\optimalsubgraph|$};
\draw ( 2.930, -1.816) node[inner sep=4pt,anchor=west] {\small$0$};
\draw ( 2.930, -0.912) node[inner sep=4pt,anchor=west] {\small$\frac{1}{4} r$};
\draw ( 2.930, -0.008) node[inner sep=4pt,anchor=west] {\small$\frac{1}{2} r$};
\draw ( 2.930,  0.894) node[inner sep=4pt,anchor=west] {\small$\frac{3}{4} r$};
\draw ( 2.930,  1.800) node[inner sep=4pt,anchor=west] {\small$r$};
\end{tikzpicture}
\caption{\fussy $r = \lfloor \log(n)^{3} \rfloor$, $\wmax = \smash{\frac{1}{2.5 \log(n)}}$, $\wmin = \smash{\frac{1}{6.5 \log(n)}}$.}
\label{fig:two_weights_scaling_example_sub2}
\end{subfigure}
\caption{Example of the threshold scaling $\scaling[C]$ required for detecting a planted community using the optimal subgraph $\optimalsubgraph[C]$ (blue, left axis) and the threshold scaling $\scaling[C]$ required when using the whole subgraph $C$ instead (dashed blue, left axis), together with the size of the optimal subgraph $|\optimalsubgraph[C]|$ (red, right axis). The specific numerical values are simply chosen to highlight the different regimes possible; other choices produce similar results.}
\label{fig:two_weights_scaling_example}
\end{figure}

In Figure~\ref{fig:two_weights_scaling_example} we give two examples of the threshold scaling $\scaling[C]$ required for the scan test to be asymptotically powerful. When, for every $C \subseteq V$, the scaling $\scaling[C]$ is chosen above the blue curve then the scan test is asymptotically powerful by Theorems \ref{thm:known_probability_scan_test_powerful} and \ref{thm:unknown_probability_scan_test_powerful}, and when it is chosen below the blue curve then all tests are asymptotically powerless by Theorem~\ref{thm:lower_bound}. Here we can clearly see a sharp bend in the blue curve at the point where $|C_\textup{max}|$ crosses the threshold in \eqref{eq:optimal_subgraph_condition_2_weight}. This happens because there are many vertices with large weight when $|C_\textup{max}|$ is large and it is optimal to only use these vertices when trying to detect a planted community. However, there no longer are enough vertices with large weight when $|C_\textup{max}|$ becomes too small and it becomes more beneficial to also use the vertices with small weight.

%%%%%%%%%%%%%%%%%%%%%%%%%%%%%%%%%%%%%%%%%%%%%%%%%%%%%%%%%%%%%%%%%%%%%%%%%%%%%%%%
%%%%%%%%%%%%%%%%%%%%%%%%%%%%%%%%%%%%%%%%%%%%%%%%%%%%%%%%%%%%%%%%%%%%%%%%%%%%%%%%
\subsection{Rank-1 random graph with 3 weights}

Extending the setting in the previous section, we can consider a rank-1 random graph with three different weights. Some vertices have large weight $\wmax$, some vertices have medium weight $\w_\textup{med}$, and the remaining vertices have small weight $\wmin$. In this setting the situation becomes even more complex, and the subgraph $\optimalsubgraph$ that attains the maximum in \eqref{eq:optimal_subgraph_condition} depends on the amount of vertices of each type in $C \subseteq V$.

In Figure~\ref{fig:three_weights_scaling_example} we give an example of the threshold scaling $\scaling[C]$ required for the scan test to be asymptotically powerful in the setting with three weights. When, for every $C \subseteq V$, the scaling $\scaling[C]$ is chosen above the surface then the scan test is asymptotically powerful by Theorems \ref{thm:known_probability_scan_test_powerful} and \ref{thm:unknown_probability_scan_test_powerful}, and when it is chosen below the surface then all tests are asymptotically powerless by Theorem~\ref{thm:lower_bound}. We can see that when there are enough vertices with large weight $\wmax$ then it is optimal to only use these large-weight vertices (green region), but as the number of large-weight vertices decreases it becomes beneficial to include also medium-weight vertices (orange region) or even small-weight vertices (blue region). Note that the cross-section with no medium-weight vertices is the same as Figure~\ref{fig:two_weights_scaling_example_sub1} and the cross-section with no large-weight vertices is the same as Figure~\ref{fig:two_weights_scaling_example_sub2}.

\begin{figure}[b!]
\centering
\begin{tikzpicture}[scale=0.8, every node/.style={transform shape}]
\draw ( 0.0,  0.0) node[inner sep=0] {\includegraphics[scale=0.1]{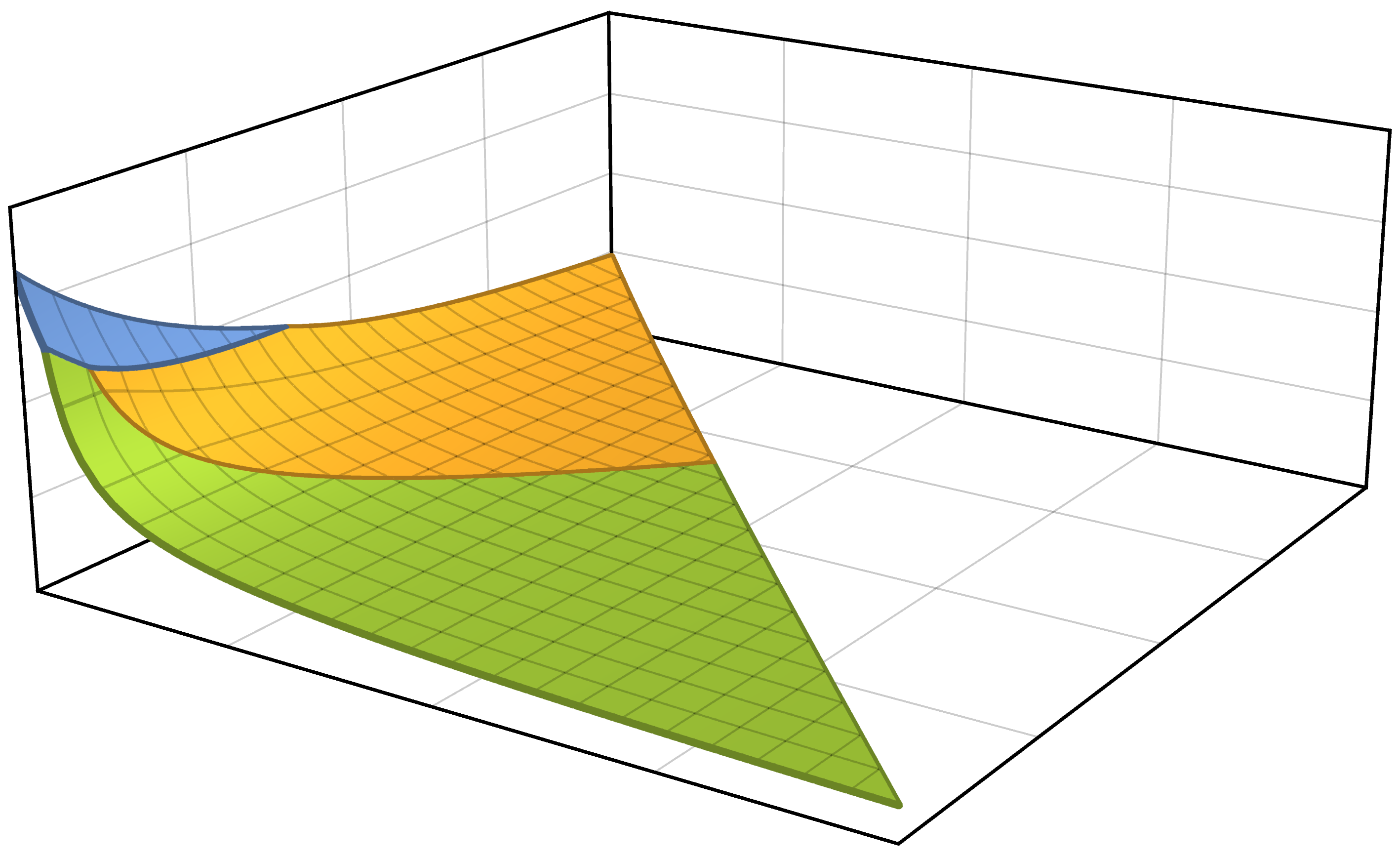}};

%\draw [step=0.01,blue,ultra thin] (-5,-5) grid (5,5);
%\draw [step=0.25,red,ultra thin] (-5,-5) grid (5,5);
%\draw (0, 0) node[shape=circle,fill=blue,scale=0.2,anchor=center] {};
%\draw (0, 0) node[anchor=north] {$(0,0)$};

%\draw (-4.931,  1.580) node[shape=circle,fill=red,scale=0.2,anchor=center] {};
%\draw (-4.884,  0.878) node[shape=circle,fill=red,scale=0.2,anchor=center] {};
%\draw (-4.832,  0.186) node[shape=circle,fill=red,scale=0.2,anchor=center] {};
%\draw (-4.784, -0.496) node[shape=circle,fill=red,scale=0.2,anchor=center] {};
%\draw (-4.726, -1.154) node[shape=circle,fill=red,scale=0.2,anchor=center] {};
\draw (-4.832,  0.186) node[inner sep=4pt,anchor=south,xshift=-16pt,rotate=94.1] {\small{}Threshold $\scaling[C]$};
\draw (-4.931,  1.580) node[inner sep=4pt,anchor=east] {\small$40$};
\draw (-4.884,  0.878) node[inner sep=4pt,anchor=east] {\small$30$};
\draw (-4.832,  0.186) node[inner sep=4pt,anchor=east] {\small$20$};
\draw (-4.784, -0.496) node[inner sep=4pt,anchor=east] {\small$10$};
%\draw (-4.726, -1.154) node[inner sep=4pt,anchor=east] {\small$0$};

%\draw (-4.726, -1.154) node[shape=circle,fill=red,scale=0.2,anchor=center] {};
%\draw (-3.374, -1.558) node[shape=circle,fill=red,scale=0.2,anchor=center] {};
%\draw (-1.915, -1.988) node[shape=circle,fill=red,scale=0.2,anchor=center] {};
%\draw (-0.322, -2.454) node[shape=circle,fill=red,scale=0.2,anchor=center] {};
%\draw ( 1.424, -2.968) node[shape=circle,fill=red,scale=0.2,anchor=center] {};
\draw (-1.915, -1.988) node[inner sep=4pt,anchor=north,xshift=-14pt,yshift=-16pt,rotate=-16.4] {\small{}Vertices with large weight $\wmax$};
\draw (-4.726, -1.154) node[inner xsep=3pt,inner ysep=2pt,anchor=north east] {\small$0$};
\draw (-3.374, -1.558) node[inner xsep=3pt,inner ysep=2pt,anchor=north east] {\small$\frac{1}{4} r$};
\draw (-1.915, -1.988) node[inner xsep=3pt,inner ysep=2pt,anchor=north east] {\small$\frac{1}{2} r$};
\draw (-0.322, -2.454) node[inner xsep=3pt,inner ysep=2pt,anchor=north east] {\small$\frac{3}{4} r$};
\draw ( 1.424, -2.968) node[inner xsep=3pt,inner ysep=2pt,anchor=north east] {\small$r$};

%\draw ( 1.424, -2.968) node[shape=circle,fill=red,scale=0.2,anchor=center] {};
%\draw ( 2.415, -2.207) node[shape=circle,fill=red,scale=0.2,anchor=center] {};
%\draw ( 3.295, -1.538) node[shape=circle,fill=red,scale=0.2,anchor=center] {};
%\draw ( 4.070, -0.945) node[shape=circle,fill=red,scale=0.2,anchor=center] {};
%\draw ( 4.756, -0.420) node[shape=circle,fill=red,scale=0.2,anchor=center] {};
\draw ( 3.295, -1.538) node[inner sep=4pt,anchor=north,xshift=18pt,yshift=-12pt,rotate=37.3] {\small{}Vertices with medium weight $\w_\textup{med}$};
\draw ( 1.424, -2.968) node[inner xsep=3pt,inner ysep=1pt,anchor=north west] {\small$0$};
\draw ( 2.415, -2.207) node[inner xsep=3pt,inner ysep=1pt,anchor=north west] {\small$\frac{1}{4} r$};
\draw ( 3.295, -1.538) node[inner xsep=3pt,inner ysep=1pt,anchor=north west] {\small$\frac{1}{2} r$};
\draw ( 4.070, -0.945) node[inner xsep=3pt,inner ysep=1pt,anchor=north west] {\small$\frac{3}{4} r$};
\draw ( 4.756, -0.420) node[inner xsep=3pt,inner ysep=1pt,anchor=north west] {\small$r$};
\end{tikzpicture}
\caption{Example of the threshold scaling $\scaling[C]$ required for detecting a planted community when using the optimal subgraph $\optimalsubgraph[C]$. In the blue region $\optimalsubgraph[C]$ consists of all vertices, in the orange region $\optimalsubgraph[C]$ consists of both large and medium-weight vertices, and in the green region $\optimalsubgraph[C]$ consists only of large-weight vertices. The parameters used are 
$r = \lfloor \log(n)^{3} \rfloor$, $\wmax = \smash{\frac{1}{\log(n)}}$, $\w_\textup{med} = \smash{\frac{1}{2.5 \log(n)}}$, $\wmin = \smash{\frac{1}{6.5 \log(n)}}$. These values are chosen for ease of comparison with Figure~\ref{fig:two_weights_scaling_example}.}
\label{fig:three_weights_scaling_example}
\end{figure}

%%%%%%%%%%%%%%%%%%%%%%%%%%%%%%%%%%%%%%%%%%%%%%%%%%%%%%%%%%%%%%%%%%%%%%%%%%%%%%%%
%%%%%%%%%%%%%%%%%%%%%%%%%%%%%%%%%%%%%%%%%%%%%%%%%%%%%%%%%%%%%%%%%%%%%%%%%%%%%%%%
\subsection{Rank-1 random graph with an arbitrary number of weights}
\label{subsec:example_arbitrary_number_of_weights}
In this section we consider the setting where the graph contains several different vertex weights. In this case it is more difficult to characterize the subgraph $\optimalsubgraph[C]$ that maximizes \eqref{eq:optimal_subgraph_condition} for a given subgraph $C \subseteq V$, and finding this subgraph becomes optimization problem. This is because, for a given size $|D|$, we only need to consider the subgraph $D$ consisting of the $|D|$ largest weights in $C$. Using this insight we can approximate \eqref{eq:optimal_subgraph_condition}. Let $\hat{F}_C(x)$ be the empirical distribution function of the weights in $C$, then
\begin{align}
\label{eq:optimal_subgraph_condition_simplified}
\max_{D \subseteq C} \frac{\E_0[e(D)] \mspace{1mu} h\bigl(\scaling[C] - 1\bigr)}{|D| \mspace{1mu} \log(n / |D|)}
  &\approx \max_{k \in \{1,\ldots,r\}} \: \frac{\binom{k}{2} \bigl(\frac{r}{k} \int_{\frac{r-k}{r}}^1 \hat{F}_C^\invf(y) \mspace{1mu} dy\bigr)^{\!2} \mspace{1mu} h(\scaling[C] - 1)}{k \log\bigl(n / k\bigr)}\\
  &\approx \max_{\alpha \in (0, 1]} \: \frac{r}{2 \mspace{1mu} \alpha} \mspace{1mu} \frac{\bigl(\int_{1 - \alpha}^1 \hat{F}_C^\invf(y) \mspace{1mu} dy\bigr)^{\!2} \mspace{1mu} h(\scaling[C] - 1)}{\log(n)} \,,\notag
\end{align}
where $\hat{F}_C^\invf(y) = \inf\bigl\{x \in \R : y \leq \hat{F}_C(x)\bigr\}$ is the quantile function of $\hat{F}_C(x)$, and we have assumed that $r = n^{\smallO(1)}$ such that $\log(n / r) \asymp \log(n)$ in the second approximation above.

To apply \eqref{eq:optimal_subgraph_condition_simplified} we need to know $\hat{F}_C(x)$, which is different for every subgraph $C \subseteq V$. However, instead of characterizing the threshold scaling $\scaling[C]$ for every subgraph $C$, we can instead consider a uniformly chosen subgraph $C$. In this way, if the vertex weights are sampled from a distribution $W$ with distribution function $F(x)$, then we know from the Glivenko-Cantelli theorem that $\hat{F}_C(x)$ will eventually be close to $F(x)$, uniformly in $x$. With this in mind, we can consider the required threshold scaling $\scaling[C]$ when $C$ is a uniformly chosen subgraph and the vertex weights are sampled from a distribution $W$.

In Table~\ref{tbl:random_weights_example_analytic} this is done for a community of size $r = \lfloor \log(n)^4 \rfloor$ and weight distribution $W = (s + X) / \log(n)^{3/2}$, where we consider several different distributions $X$. We add a small constant $s$ to ensure that none of the vertex weights can become too small and we have normalized the weights by $\log(n)^{3/2}$ to ensure that in each example the maximum weight is less than $1$ with high probability. These choices ensure that Assumptions \ref{ass:lower_bound_maximum_community_small}, \ref{ass:lower_bound_sparsity}, and \ref{ass:unknown_probability_maximum_inhomogeneity} hold with high probability. Furthermore, we have that $r \mspace{1mu} \E[W]^2 / \log(n/r) = \bigO(1)$, and by \eqref{eq:optimal_subgraph_condition} this guarantees that $\scaling[C] = \bigO(1)$, so we obtain a numerical value for $\scaling[C]$ that is asymptotically independent of $n$.

Moreover, in Table~\ref{tbl:random_weights_example_analytic2} we consider the same examples as in Table~\ref{tbl:random_weights_example_analytic} but with a larger community size $r = \lfloor n^{1/4} \log(n)^4 \rfloor$. In this case Assumption~\ref{ass:lower_bound_maximum_community_small} does not hold because the community size $r$ is too large. However, we can now apply Assumption~\ref{ass:lower_bound_maximum_inhomogeneity} instead. To see this, note that Assumption~\ref{ass:lower_bound_maximum_inhomogeneity} \ref{ass:lbl:lower_bound_maximum_inhomogeneity_size} and \ref{ass:lbl:lower_bound_maximum_inhomogeneity_sparsity} hold with high probability provided $\delta < 1/4$. Furthermore,  Assumption~\ref{ass:lower_bound_maximum_inhomogeneity}~\ref{ass:lbl:lower_bound_maximum_inhomogeneity_inhomogeneity} also holds with high probability because the edge probabilities differ by at most a factor $\log(n)^2$ (i.e., $\pmax / \pmin = \bigO(\log(n)^2)$) with high probability.

This shows that Assumptions \ref{ass:lower_bound_maximum_inhomogeneity} and \ref{ass:lower_bound_maximum_community_small} are nicely complementing each other. For small communities (as in Table~\ref{tbl:random_weights_example_analytic}) our results can be applied because Assumptions \ref{ass:lower_bound_maximum_community_small}, \ref{ass:lower_bound_sparsity}, and \ref{ass:unknown_probability_maximum_inhomogeneity} hold with high probability, and for large communities (as in Table~\ref{tbl:random_weights_example_analytic2}) our results can still be applied because Assumptions \ref{ass:lower_bound_maximum_inhomogeneity}, \ref{ass:lower_bound_sparsity}, and \ref{ass:unknown_probability_maximum_inhomogeneity} hold with high probability.

\begin{table}[p]
\centering
\caption{\hyphenpenalty=10000%
The threshold scaling $\scaling[C]$ required to detect a planted community $C$ that is planted uniformly at random (based on setting the approximation in \eqref{eq:optimal_subgraph_condition_simplified} equal to $1$ and then solving for $\scaling[C]$). We provide the analytic results together with a numerical example where $\E[X] = 1$. The community size is $r = \lfloor \log(n)^4 \rfloor$.}
\label{tbl:random_weights_example_analytic}
\begin{tabularx}{0.92\textwidth}{p{14mm} p{20mm} X p{16mm}}
\arrayrulecolor{black}\toprule
\multicolumn{2}{l}{\hspace{15mm}$W$} & \multicolumn{1}{l}{Threshold $\scaling[C]$} & \multicolumn{1}{l}{$|\optimalsubgraph|$}\\
\arrayrulecolor{black}\midrule
\addlinespace[0.9ex]
$\frac{s + X}{\log(n)^{3/2}}$, & $\hspace{-0.7em}\scriptstyle{}X \sim \text{Degen}(\delta)$
  & $h^\invf\hspace{-0.1em}\left(\frac{2}{(s+1)^2}\right) \!+\! 1$
  & $r$\\
\addlinespace[0.9ex]
& $\hspace{-0.7em}\scriptstyle{}\delta=1,\; s=0.1$
  & $3.311$ & $1.000 \cdot r$\\
\addlinespace[0.6ex]
\arrayrulecolor{black!5!white}\midrule
\addlinespace[0.9ex]
$\frac{s + t X}{\log(n)^{3/2}}$, & $\hspace{-0.7em}\scriptstyle{}X \sim \text{Bern}(q)$
  & $h^\invf\hspace{-0.1em}\left(\frac{2}{q(s+t)^2} \!\medwedge\! \frac{2}{(s + q t)^2}\right) \!+\! 1$
  & $q \mspace{1mu} r \;\;\text{or}\;\; r$\\
\addlinespace[0.9ex]
& $\hspace{-0.7em}\scriptstyle{}q=0.5,\; t=2,\; s=0.1$
  & $2.624$ & $0.500 \cdot r$\\
\addlinespace[0.6ex]
\arrayrulecolor{black!5!white}\midrule
\addlinespace[0.9ex]
$\frac{s + X}{\log(n)^{3/2}}$, & $\hspace{-0.7em}\scriptstyle{}X \sim \text{Unif}(a, b)$
  & $h^\invf\hspace{-0.1em}\left(\frac{27}{4} \mspace{1mu} \frac{b - a}{(b + s)^3}\right) \!+\! 1$
  & $\frac{2}{3} \mspace{1mu} \frac{b+s}{b-a} \mspace{1mu} r$\\
\addlinespace[0.9ex]
& $\hspace{-0.7em}\scriptstyle{}a=0,\; b=2,\; s=0.1$
  & $3.144$ & $0.700 \cdot r$\\
\addlinespace[0.6ex]
\arrayrulecolor{black!5!white}\midrule
\addlinespace[0.9ex]
$\frac{s + X}{\log(n)^{3/2}}$, & $\hspace{-0.7em}\scriptstyle{}X \sim \text{Exp}(\lambda)$
  & $h^\invf\hspace{-0.1em}\left(\frac{\lambda^2}{2 \e^{s \lambda - 1}}\right) \!+\! 1$
  & $\e^{s \lambda - 1} \mspace{1mu} r$\\
\addlinespace[0.9ex]
& $\hspace{-0.7em}\scriptstyle{}\lambda=1,\; s=0.1$
  & $2.939$ & $0.407 \cdot r$\\
\addlinespace[0.6ex]
\arrayrulecolor{black}\bottomrule
\end{tabularx}
%\end{table}
%\begin{table}[H]
\vspace{40pt}
\centering
\caption{\hyphenpenalty=10000%
The threshold scaling $\scaling[C]$ required to detect a planted community $C$ that is planted uniformly at random (based on setting the approximation in \eqref{eq:optimal_subgraph_condition_simplified} equal to $1$ and then solving for $\scaling[C]$). We provide the analytic results for community size $r = \lfloor n^{1/4} \log(n)^4 \rfloor$. Note that, the threshold scaling $\scaling[C]$ is equal to $1 + \bigTheta(n^{-1/8})$ in these examples because $h(x) \asymp x^2 / 2$ as $x \to 0$.}
\label{tbl:random_weights_example_analytic2}
\begin{tabularx}{0.92\textwidth}{p{14mm} p{20mm} X p{16mm}}
\arrayrulecolor{black}\toprule
\multicolumn{2}{l}{\hspace{15mm}$W$} & \multicolumn{1}{l}{Threshold $\scaling[C]$} & \multicolumn{1}{l}{$|\optimalsubgraph|$}\\
\arrayrulecolor{black}\midrule
\addlinespace[0.9ex]
$\frac{s + X}{\log(n)^{3/2}}$, & $\hspace{-0.7em}\scriptstyle{}X \sim \text{Degen}(\delta)$
  & $h^\invf\hspace{-0.1em}\left(\frac{1}{n^{1/4}} \mspace{1mu} \frac{2}{(s+1)^2}\right) \!+\! 1$
  & $r$\\
\addlinespace[0.6ex]
\arrayrulecolor{black!5!white}\midrule
\addlinespace[0.9ex]
$\frac{s + t X}{\log(n)^{3/2}}$, & $\hspace{-0.7em}\scriptstyle{}X \sim \text{Bern}(q)$
  & $h^\invf\hspace{-0.1em}\left(\frac{1}{n^{1/4}} \left(\frac{2}{q(s+t)^2} \!\medwedge\! \frac{2}{(s + q t)^2}\right)\!\right) \!+\! 1$
  & $q \mspace{1mu} r \;\;\text{or}\;\; r$\\
\addlinespace[0.6ex]
\arrayrulecolor{black!5!white}\midrule
\addlinespace[0.9ex]
$\frac{s + X}{\log(n)^{3/2}}$, & $\hspace{-0.7em}\scriptstyle{}X \sim \text{Unif}(a, b)$
  & $h^\invf\hspace{-0.1em}\left(\frac{1}{n^{1/4}} \mspace{1mu} \frac{27}{4} \mspace{1mu} \frac{b - a}{(b + s)^3}\right) \!+\! 1$
  & $\frac{2}{3} \mspace{1mu} \frac{b+s}{b-a} \mspace{1mu} r$\\
\addlinespace[0.6ex]
\arrayrulecolor{black!5!white}\midrule
\addlinespace[0.9ex]
$\frac{s + X}{\log(n)^{3/2}}$, & $\hspace{-0.7em}\scriptstyle{}X \sim \text{Exp}(\lambda)$
  & $h^\invf\hspace{-0.1em}\left(\frac{1}{n^{1/4}} \mspace{1mu} \frac{\lambda^2}{2 \e^{s \lambda - 1}}\right) \!+\! 1$
  & $\e^{s \lambda - 1} \mspace{1mu} r$\\
\addlinespace[0.6ex]
\arrayrulecolor{black}\bottomrule
\end{tabularx}
\end{table}

%%%%%%%%%%%%%%%%%%%%%%%%%%%%%%%%%%%%%%%%%%%%%%%%%%%%%%%%%%%%%%%%%%%%%%%%%%%%%%%%
%%%%%%%%%%%%%%%%%%%%%%%%%%%%%%%%%%%%%%%%%%%%%%%%%%%%%%%%%%%%%%%%%%%%%%%%%%%%%%%%
%%%%%%%%%%%%%%%%%%%%%%%%%%%%%%%%%%%%%%%%%%%%%%%%%%%%%%%%%%%%%%%%%%%%%%%%%%%%%%%%
\section{Discussion}
\label{sec:discussion}
In this section we remark on our results and discuss some possibilities for future work.

\vspace{-2pt}
\paragraph{Alternatives to the scan test.}
When the community size $|C| = r$ becomes much larger than allowed by Assumption \ref{ass:lower_bound_maximum_inhomogeneity} or \ref{ass:lower_bound_maximum_community_small}, that is $r \geq \sqrt{n}$, then the scan test is no longer optimal. This was considered by Arias-Castro and Verzelen for an Erd\H{o}s-R\'{e}nyi random graph \cite{Arias-Castro2014}, where they show that for large communities, a statistic based on simply counting the total number of edges is optimal. A similar idea can also be applied in the inhomogeneous settings. This suggests that such a test is asymptotically powerful, if for all $C \subseteq V$ of size $|C| = r$,
\begin{equation}
\frac{\E_C[e(C)] - \E_0[e(C)]}{\sqrt{\E_0[e(V)]}} \to \infty \,.
\end{equation}
Alternatively, when the communities become extremely large such that $r = \bigTheta(n)$ then our model becomes a version of the degree corrected stochastic block model \cite{Karrer2011}. In this case, it might be beneficial to consider tests based on spectral methods \cite{Massoulie2014,Gulikers2017,Mossel2018,Bordenave2018}.

Another setting where the scan test is no longer optimal is when the underlying graph is very sparse. In this case, one could consider tests similar to those considered by Arias-Castro and Verzelen \cite{Arias-Castro2015}.

\vspace{-2pt}
\paragraph{Unknown community size.}
When presenting our results, we have always assumed that the size of the planted community is known. In practice, this is often not the case and it would be necessary to estimate the community size before testing. In our case, the scan test can easily be extended to the setting of unknown community size. To see this, note that the scan test can detect any planted community provided that it is not larger than $r$. Hence, one can simply use the scan test with a large enough value for $r$ and it will detect a planted community of size at most $r$.

\vspace{-2pt}
\paragraph{Beyond the rank-1 case.}
In Section~\ref{subsec:scan_test_for_unknown_rank1_edge_probabilities} we consider unknown edge probabilities by additionally assuming a rank-1 structure. This can likely be generalized to edge probabilities that have different structural assumptions, provided Assumption~\ref{ass:unknown_probability_maximum_inhomogeneity} is suitably adjusted. The main difficulty in obtaining a result similar to Theorem~\ref{thm:unknown_probability_scan_test_powerful} would then be to find an estimator for $\E_0[e(C)]$ and show a consistency result similar to Lemma~\ref{lem:estimator_taylor_expansion}. Such a result will depend heavily on the precise structural assumptions made.

\vspace{-2pt}
\paragraph{Relaxation of Assumptions~\ref{ass:lower_bound_maximum_inhomogeneity}, \ref{ass:lower_bound_maximum_community_small}, and \ref{ass:lower_bound_sparsity}.}
All assumptions needed to prove the information theoretic lower bound in Section~\ref{subsec:information_theoretic_lower_bound} require that certain conditions hold for all sets $C \subseteq V$ of size $|C| = r$. This can be slightly relaxed because it is only necessary that these conditions hold for most sets $C \subseteq V$. Specifically, there needs to exists a class $\mathcal{C}$ such that the conditions in Assumptions~\ref{ass:lower_bound_maximum_inhomogeneity}, \ref{ass:lower_bound_maximum_community_small}, and \ref{ass:lower_bound_sparsity} hold for all $C \in \mathcal{C}$ and $\bar{\P}(C \in \mathcal{C}) \to 1$, where $\bar{\P}(\cdot)$ denotes probability with respect to a uniformly chosen set $C \subseteq V$ of size $|C| = r$. To see this, one only needs to modify the truncation event in \eqref{eq:truncation_event} to also include all sets $C \notin \mathcal{C}$. That is, one needs to modify the truncation event to $\truncation[C]' = \truncation[C] \cup \{C \notin \mathcal{C}\}$, where $\truncation[C]$ is the original truncation event from \eqref{eq:truncation_event}.

\vspace{-2pt}
\paragraph{Computational complexity.}
In general, the computational complexity of scan tests is not polynomial in the graph size $n$. In the homogeneous settings, it has been conjectured that polynomial time algorithms are not able to achieve the minimax rate~\cite{Hajek2015}. Inhomogeneity in the graphs can make computations easier -- for instance in very inhomogeneous cases it is possible to recover the largest clique of a graph in polynomial time \cite{Friedrich2015}. It thus remains an interesting avenue for future work to thoroughly characterize the statistical limits of tests under computational constraints.

%%%%%%%%%%%%%%%%%%%%%%%%%%%%%%%%%%%%%%%%%%%%%%%%%%%%%%%%%%%%%%%%%%%%%%%%%%%%%%%%
%%%%%%%%%%%%%%%%%%%%%%%%%%%%%%%%%%%%%%%%%%%%%%%%%%%%%%%%%%%%%%%%%%%%%%%%%%%%%%%%
%%%%%%%%%%%%%%%%%%%%%%%%%%%%%%%%%%%%%%%%%%%%%%%%%%%%%%%%%%%%%%%%%%%%%%%%%%%%%%%%
\mathtoolsset{showonlyrefs} % Form here on, only show referenced equations.
\section{Proofs}
\label{sec:proofs}
In this section we prove our results. We start with the proof of Theorem~\ref{thm:known_probability_scan_test_powerful} because it is the simplest and it sets the stage for some of the arguments in the proof of Theorem~\ref{thm:unknown_probability_scan_test_powerful}. We end this section with the proof of Theorem~\ref{thm:lower_bound}, which shows that the results obtained in Theorem~\ref{thm:known_probability_scan_test_powerful} and Theorem~\ref{thm:unknown_probability_scan_test_powerful} are, roughly speaking, the best possible.

%%%%%%%%%%%%%%%%%%%%%%%%%%%%%%%%%%%%%%%%%%%%%%%%%%%%%%%%%%%%%%%%%%%%%%%%%%%%%%%%
%%%%%%%%%%%%%%%%%%%%%%%%%%%%%%%%%%%%%%%%%%%%%%%%%%%%%%%%%%%%%%%%%%%%%%%%%%%%%%%%
\subsection{Proof of Theorem~\ref{thm:known_probability_scan_test_powerful}: Scan test for known edge probabilities is powerful}
\label{subsec:proof_of_scan_test_for_known_edge_probabilities_is_powerful}
\begin{proof}[\unskip\nopunct]
In this section we prove that the scan test in \eqref{eq:known_probability_scan_test_statistic} is asymptotically powerful. That is, under the conditions of the theorem, both type-I and type-II errors vanish.

\paragraph{Type-I error.} We will show that $\P_0(\statkp \geq 1 + \epsilon/2) \to 0$. This is done through a relatively straightforward use of Bennett's inequality and the union bound. Using $\binom{n}{k} \leq \smash{\left(\frac{n\,\e}{k}\right)^k}$, it follows that
\begin{align}
\hspace{40pt}&\hspace{-40pt}
\P_0\left(\statkp \geq 1 + \frac{\epsilon}{2}\right)
  = \P_0\left(\max_{D \subseteq V,\, |D| \leq r} \statkp[D] \geq 1 + \frac{\epsilon}{2}\right)\\
  &= \P_0\Biggl(\max_{1 \leq k \leq r} \max_{D \subseteq V,\, |D| = k} \frac{\E_0[e(D)] \, h\left(\bigl[e(D) / \E_0[e(D)] - 1\bigr]_{+}\right)}{k \log(n / k)} \geq 1 + \frac{\epsilon}{2}\Biggr)\\
  &\leq \sum_{1 \leq k \leq r} \sum_{D \subseteq V, |D| = k} \P_0\Biggl(\frac{\E_0[e(D)] \, h\left(\bigl[e(D) / \E_0[e(D)] - 1\bigr]_{+}\right)}{k \log(n / k)} \geq 1 + \frac{\epsilon}{2}\Biggr)\\
  &\leq \sum_{1 \leq k \leq r} \binom{n}{k} \exp\left(-\left(1+\frac{\epsilon}{2}\right) k \log\left(\frac{n}{k}\right)\right)\\
  &\leq \sum_{1 \leq k \leq r} \left(\e \left(\frac{k}{n}\right)^{\epsilon/2}\right)^k\\
  &\leq \frac{\e \left(\frac{r}{n}\right)^{\epsilon/2}}{1 - \e \left(\frac{r}{n}\right)^{\epsilon/2}}
  \to 0 \,.
\end{align}
The first and second inequality follow from a simple union bound and Bennett's inequality given in \eqref{eq:bennett_edge_bound_alternative}. The final step relies on the fact that $k / n \leq r / n$ and $r = \smallO(n)$. Therefore we conclude that the scan test \eqref{eq:known_probability_scan_test_statistic} has vanishing type-I error.

\paragraph{Type-II error.} Showing that we have vanishing type-II error starts by realizing that $\P_C(\statkp \geq 1 + \epsilon / 2) \geq \P_C(\statkp[{\optimalsubgraph[C]}] \geq 1 + \epsilon / 2)$, for every $C \subseteq V$ of size $|C| = r$, where $\optimalsubgraph[C]$ was introduced in Definition~\ref{def:optimal_subset_argmax}. The rest of the proof entails showing that for every $C \subseteq V$ of size $|C| = r$,
\begin{equation}
\label{eq:asymptotic_equivalence}
\statkp[{\optimalsubgraph[C]}] \geq (1 + \smallOp[\P_C](1)) \, \frac{\E_0[e(\optimalsubgraph[C])] \, h(\scaling[C] - 1)}{|\optimalsubgraph[C]| \, \log(n / |\optimalsubgraph[C]|)} \,.
\end{equation}
Together with \eqref{eq:known_probability_scan_test_powerful_condition} this implies that, for every $C$, we have $\P_C(\statkp[{\optimalsubgraph[C]}] \geq 1 + \epsilon / 2)\to 1$.

Let $C \subseteq V$ be an arbitrary subgraph of size $|C| = r$ and recall $\optimalsubgraph \coloneqq \optimalsubgraph[C]$ from Definition~\ref{def:optimal_subset_argmax} (we drop the explicit dependence of $\optimalsubgraph$ on $C$ to avoid notational clutter). To prove \eqref{eq:asymptotic_equivalence} it suffices to show that
\begin{equation}
\label{eq:scan_test_asymptotics}
\E_0[e(\optimalsubgraph)] \, h\!\left(\bigl[e(\optimalsubgraph) / \E_0[e(\optimalsubgraph)] - 1\bigr]_{+}\right)
  \geq (1 + \smallOp[\P_C](1)) \, \E_0[e(\optimalsubgraph)] \, h\left(\scaling[C] - 1\right) \,.
\end{equation}
To see this, note that $x \mapsto h(x-1)$ is convex, with derivative $h'(x-1) = \log(x)$ and therefore $h(x - 1) \geq h(y - 1) + (x - y) \log(y)$. Using this, together with $x = e(\optimalsubgraph) / \E_0[e(\optimalsubgraph)]$ and $y = \E_C[e(\optimalsubgraph)] / \E_0[e(\optimalsubgraph)] = \scaling[C] > 1$, we obtain the lower bound
\begin{align}
\label{eq:alternative_edge_count_taylor}
\hspace{40pt}&\hspace{-40pt}
\E_0[e(\optimalsubgraph)]\, h\!\left(\left[\frac{e(\optimalsubgraph)}{\E_0[e(\optimalsubgraph)]}-1\right]_{\!+}\right) - \E_0[e(\optimalsubgraph)]\, h\left(\scaling[C]-1\right)\\
  &= \E_0[e(\optimalsubgraph)]\, h\!\left(\left[\frac{e(\optimalsubgraph)}{\E_0[e(\optimalsubgraph)]}-1\right]_{\!+}\right) - \E_0[e(\optimalsubgraph)]\, h\!\left(\frac{\E_C[e(\optimalsubgraph)]}{\E_0[e(\optimalsubgraph)]}-1\right)\\
  &\geq \bigl(e(\optimalsubgraph) - \E_C[e(\optimalsubgraph)]\bigr) \log\left(\frac{\E_C[e(\optimalsubgraph)]}{\E_0[e(\optimalsubgraph)]}\right)\\
  &= \bigl(e(\optimalsubgraph) - \E_C[e(\optimalsubgraph)]\bigr) \log\left(\scaling[C]\right) \,.
\end{align}
It follows by Chebyshev's inequality that
\begin{equation}
\label{eq:alternative_edge_count_chebyshev}
\bigl(e(\optimalsubgraph) - \E_C[e(\optimalsubgraph)]\bigr) \log\left(\scaling[C]\right)
  = \bigO_{\P_C}\!\left(\sqrt{\E_C[e(\optimalsubgraph)]} \log(\scaling[C])\right) \,.
\end{equation}
Therefore, the inequality in \eqref{eq:scan_test_asymptotics} holds when
\begin{equation}
\label{eq:alternative_edge_count_deviations}
\frac{\sqrt{\E_C[e(\optimalsubgraph)]} \log(\scaling[C])}{\E_0[e(\optimalsubgraph)] h\left(\scaling[C]-1\right)} = \smallO(1) \,.
\end{equation}
To show this, we consider three cases depending on the asymptotic behavior of $\scaling[C]$. Although these three cases do not cover all possibilities, they suffice, by the argument in Remark~\ref{rem:subsubsequence} below.

\paragraph{\normalfont\emph{Case 1} ($\scaling[C] \to 1$):}
Using $\sqrt{x} \log(x) \asymp (x - 1)$ as $x \to 1$, and $h(x - 1) \asymp (x - 1)^2 / 2$ as $x \to 1$ gives
\begin{align}
\sqrt{\E_C[e(\optimalsubgraph)]} \log(\scaling[C])
  &= (1 + \smallO(1)) \, \sqrt{\E_0[e(\optimalsubgraph)]} (\scaling[C] - 1) \,,\\
\intertext{and}
\E_0[e(\optimalsubgraph)] h(\scaling[C] - 1)
  &= (1 + \smallO(1)) \, \E_0[e(\optimalsubgraph)] (\scaling[C] - 1)^2 / 2 \,.
\end{align}
Hence, by \eqref{eq:known_probability_scan_test_powerful_condition} we have
$$\E_0[e(\optimalsubgraph)] (\scaling[C] - 1)^2 \asymp 2 \E_0[e(\optimalsubgraph)] h(\scaling[C] - 1) > 2 |\optimalsubgraph| \log(n / |\optimalsubgraph|) \to \infty\, .$$
Combining the above gives
\begin{equation}
\label{eq:alternative_edge_count_deviations_case1}
\frac{\sqrt{\E_C[e(\optimalsubgraph)]} \log(\scaling[C])}{\E_0[e(\optimalsubgraph)] h(\scaling[C] - 1)}
  = (1 + \smallO(1)) \, \frac{2}{\sqrt{\E_0[e(\optimalsubgraph)] (\scaling[C] - 1)^2}}
  = \smallO(1) \,.
\end{equation}
This shows that \eqref{eq:alternative_edge_count_deviations} holds when $\scaling[C] \to 1$.

\paragraph{\normalfont\emph{Case 2} ($\scaling[C] \to \alpha \in (1, \infty)$):}
In this case $\sqrt{\scaling[C]} \log(\scaling[C]) = \bigO\left(h(\scaling[C] - 1)\right)$, and by \eqref{eq:known_probability_scan_test_powerful_condition} we have $\E_0[e(\optimalsubgraph)] h(\scaling[C] - 1) \geq |\optimalsubgraph| \log(n / |\optimalsubgraph|) \to \infty$. Therefore
\begin{equation}
\label{eq:alternative_edge_count_deviations_case2}
\sqrt{\E_C[e(\optimalsubgraph)]} \log(\scaling[C])
  = \sqrt{\E_0[e(\optimalsubgraph)]} \, \sqrt{\scaling[C]} \log(\scaling[C])
  = \smallO(\E_0[e(\optimalsubgraph)] h(\scaling[C] - 1)) \,.
\end{equation}
This shows that \eqref{eq:alternative_edge_count_deviations} holds when $\scaling[C] \to \alpha \in (1, \infty)$.

\paragraph{\normalfont\emph{Case 3} ($\scaling[C] \to \infty$):}
Using $h(x - 1) \asymp x \log(x)$ as $x \to \infty$ and because $\E_C[e(\optimalsubgraph)] \to \infty$ we have
\begin{equation}
\label{eq:alternative_edge_count_deviations_case3}
\frac{\sqrt{\E_C[e(\optimalsubgraph)]} \log(\scaling[C])}{\E_0[e(\optimalsubgraph)] h(\scaling[C] - 1)}
  = \frac{1}{\sqrt{\E_C[e(\optimalsubgraph)]}} = \smallO(1) \,.
\end{equation}
This shows that \eqref{eq:alternative_edge_count_deviations} holds when $\scaling[C] \to \infty$, and therefore that \eqref{eq:scan_test_asymptotics} holds in all the three cases.

\begin{remarkx}[General {$\scaling[C]$} sequences]
\label{rem:subsubsequence}
Note that $\scaling[C]$ might not fit one of the above cases, but may rather oscillate between a combination of the three. However, this is not a problem. For every subsequence of $\scaling[C]$, there exists a further subsequence along which the scaling $\scaling[C]$ satisfies one of the three cases. Hence, \eqref{eq:alternative_edge_count_deviations} holds along this (further) subsequence, which implies that \eqref{eq:alternative_edge_count_deviations} also holds along the full sequence. This type of argument will be used in several more places in the proofs.
\end{remarkx}

The proof of Theorem~\ref{thm:known_probability_scan_test_powerful} is now easily completed using \eqref{eq:asymptotic_equivalence} together with \eqref{eq:known_probability_scan_test_powerful_condition}. For every $C \subseteq V$ of size $|C| = r$,
\begin{align}
\statkp
  \geq \statkp[\optimalsubgraph]
  &= \frac{\E_0[e(\optimalsubgraph)] \, h\Bigl(\bigl[e(\optimalsubgraph) / \E_0[e(\optimalsubgraph)] - 1\bigr]_{+}\Bigr)}{|\optimalsubgraph| \, \log(n / |\optimalsubgraph|)}\\
  &\geq (1 + \smallOp[\P_C](1)) \, \frac{\E_0[e(\optimalsubgraph)] \, h\bigl([\scaling[C] - 1]_{+}\bigr)}{|\optimalsubgraph| \, \log(n / |\optimalsubgraph|)}\\
  &\geq (1 + \smallOp[\P_C](1)) (1 + \epsilon) \,.
\end{align}
Hence, $\P_C\left(\statkp \geq 1 + \epsilon/2\right) \to 1$. This shows that the type-II error vanishes, completing the proof.
\end{proof}

%%%%%%%%%%%%%%%%%%%%%%%%%%%%%%%%%%%%%%%%%%%%%%%%%%%%%%%%%%%%%%%%%%%%%%%%%%%%%%%%
%%%%%%%%%%%%%%%%%%%%%%%%%%%%%%%%%%%%%%%%%%%%%%%%%%%%%%%%%%%%%%%%%%%%%%%%%%%%%%%%
\subsection{Proof of Theorem~\ref{thm:unknown_probability_scan_test_powerful}: Scan test for unknown rank-1 edge probabilities is powerful}
\label{subsec:proof_of_scan_test_for_unknown_rank1_edge_probabilities_is_powerful}
In this section we prove that the scan test in \eqref{eq:unknown_probability_scan_test_statistic} is asymptotically powerful, but we first derive some auxiliary results. The first of these shows that if a planted community can be detected then it can be detected based on the evidence of the subgraph $\optimalsubgraph[C]$ from Definition~\ref{def:optimal_subset_argmax}. Moreover, by Assumption~\ref{ass:unknown_probability_maximum_inhomogeneity} it follows that $\optimalsubgraph[C]$ must be relatively large. Specifically, we show that $|\optimalsubgraph[C]| \geq r^{1/3}$. This explains why the scan test in \eqref{eq:unknown_probability_scan_test_statistic} is defined to only scan over subgraphs larger than $r^{1/3}$.
\begin{lemma}
\label{lem:optimal_subset_size}
For any $C \subseteq V$ of size $|C| = r$, let $\optimalsubgraph[C]$ be as given in Definition~\ref{def:optimal_subset_argmax}. When Assumption~\ref{ass:unknown_probability_maximum_inhomogeneity} holds then $|\optimalsubgraph[C]| \geq r^{1/3}$.
\end{lemma}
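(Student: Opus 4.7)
The plan is to show that for every $D \subseteq C$ with $|D| < r^{1/3}$, the objective $f(D) := \E_0[e(D)] / (|D|\log(n/|D|))$ is strictly smaller than $f(C)$. Since $C$ itself is one of the competitors in the argmax defining $\optimalsubgraph[C]$, this directly rules out any subgraph smaller than $r^{1/3}$ from being a maximizer and hence yields the claim.

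The first step is to use the rank-1 structure $p_{ij} = \theta_i\theta_j$ together with the weight bounds to sandwich the expected edge counts: $\E_0[e(D)] \leq \binom{|D|}{2}\wmax^2$ for any $D \subseteq C$, and $\E_0[e(C)] \geq \binom{r}{2}\wmin^2$. The second step is to eliminate the logarithmic factor: because $|D| \leq r$ one has $\log(n/|D|) \geq \log(n/r)$, so the ratio of logs works in our favor and can be discarded. Combining these two steps produces the clean inequality
\[
\frac{f(D)}{f(C)} \;\leq\; \frac{|D|}{r-1}\left(\frac{\wmax}{\wmin}\right)^{\!2}.
\]

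The third step invokes Assumption~\ref{ass:unknown_probability_maximum_inhomogeneity}, which supplies $(\wmax/\wmin)^2 = \smallO(r^{2/3})$. For any $|D| < r^{1/3}$ this yields $f(D)/f(C) \leq r^{1/3}(r-1)^{-1}\smallO(r^{2/3}) = \smallO(1)$, so $f(D) < f(C)$ for all large enough $n$. This also explains why the exponent $2/3$ appears in the assumption: it is precisely calibrated so that the product $|D|\,(\wmax/\wmin)^2$ remains $\smallO(r)$ uniformly over $|D| < r^{1/3}$, matching the lower bound claimed by the lemma.

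I do not foresee a real obstacle. The only minor care needed is the degenerate case $|D| = 1$, where $\E_0[e(D)] = 0$ so $f(D) = 0 < f(C)$ trivially, and an implicit use of $r \to \infty$ and $r = \smallO(n)$ to ensure $\log(n/r) > 0$ and $\binom{r}{2} \asymp r^2/2$; both are granted by the standing setup of the paper.
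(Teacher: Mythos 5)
Your proof is correct and is essentially the same as the paper's: both bound $\E_0[e(D)]$ from above by $\binom{|D|}{2}\wmax^2$ and $\E_0[e(C)]$ from below by $\binom{r}{2}\wmin^2$, control the logarithmic factor via $|D| \leq |C|$, and invoke $(\wmax/\wmin)^2 = \smallO(r^{2/3})$ from Assumption~\ref{ass:unknown_probability_maximum_inhomogeneity} to conclude. The only cosmetic difference is that you phrase the comparison as a single ratio $f(D)/f(C) = \smallO(1)$, whereas the paper writes it as a chain of inequalities between the two objectives.
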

\begin{proof}
We use a proof by contradiction. For any $D \subseteq V$ of size $|D| \leq r^{1/3}$, it follows by Assumption~\ref{ass:unknown_probability_maximum_inhomogeneity} that
\begin{align}
\frac{\E_0[e(D)]}{|D| \log(n / |D|)}
  &\leq \frac{|D| - 1}{2} \, \frac{\wmax^2}{\log(n / |D|)}\\
  &\leq \frac{\smallO(r)}{2} \, \frac{\wmin^2}{\log(n / r^{1/3})}\\
  &< \frac{|C| - 1}{2} \, \frac{\wmin^2}{\log(n / |C|)}\\
  &\leq \frac{\E_0[e(C)]}{|C| \log(n / |C|)} \,.
\end{align}
Hence, a subset $D \subseteq V$ of size $|D| \leq r^{1/3}$ does not maximize the right-hand side of \eqref{eq:optimal_subset_argmax}, and therefore $|\optimalsubgraph[C]| \geq r^{1/3}$.
\end{proof}

In the second auxiliary result we quantify the deviations of $\est[D]$ around $\E_0[e(D)]$. We note that the lemma below remains true when all $(1 + \smallOp[\P_0](1))$ terms are replaced by $(1 + \smallOp[\P_C](1))$ terms. So, this results holds under both the null and alternative hypothesis. This crucial property is key to ensure that we can deal with unknown edge probabilities.
\begin{lemma}
\label{lem:estimator_taylor_expansion}
Let $\mathcal{D}$ be a set of subsets of the vertices $V$, such that $r^{1/3}\leq |D|\leq r$ for all $D \in \mathcal{D}$. Under Assumption~\ref{ass:unknown_probability_maximum_inhomogeneity} and
\begin{align}
e(V)
  &= (1 + \smallOp[\P_0](1))\E_0[e(V)] \,,\\
e(D,-D)
  &= (1 + \smallOp[\P_0](1))\E_0[e(D,-D)] \,,
  &\qquad \text{uniformly over all } D \in \mathcal{D} \,.
\intertext{the deviations of $\est[D]$ around $\E_0[e(D)]$ satisfy}
\label{eq:estimator_taylor_expansion}
\frac{\est[D]}{\E_0[e(D)]}
  &= 1 + \smallOp[\P_0](1) \,,
  &\qquad \text{uniformly over all } D \in \mathcal{D} \,.
\end{align}
Additionally, the statement above remains true when all $(1 + \smallOp[\P_0](1))$ terms are replaced by $(1 + \smallOp[\P_C](1))$ terms.
%\kay[inline]{The alternative lemma statement (with all $(1 + \smallOp[\P_0](1))$ terms are replaced by $(1 + \smallOp[\P_C](1))$ terms) is:
%
%Given a set of subsets of vertices $\mathcal{D}$. If Assumption~\ref{ass:unknown_probability_maximum_inhomogeneity} holds and
%\begin{align}
%e(V) &= (1 + \smallOp[\P_C](1))\E_0[e(V)] \,,\\
%e(D,-D) &= (1 + \smallOp[\P_C](1))\E_0[e(D,-D)] \,,
%\qquad \text{uniformly over all } D \in \mathcal{D} \,.
%\end{align}
%Then the deviations of $\est[D]$ around $\E_0[e(D)]$ are given by
%\begin{equation}
%\frac{\est[D]}{\E_0[e(D)]}
%  = (1 + \smallOp[\P_C](1)) \,,
%\qquad \text{uniformly over all } D \in \mathcal{D} \,.
%\end{equation}}
\end{lemma}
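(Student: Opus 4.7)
The plan is to compactify identity \eqref{eq:edge_count_identity} by writing $g(x, y) \coloneqq (\sqrt{x} - \sqrt{x - 2y})^2/4$, $S = \sum_i \w_i$, $S_D = \sum_{i \in D}\w_i$, $T = \sum_i \w_i^2$, and $t = \sum_{i \in D}\w_i^2$. Using $\E_0[e(V)] + T/2 = S^2/2$ and $\E_0[e(D,-D)] = S_D(S - S_D)$, a direct computation gives $g(S^2/2, S_D(S - S_D)) = S_D^2/2$ (when $S > 2 S_D$), so \eqref{eq:edge_count_identity} reduces to the algebraic fact $\E_0[e(D)] = (S_D^2 - t)/2$, and the estimator is simply $\est[D] = g(e(V), e(D,-D))$. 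The idea is to write $\est[D] - \E_0[e(D)]$ as the sum of a \emph{stochastic piece} $g(e(V), e(D,-D)) - g(\E_0[e(V)], \E_0[e(D,-D)])$, a deterministic \emph{$T$-bias} $g(\E_0[e(V)], \E_0[e(D,-D)]) - g(S^2/2, S_D(S - S_D))$, and a deterministic \emph{$t$-bias} $t/2$, and to show that each of the three is $\smallOp[\P_0](\E_0[e(D)])$ uniformly in $D \in \mathcal{D}$.

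For the stochastic piece, I would Taylor-expand $g$ around $(\E_0[e(V)], \E_0[e(D,-D)])$. On the relevant domain, $g(x, y) \asymp y^2/x$, with $\partial_y g = \bigO(y/x)$ and $\partial_x g = \bigO(y^2/x^2)$. The given concentration then yields a first-order expansion whose error is $\smallOp[\P_0](g(\E_0[e(V)], \E_0[e(D,-D)])) = \smallOp[\P_0](S_D^2)$ uniformly in $D$, which is $\smallOp[\P_0](\E_0[e(D)])$ once I establish $t = \smallO(S_D^2)$ in the next step.

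For the deterministic biases, Taylor expansion of $g$ in its first argument at $(S^2/2, S_D(S - S_D))$ yields $|g(\E_0[e(V)], \cdot) - g(S^2/2, \cdot)| = \bigO(T S_D^2/S^2)$, so its ratio to $\E_0[e(D)] \asymp S_D^2/2$ is of order $T/S^2 \leq (\wmax/\wmin)^2/n$, which is $\smallO(1)$ by Assumption~\ref{ass:unknown_probability_maximum_inhomogeneity} combined with $r = \smallO(n)$. The $t$-bias is the delicate one: the naive bound $t \leq |D|\wmax^2$ only gives $t/S_D^2 \leq (\wmax/\wmin)^2/|D|$, which is $\smallO(r^{1/3})$ --- \emph{not} $\smallO(1)$ --- when $|D| \geq r^{1/3}$. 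The correct argument uses the sharper estimate $t = \sum_{i \in D}\w_i^2 \leq \wmax \sum_{i \in D}\w_i = \wmax S_D$ together with $S_D \geq |D|\wmin$, which gives $t/S_D^2 \leq (\wmax/\wmin)/|D|$; combined with $|D| \geq r^{1/3}$ and $(\wmax/\wmin)^2 = \smallO(r^{2/3})$ from Assumption~\ref{ass:unknown_probability_maximum_inhomogeneity}, this becomes $\smallO(r^{1/3})/r^{1/3} = \smallO(1)$. This simultaneously justifies the approximation $\E_0[e(D)] \asymp S_D^2/2$ and controls the $t$-bias.

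I expect this last step to be the main obstacle, since the exponent $2/3$ in Assumption~\ref{ass:unknown_probability_maximum_inhomogeneity} is tight precisely for the sharper bound $t \leq \wmax S_D$ to give vanishing relative error; the coarse bound $t \leq |D|\wmax^2$ would force an exponent of $1/3$ instead. To obtain the $\P_C$ version, I would note that $|\E_C[e(V)] - \E_0[e(V)]| \leq (\scaling[C] - 1)\E_0[e(C)]$, which is negligible compared to $\E_0[e(V)] \asymp S^2/2$ under Assumption~\ref{ass:unknown_probability_maximum_inhomogeneity} combined with $r = \smallO(n)$, and a similar estimate handles $e(D,-D)$. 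Consequently the concentration hypotheses on $e(V)$ and $e(D,-D)$ transfer from $\P_0$ to $\P_C$, and the same chain of arguments produces $\est[D]/\E_0[e(D)] = 1 + \smallOp[\P_C](1)$ uniformly in $D \in \mathcal{D}$.
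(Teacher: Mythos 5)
Your proof is correct and follows essentially the same route as the paper: Taylor-expand the estimator function around $(\E_0[e(V)], \E_0[e(D,-D)])$, use the given concentration to control the stochastic term, and show the two deterministic biases from $\sum_{i\in V}\w_i^2$ and $\sum_{i\in D}\w_i^2$ are negligible under Assumption~\ref{ass:unknown_probability_maximum_inhomogeneity}. Your derivation of the $t$-bias via $\sum_{i\in D}\w_i^2 \leq \wmax\sum_{i\in D}\w_i$ is a cleaner route to the same bound $\frac{\sum_{i\in D}\w_i^2}{(\sum_{i\in D}\w_i)^2} \leq \frac{\wmax}{|D|\,\wmin}$ that the paper obtains by an explicit optimization over weight configurations, and you correctly identify that this is exactly where the exponent $2/3$ is tight.
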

\begin{proof}
Define $f(x_1,x_2) \coloneqq \left(\sqrt{x_1}-\sqrt{x_1-2x_2}\right)^2$ for $x_1 \geq 2 x_2$. Then the partial derivatives of $f(x_1, x_2)$ are given by
\begin{align}
\label{eq:f_gradient}
\frac{\partial f}{\partial x_1}(x_1, x_2)
  &= -\frac{\left(\sqrt{x_1}-\sqrt{x_1-2x_2}\right)^2}{\sqrt{x_1\vphantom{2}}\sqrt{x_1-2x_2}}
  = - \frac{f(x_1, x_2)}{\sqrt{x_1\vphantom{2}}\sqrt{x_1-2x_2}} \,,\\
\frac{\partial f}{\partial x_2}(x_1, x_2)
  &= 2\,\frac{\sqrt{x_1}-\sqrt{x_1-2x_2}}{\sqrt{x_1-2x_2}}
  = \frac{2 \, f(x_1, x_2)}{\sqrt{x_1-2x_2} \left(\sqrt{x_1}-\sqrt{x_1-2x_2}\right)} \,.
\end{align}
We use a Taylor expansion of $f(x_1, x_2)$ around $(a_1, a_2)$ with $a_1>2a_2$. Specifically, there exists $(\xi_1, \xi_2)$ with $\xi_1$ in between $x_1$ and $a_1$, and $\xi_2$ in between $x_2$ and $a_2$, such that
\begin{equation}
\label{eq:f_taylor}
f(x_1, x_2)
  = f(a_1, a_2) + \frac{\partial f}{\partial x_1}(\xi_1, \xi_2) \, (x_1 - a_1)
    + \frac{\partial f}{\partial x_2}(\xi_1, \xi_2) \, (x_2 - a_2) \,.
\end{equation}
We use \eqref{eq:f_taylor} with $(x_1, x_2) = (e(V), e(D,-D))$ and $(a_1, a_2) = (\E_0[e(V)], \E_0[e(D,-D)])$. Because $e(V) = (1 + \smallOp[\P_0](1))\E_0[e(V)]$ by assumption, it follows that for any $\xi_1$ between $e(V)$ and $\E_0[e(V)]$ we have $\xi_1 = (1 + \smallOp[\P_0](1))\E_0[e(V)]$. Similarly, by assumption we have $e(D,-D) = (1 + \smallOp[\P_0](1))\E_0[e(D,-D)]$ uniformly over all $D \in \mathcal{D}$, and therefore it follows that $\xi_2 = (1 + \smallOp[\P_0](1))\E_0[e(D,-D)]$. Hence,
\begin{align}
\label{eq:f_taylor_applied}
\hspace{10pt}&\hspace{-10pt}
\frac{f(e(V), e(D,-D))}{f(\E_0[e(V)], \E_0[e(D,-D)])}\\
  &= \begin{multlined}[t][.9\displaywidth]
    1 - \frac{(1 + \smallOp[\P_0](1)) \, (e(V) - \E_0[e(V)])}{\sqrt{\E_0[e(V)]}\sqrt{\E_0[e(V)] - 2\E_0[e(D,-D)]}}\\
    {} + \frac{(2 + \smallOp[\P_0](1)) \, (e(D,-D) - \E_0[e(D,-D)])}{\sqrt{\E_0[e(V)] - 2\E_0[e(D,-D)]} \bigl(\sqrt{\E_0[e(V)]}-\sqrt{\E_0[e(V)] - 2\E_0[e(D,-D)]}\bigr)}
  \end{multlined}\\[0.5ex]
  &= 1 - (1 + \smallOp[\P_0](1)) \, \frac{e(V) - \E_0[e(V)]}{\E_0[e(V)]} + (2 + \smallOp[\P_0](1)) \, \frac{e(D,-D) - \E_0[e(D,-D)]}{\E_0[e(D,-D)]}\\
  &= 1 + \smallOp[\P_0](1) \,,
\end{align}
where we have used $\E_0[e(D,-D)] = \smallO(\E_0[e(V)])$ and $\E_0[e(V)] \to \infty$ in the second equality above, which is ensured by Assumption~\ref{ass:unknown_probability_maximum_inhomogeneity}. To see this, note that $(\wmax / \wmin)^2 \leq \smallO\bigl(\frac{n}{r} \wmin^2\bigr) \leq \smallO\bigl(\frac{n}{r}\bigr)$ because $\wmin^2 \leq 1$, hence
\begin{equation}
\frac{\E_0[e(D,-D)]}{\E_0[e(V)]}
  \leq (1 + \smallO(1)) \frac{|D| \mspace{1mu} n \mspace{1mu} \wmax^2}{n^2 \mspace{1mu} \wmin^2}
  \leq \frac{|D|}{n} \smallO\left(\frac{n}{r}\right)
  = \smallO\left(\frac{|D|}{r}\right)
  = \smallO(1) \,.
\end{equation}
To continue, we will show that
\begin{align}
\label{eq:f_estimator}
f\left(e(V), e(D,-D)\right)
  &= 4 \, \est[D] \,,\\
\label{eq:f_expectation}
f\left(\E_0[e(V)],\E_0[e(D,-D)]\right)
  &= (1 + \smallO(1)) \, \biggl(4 \,\E_0[e(D)] + 2 \sum_{i \in D} \w_i^2\biggr)\\*
  &= (1 + \smallO(1)) \, 4 \,\E_0[e(D)] \,.
\end{align}
Here \eqref{eq:f_estimator} follows directly from the definition in \eqref{eq:edge_count_estimator}. To obtain the first equality in \eqref{eq:f_expectation} we use Assumption~\ref{ass:unknown_probability_maximum_inhomogeneity} to ensure that $\E_0[e(V)] + \frac{1}{2} \sum_{i \in V} \w_i^2 = (1 + \smallO(1))\E_0[e(V)]$. This is easily shown since
\begin{equation}
\frac{\E_0[e(V)] + \sum_{i \in V} \w_i^2}{\E_0[e(V)]}
  %= 1 + \frac{\sum_{i \in V} \w_i^2}{\E_0[e(V)]}
  \leq 1 + \frac{n \mspace{1mu} \wmax^2}{\binom{n}{2} \wmin^2}
  \leq 1 + \frac{n \mspace{1mu} r^{2/3} \mspace{1mu} \wmin^2}{\binom{n}{2} \wmin^2}
  = 1 + 2 \, \frac{r^{2/3}}{n-1}
  = 1 + \smallO(1) \,.
\end{equation}
For the second equality in \eqref{eq:f_expectation} we need to show $\sum_{i \in D} \w_i^2 / \E_0[e(D)] = \smallO(1)$. To this end, we first show
\begin{equation}
\label{eq:maximum_inhomogeneity_per_subset}
\frac{\sum_{i \in D} \w_i^2}{\left(\sum_{i \in D} \w_i\right)^2}
  \leq \frac{1}{4 |D|} \frac{(\wmin + \wmax)^2}{\wmin \, \wmax} \,.
\end{equation}
To see this, note that the ratio $\sum_{i \in D} \w_i^2 \big/ \bigl(\sum_{i \in D} \w_i\bigr)^2$ is maximized when a fraction $\alpha = \wmin / (\wmin + \wmax)$ of the vertices in $D$ has weight $\wmax$ and the remaining $1 - \alpha$ fraction of vertices has weight $\wmin$. Plugging this in we obtain \eqref{eq:maximum_inhomogeneity_per_subset}. Then, by Assumption~\ref{ass:unknown_probability_maximum_inhomogeneity} it follows that $\wmax / \wmin = \smallO(r^{1/3})$ and using that $|D| \geq r^{1/3}$ together with \eqref{eq:maximum_inhomogeneity_per_subset}, we obtain
\begin{equation}
\frac{\sum_{i \in D} \w_i^2}{\left(\sum_{i \in D} \w_i\right)^2}
  \leq \frac{1}{4 |D|} \frac{(\wmin + \wmax)^2}{\wmin \, \wmax}
  \leq \frac{1}{|D|} \frac{\wmax}{\wmin}
  = \frac{\smallO(r^{1/3})}{r^{1/3}}
  = \smallO(1) \,.
\end{equation}
Hence, plugging \eqref{eq:f_estimator} and \eqref{eq:f_expectation} into \eqref{eq:f_taylor_applied} gives
\begin{equation}
\frac{\est[D]}{\E_0[e(D)]}
  = (1 + \smallO(1)) \, \frac{f(e(V), e(D,-D))}{f(\E_0[e(V)], \E_0[e(D,-D)])}
  = (1 + \smallOp[\P_0](1)) \,.
\end{equation}
Finally, it can easily be checked, using the same steps as above, that the lemma remains true under the alternative hypothesis (i.e., when all $(1 + \smallOp[\P_0](1))$ terms are replaced by $(1 + \smallOp[\P_C](1))$ terms).
\end{proof}

\begin{proof}[\unskip\nopunct]
We are now ready to prove Theorem~\ref{thm:unknown_probability_scan_test_powerful}, which shows that the scan test in \eqref{eq:unknown_probability_scan_test_statistic} is still asymptotically powerful even when the edge probabilities are not known. To this end, we again show that both the type-I and the type-II error vanish, which we do separately below.

\paragraph{Type-I error.}
Here we show $\P_0(\statup \geq 1 + \epsilon/3) \to 0$. To this end, we show that using the truncated estimator $\esttr[D]$ from \eqref{eq:edge_count_estimator_threshold} is asymptotically as good as using $\E_0[e(D)]$. Specifically, we show that uniformly over all subgraphs $D \subseteq V$ of size $r^{1/3} \leq |D| \leq r$,
\begin{equation}
\label{eq:truncated_estimator_uniform_lower_bound}
\max_{D \subseteq V,\,r^{1/3} \leq |D| \leq r} \, \frac{\E_0[e(D)]}{\esttr[D]} \leq 1 + \smallOp[\P_0](1) \,.
\end{equation}
To show this, define the random set $\uniformboundset \coloneqq \{D \subseteq V : r^{1/3} \leq |D| \leq r,\, \esttr[D] \leq \E_0[e(D)]\}$ and rewrite \eqref{eq:truncated_estimator_uniform_lower_bound} as
\begin{equation}
\label{eq:truncated_estimator_uniform_lower_bound_alternative}
%\max_{D \subseteq V,\,r^{1/3} \leq |D| \leq r} \, \frac{\E_0[e(D)]}{\esttr[D]} = 
  \max_{D \subseteq V,\,r^{1/3} \leq |D| \leq r} \, \left( \frac{\E_0[e(D)]}{\esttr[D]} \1{\{D \in \uniformboundset\}} + \frac{\E_0[e(D)]}{\esttr[D]} \1{\{D \notin \uniformboundset\}} \right)\,.
\end{equation}
In the second term above we have $D \notin \uniformboundset$, so this term is trivially less than or equal to $1$. Therefore we will focus on the first term in \eqref{eq:truncated_estimator_uniform_lower_bound_alternative}. For any $D \in \uniformboundset$, it follows by definition of the thresholded estimator $\esttr[D]$ in \eqref{eq:edge_count_estimator_threshold} that
\begin{equation}
\frac{|D|^2}{n} \log^4\!\left(\frac{n}{|D|}\right) \leq \esttr[D] \leq \E_0[e(D)] \leq \biggl(\sum_{i \in D} \w_i\biggr)^{\!2} ,
\quad\text{hence}\quad
\frac{|D|}{\sqrt{n}} \log^2\!\left(\frac{n}{|D|}\right) \leq \sum_{i \in D} \w_i \,.
\end{equation}
Now, by the second part of Assumption~\ref{ass:unknown_probability_maximum_inhomogeneity} we have $1 \leq \smash{\bigl(\frac{\wmax}{\wmin}\bigr)^2} \leq \frac{n}{r} \mspace{1mu} \wmin^2$, and therefore $\wmin \geq \sqrt{r / n} \geq 1 / \sqrt{n}$. Using this we obtain
\begin{align}
\label{eq:expected_outside_edges}
\E_0[e(D,-D)]
  &= \biggl(\sum_{i \in D} \w_i\biggr) \biggl(\sum_{j \notin D} \w_j\biggr)
  \geq \frac{|D|}{\sqrt{n}} \log^2\!\left(\frac{n}{|D|}\right) \, \frac{n - |D|}{\sqrt{n}}\\
  &= (1 + \smallO(1)) |D| \log^2\!\left(\frac{n}{|D|}\right) \,.
\end{align}
Recall that Bennett's inequality ensures that, for $t > 0$,
\begin{equation}
\P_0(e(D,-D) - \E_0[e(D,-D)] \leq -t)
  \;\leq\; \exp\left(-\E_0[e(D,-D)] \, h\left(\frac{t}{\E_0[e(D,-D)]}\right)\right) \,.
\end{equation}
To get a uniform bound over all subgraphs $D \in \uniformboundset$, we use a union bound together with \eqref{eq:expected_outside_edges}. For any $\delta > 0$ and $n$ large enough, this gives
\begin{align}
\hspace{15pt}&\hspace{-15pt}
\P\biggl(\min_{D \in \uniformboundset} e(D,-D) - \E_0[e(D,-D)] \leq - (1 + \delta) \sqrt{2 \, \E_0[e(D,-D)] \, |D| \log(n / |D|)}\biggr)\\
  &\leq \begin{multlined}[t][0.9\displaywidth]
  \sum_{1 \leq k \leq r} \sum_{D \subseteq V,\, |D| = k} \1{\bigl\{\E_0[e(D,-D)] \geq (1 - \delta) |D| \log^2(n / |D|)\bigr\}}\notag\\*
    {} \times \P\biggl(e(D,-D) - \E_0[e(D,-D)] \leq - (1 + \delta) \sqrt{2 \, \E_0[e(D,-D)] \, |D| \log(n / |D|)}\biggr)\notag
  \end{multlined}\\[0.5ex]
  &\leq \begin{multlined}[t][0.9\displaywidth]
  \sum_{1 \leq k \leq r} \sum_{D \subseteq V,\, |D| = k} \1{\bigl\{\E_0[e(D,-D)] \geq (1 - \delta) |D| \log^2(n / |D|)\bigr\}}\notag\\*[-1ex]
    {} \times \exp\Biggl(-\E_0[e(D,-D)] \, h\Biggl((1 + \delta) \sqrt{\frac{2 \, |D| \log(n / |D|)}{\E_0[e(D,-D)]}}\Biggr)\Biggr)\notag
  \end{multlined}\\[0.5ex]
  &\leq \sum_{1 \leq k \leq r} \binom{n}{k} \exp\left(-(1+\delta) k \log\left(\frac{n}{k}\right)\right)\label{eq:step}\\
  &\leq \sum_{1 \leq k \leq r} \left(\e \left(\frac{k}{n}\right)^{\delta}\right)^k
  \leq \frac{\e \left(\frac{r}{n}\right)^{\delta}}{1 - \e \left(\frac{r}{n}\right)^{\delta}}
  \to 0 \,,
\end{align}
For the step in \eqref{eq:step} we have used the result in \eqref{eq:expected_outside_edges} together with $h(x) \asymp x^2 / 2$ as $x \to 0$, and the final step relies on the fact that $k / n \leq r / n$ and $r = \smallO(n)$.

Then, using the above together with \eqref{eq:expected_outside_edges}, it follows that uniformly over $D \in \uniformboundset$,
\begin{equation}
\label{eq:between_degree_deviations}
\frac{e(D,-D) - \E_0[e(D,-D)]}{\E_0[e(D,-D)]}
  = \bigOp[\P_0]\!\left(\sqrt{\frac{|D| \log(n / |D|)}{\E_0[e(D,-D)]}}\right)
  = \smallOp[\P_0](1) \,.
\end{equation}
To bound the deviations of $e(V)$ we use Chebyshev's inequality,
\begin{equation}
\label{eq:total_degree_deviations}
\frac{e(V) - \E_0[e(V)]}{\E_0[e(V)]}
  = \bigOp[\P_0]\!\left(\sqrt{\frac{1}{\E_0[e(V)]}}\right)
  = \smallOp[\P_0](1) \,.
\end{equation}
Using \eqref{eq:between_degree_deviations} and \eqref{eq:total_degree_deviations}, it follows by Lemma~\ref{lem:estimator_taylor_expansion} that uniformly over $D \in \uniformboundset$,
\begin{equation}
\label{eq:taylor_approximation_estimator}
\frac{\esttr[D]}{\E_0[e(D)]}
  \geq \frac{\est[D]}{\E_0[e(D)]}
  = 1 + \smallOp[\P_0](1) \,.
\end{equation}
This shows that the first term in \eqref{eq:truncated_estimator_uniform_lower_bound_alternative} is less than or equal to $1 + \smallOp[\P_0](1)$, and therefore that \eqref{eq:truncated_estimator_uniform_lower_bound} holds.

Then, using \eqref{eq:truncated_estimator_uniform_lower_bound} it becomes relatively straightforward to show that the type-I error vanishes. Indeed, note that $a \,h\bigl(\bigl[\frac{x}{a}-1\bigr]_{+}\bigr) \leq b\,h\bigl(\bigl[\frac{x}{b} - 1\bigr]_{+}\bigr)$ for $a > b$, and therefore
\begin{align}
\hspace{10pt}&\hspace{-10pt}
\P_{0}\left(\statup \geq 1 + \frac{\epsilon}{3}\right)
  = \P_{0}\left(\max_{\substack{D \subseteq V,\\ r^{1/3} \leq |D| \leq r}} 
    \frac{\esttr[D] \, h\left(\left[e(D) / \esttr[D] - 1\right]_{\!+}\right)}{|D| \log\left(n / |D|\right)} \geq 1 + \frac{\epsilon}{3}\right)\\[1ex]
  &\leq \P_{0}\left(\max_{\substack{D \subseteq V,\\ r^{1/3} \leq |D| \leq r}} 
      \frac{(1 + \smallOp[\P_0](1)) \E_0[e(D)] \, h\left(\left[(1 + \smallOp[\P_0](1)) \frac{e(D)}{\E_0[e(D)]} - 1\right]_{\!+}\right)}{|D| \log\left(n / |D|\right)} \geq 1 + \frac{\epsilon}{3}\right) \,.\notag
\end{align}
Then using the same reasoning as in the proof of Theorem~\ref{thm:known_probability_scan_test_powerful}, it follows that the type-I error vanishes.

\paragraph{Type-II error.} 
Here we show that $\P_C(\statup \geq 1 + \epsilon / 3) \geq \P_C(\statup[\optimalsubgraph] \geq 1 + \epsilon / 3) \to 1$, for every $C \subseteq V$ of size $|C| = r$, where $\optimalsubgraph = \optimalsubgraph[C]$ is defined as in \eqref{eq:optimal_subset_argmax}. To this end, we start by quantifying the deviation of $\est[\optimalsubgraph] / \E_0[e(\optimalsubgraph)]$ under the alternative.

By Chebyshev's inequality,
\begin{align}
\label{eq:chebyshev_alternative1}
\frac{e(\optimalsubgraph,-\optimalsubgraph) - \E_C[e(\optimalsubgraph,-\optimalsubgraph)]}{\E_C[e(\optimalsubgraph,-\optimalsubgraph)]}
  &= \bigOp[\P_C]\!\left(\sqrt{\frac{1}{\E_C[e(\optimalsubgraph,-\optimalsubgraph)]}}\right) 
  = \smallOp[\P_C](1) \,,\\
\label{eq:chebyshev_alternative2}
\frac{e(V) - \E_C[e(V)]}{\E_C[e(V)]}
  &= \bigOp[\P_C]\!\left(\sqrt{\frac{1}{\E_C[e(V)]}}\right)
  = \smallOp[\P_C](1) \,.
\end{align}
Moreover, $\scaling[C] \wmin^2 \leq 1$ and therefore $\frac{\wmax^2}{\wmin^2} = \smallO(\frac{n}{r} \wmin^2) \leq \smallO(\frac{n}{r} \frac{1}{\scaling[C]})$ by Assumption~\ref{ass:unknown_probability_maximum_inhomogeneity}. Hence $\scaling[C] \leq \smallO\left(\frac{n}{r} \frac{\wmin^2}{\wmax^2}\right)$. Therefore
\begin{align}
1 \leq \frac{\E_C[e(\optimalsubgraph,-\optimalsubgraph)]}{\E_0[e(\optimalsubgraph,-\optimalsubgraph)]}
  &\leq 1 + \frac{\E_C[e(\optimalsubgraph,C \setminus \optimalsubgraph)]}{\E_0[e(\optimalsubgraph,V \setminus \optimalsubgraph))]}
  = 1 + \scaling[C] \, \frac{\E_0[e(\optimalsubgraph,C \setminus \optimalsubgraph)]}{\E_0[e(\optimalsubgraph,V \setminus \optimalsubgraph))]}\\
  &\leq 1 + \scaling[C] \, \frac{|\optimalsubgraph| (|C| - |\optimalsubgraph|)}{|\optimalsubgraph| (|V| - |\optimalsubgraph|)} \frac{\wmax^2}{\wmin^2}
  \leq 1 + \scaling[C] \, \frac{r}{n} \frac{\wmax^2}{\wmin^2}
  \leq 1 + \smallO(1) \,.
\end{align}
By the above it follows that $\E_C[e(\optimalsubgraph,-\optimalsubgraph)] = (1 + \smallO(1))\E_0[e(\optimalsubgraph,-\optimalsubgraph)]$, and similarly $\E_C[e(V)] = (1 + \smallO(1))\E_0[e(V)]$. Therefore
\begin{equation}
e(\optimalsubgraph,-\optimalsubgraph) = (1 + \smallOp[\P_C](1)) \E_0[e(\optimalsubgraph,-\optimalsubgraph)] \,,
\quad\text{and}\quad
e(V) = (1 + \smallOp[\P_C](1)) \E_0[e(V)] \,.
\end{equation}
Then, applying Lemma~\ref{lem:estimator_taylor_expansion} (using the set $\mathcal{D} = \{\optimalsubgraph\}$), we obtain
\begin{align}
\frac{\est[\optimalsubgraph]}{\E_0[e(\optimalsubgraph)]}
  &= 1 + \smallOp[\P_C](1) \,.
\end{align}
Therefore by definition of the thresholded estimator in \eqref{eq:edge_count_estimator_threshold},
\begin{align}
\label{eq:estimator_approximation_truncated}
\esttr[\optimalsubgraph]
  &= \left(\est[\optimalsubgraph] \medvee \frac{|\optimalsubgraph|^2}{n} \log^4\left(\frac{n}{|\optimalsubgraph|}\right)\right)\\
  &= \left((1 + \smallOp[\P_C](1))\E_0[e(\optimalsubgraph)] \medvee \frac{|\optimalsubgraph|^2}{n} \log^4\left(\frac{n}{|\optimalsubgraph|}\right)\right) \,.
\end{align}
We continue by considering the two cases in the maximum of \eqref{eq:estimator_approximation_truncated} separately.

\paragraph{\normalfont\emph{Case 1}:}
Here we have $\esttr[\optimalsubgraph] = (1 + \smallOp[\P_C](1))\E_0[e(\optimalsubgraph)]$. Plugging this into the definition of the test statistic we obtain
\begin{align}
\statup[\optimalsubgraph]\mspace{-1mu}
  &= \frac{\esttr[\optimalsubgraph] h\Bigl(\Bigl[\frac{e(\optimalsubgraph)}{\esttr[\optimalsubgraph]} - 1\Bigr]_{\!+}\Bigr)}{|\optimalsubgraph| \log(n / |\optimalsubgraph|)}\\
  &= \frac{(1 \mspace{-2mu} + \mspace{-2mu} \smallOp[\P_C](1))\E_0[e(\optimalsubgraph)] h\Bigl(\Bigl[(1 \mspace{-2mu} + \mspace{-2mu} \smallOp[\P_C](1))\frac{e(\optimalsubgraph)}{\E_0[e(\optimalsubgraph)]} - 1\Bigr]_{\!+}\Bigr)}{|\optimalsubgraph| \log(n / |\optimalsubgraph|)} \,.
\end{align}
The proof can then be completed by using the same reasoning as in the proof of Theorem~\ref{thm:known_probability_scan_test_powerful} from \eqref{eq:asymptotic_equivalence} to \eqref{eq:alternative_edge_count_deviations_case3}. Here the additional $\smallOp[\P_C](1)$ terms do not make any difference.

\paragraph{\normalfont\emph{Case 2}:}
Here we have $\esttr[\optimalsubgraph] = (|\optimalsubgraph|^2 / n)\log^4\left(n / |\optimalsubgraph|\right)$. This corresponds to the case where the underlying graph is very sparse, and therefore a very large signal $\scaling[C]$ is required to detect a planted community.

We start by deriving a lower bound on $\scaling[C]$. Using condition \eqref{eq:unknown_probability_scan_test_powerful_condition} and the fact that $\E_0[e(\optimalsubgraph)] \leq (|\optimalsubgraph|^2 / n) \log^4\left(n / |\optimalsubgraph|\right)$ and $h^\invf(x) \geq \sqrt{x}$, we obtain
\begin{equation}
\label{eq:minimum_scaling_for_truncated_estimator}
\scaling[C]
  \geq h^\invf\!\left(\frac{|\optimalsubgraph| \log(n / |\optimalsubgraph|)}{\E_0[e(\optimalsubgraph)]}\right)
  \geq h^\invf\!\left(\frac{n}{|\optimalsubgraph|} \frac{1}{\log^3(n / |\optimalsubgraph|)}\right)
  \geq (1 + \smallO(1)) \sqrt{n / |\optimalsubgraph|} \,.
\end{equation}
Moreover, by the second part of Assumption~\ref{ass:unknown_probability_maximum_inhomogeneity} we have $1 \leq \smash{\bigl(\frac{\wmax}{\wmin}\bigr)^2} \leq \frac{n}{r} \mspace{1mu} \wmin^2$, and therefore $\wmin \geq \sqrt{r / n} \geq 1 / \sqrt{n}$. Using this together with \eqref{eq:minimum_scaling_for_truncated_estimator} gives
\begin{equation}
\frac{\E_C[e(\optimalsubgraph)]}{\esttr[\optimalsubgraph]}
  \geq \frac{\scaling[C] |\optimalsubgraph|^2 \wmin^2}{\frac{|\optimalsubgraph|^2}{n} \log^4\left(\frac{n}{|\optimalsubgraph|}\right)}
  \geq \frac{\scaling[C]}{\log^4\left(\frac{n}{|\optimalsubgraph|}\right)}
  \to \infty \,.
\end{equation}
Then, using that $e(\optimalsubgraph) = (1 + \smallOp[\P_C](1))\E_C[e(\optimalsubgraph)]$ by Chebyshev's inequality and $h(x - 1) \asymp x \log(x)$ as $x \to \infty$, we obtain
\begin{align}
\label{eq:thresholded_statistic_bound}
\esttr[\optimalsubgraph] h\!\left(\frac{e(\optimalsubgraph)}{\esttr[\optimalsubgraph]} - 1\right)
  &\geq \esttr[\optimalsubgraph] h\!\left((1 + \smallOp[\P_C](1))\frac{\E_C[e(\optimalsubgraph)]}{\esttr[\optimalsubgraph]} - 1\right)\\
  &= (1 + \smallOp[\P_C](1)) \E_C[e(\optimalsubgraph)] \log\!\left(\frac{\E_C[e(\optimalsubgraph)]}{\esttr[\optimalsubgraph]}\right)\\
  &= (1 + \smallOp[\P_C](1)) \E_C[e(\optimalsubgraph)] \log\!\left(\scaling[C] \, \frac{\E_0[e(\optimalsubgraph)]}{\esttr[\optimalsubgraph]}\right) \,.
\end{align}
Now, by the same argument as above we have $\wmin \geq 1 / \sqrt{n}$. Hence, it follows that $\E_0[e(\optimalsubgraph)] \geq |\optimalsubgraph|^2 \mspace{1mu} \wmin^2 \geq |\optimalsubgraph|^2 / n$, so that $\E_0[e(\optimalsubgraph)] / \esttr[\optimalsubgraph] \geq \log^{-4}\left(n / |\optimalsubgraph|\right)$. Then, using \eqref{eq:minimum_scaling_for_truncated_estimator},
\begin{align}
\frac{\log(\scaling[C] \, \E_0[e(\optimalsubgraph)] / \esttr[\optimalsubgraph])}{\log(\scaling[C])}
  &\geq \frac{\log(\scaling[C] / \log^4(n / |\optimalsubgraph|))}{\log(\scaling[C])}\\
  &= 1 - 4 \frac{\log\log(n / |\optimalsubgraph|)}{\log(\scaling[C])}
  = 1 + \smallO(1) \,.
\end{align}
Plugging this into \eqref{eq:thresholded_statistic_bound}, we obtain
\begin{align}
\esttr[\optimalsubgraph] h\!\left(\frac{e(\optimalsubgraph)}{\esttr[\optimalsubgraph]} - 1\right)
  &= (1 + \smallOp[\P_C](1)) \E_C[e(\optimalsubgraph)] \log\!\left(\scaling[C] \, \frac{\E_0[e(\optimalsubgraph)]}{\esttr[\optimalsubgraph]}\right)\\
  &\geq \vphantom{\frac{1}{2}} (1 + \smallOp[\P_C](1)) \E_C[e(\optimalsubgraph)] \log\left(\scaling[C]\right)\\
  &= \vphantom{\frac{1}{2}} (1 + \smallOp[\P_C](1)) \E_0[e(\optimalsubgraph)] h(\scaling[C] - 1)\\
  &\geq (1 + \smallOp[\P_C](1)) (1 + \epsilon) |\optimalsubgraph| \log\!\left(\frac{n}{|\optimalsubgraph|}\right) \,,
\end{align}
where the final step follows from \eqref{eq:unknown_probability_scan_test_powerful_condition}. Therefore, $\statup[\optimalsubgraph] \geq 1 + \epsilon / 3$ with high probability, completing the proof.
\end{proof}

%%%%%%%%%%%%%%%%%%%%%%%%%%%%%%%%%%%%%%%%%%%%%%%%%%%%%%%%%%%%%%%%%%%%%%%%%%%%%%%%
%%%%%%%%%%%%%%%%%%%%%%%%%%%%%%%%%%%%%%%%%%%%%%%%%%%%%%%%%%%%%%%%%%%%%%%%%%%%%%%%
\subsection{Proof of Corollaries \ref{cor:known_probability_scan_test_powerful} and \ref{cor:unknown_probability_scan_test_powerful}}
\label{subsec:proof_of_corollaries}
\begin{proof}[\unskip\nopunct]
To prove Corollaries \ref{cor:known_probability_scan_test_powerful} and \ref{cor:unknown_probability_scan_test_powerful} we need to show that either Assumption~\ref{ass:lower_bound_maximum_inhomogeneity} or \ref{ass:lower_bound_maximum_community_small} is sufficient to ensure that $\E_C[e(D^{\star})] \to \infty$ for every $C \subseteq V$ of size $|C| = r$. When $\scaling[C] = \bigO(1)$ this is a direct consequence of conditions \eqref{eq:known_probability_scan_test_powerful_condition_corollary} and \eqref{eq:unknown_probability_scan_test_powerful_condition_corollary}, therefore we will consider the case where $\scaling[C] \to \infty$.

Using $h(x - 1) \asymp x \log(x)$ as $x \to \infty$ together with \eqref{eq:known_probability_scan_test_powerful_condition_corollary} or \eqref{eq:unknown_probability_scan_test_powerful_condition_corollary}, we obtain
\begin{equation}
\label{eq:alternative_informative_subgraph_edge_count}
\E_C[e(D^{\star})]
  = (1 + \smallO(1)) \, \frac{\E_0[e(D^{\star})] h(\scaling[C] - 1)}{\log(\scaling[C])}
  \geq (1 + \smallO(1)) \, \frac{|D^{\star}| \log(n / |D^{\star}|)}{\log(\scaling[C])} \,.
\end{equation}
Below we consider the two cases where Assumption~\ref{ass:lower_bound_maximum_inhomogeneity} or Assumption~\ref{ass:lower_bound_maximum_community_small} hold separately.

\paragraph{\normalfont\emph{Case 1} (Assumption~\ref{ass:lower_bound_maximum_inhomogeneity} holds):}
First, by Definition~\ref{def:optimal_subset_argmax} and Assumption~\ref{ass:lower_bound_maximum_inhomogeneity}, it follows that for every $C \subseteq V$,
\begin{equation}
\frac{\E_0[e(D^{\star})]}{|D^{\star}| \log(n / |D^{\star}|)}
  \geq \frac{\E_0[e(C)]}{|C| \log(n / |C|)}
  = (1 + \smallO(1)) \, \frac{r \mspace{2mu} \overline{p}_C}{2 \log(n / r)}
  \to \infty \,,
\end{equation}
where the final step is a consequence of Assumption~\ref{ass:lower_bound_maximum_inhomogeneity}~\ref{ass:lbl:lower_bound_maximum_inhomogeneity_sparsity}. In particular, this means that we must have that $|D^{\star}| \to \infty$.

Then, for every $C \subseteq V$ we have $\scaling[C] \overline{p}_C \leq 1$, and therefore $\scaling[C] \leq 1 / \overline{p}_C \leq r \leq \sqrt{n}$ for $n$ large enough by Assumption~\ref{ass:lower_bound_maximum_inhomogeneity} \ref{ass:lbl:lower_bound_maximum_inhomogeneity_size} and \ref{ass:lbl:lower_bound_maximum_inhomogeneity_sparsity}. Therefore, using \eqref{eq:alternative_informative_subgraph_edge_count} and because $|D^{\star}| \to \infty$, it follows that
\begin{equation}
\E_C[e(D^{\star})]
  \geq (1 + \smallO(1)) \, \frac{|D^{\star}| \log(n / |D^{\star}|)}{\log(\scaling[C])}
  \geq (1 + \smallO(1)) \, |D^{\star}| \, \frac{\log(\sqrt{n})}{\log(\sqrt{n})}
  \to \infty \,.
\end{equation}

\paragraph{\normalfont\emph{Case 2} (Assumption~\ref{ass:lower_bound_maximum_community_small} holds):}
For every $C \subseteq V$ we have $\scaling[C] \overline{p}_C \leq 1$, and therefore $\log(\scaling[C]) \leq \log(1 / \overline{p}_C) = \smallO(\log(n))$ by Assumption~\ref{ass:lower_bound_maximum_community_small}~\ref{ass:lbl:lower_bound_maximum_community_small_sparsity}. Hence, using \eqref{eq:alternative_informative_subgraph_edge_count}, we obtain
\begin{equation}
\E_C[e(D^{\star})]
  \geq (1 + \smallO(1)) \, \frac{|D^{\star}| \log(n / |D^{\star}|)}{\log(\scaling[C])}
  \geq (1 + \smallO(1)) \, \frac{|D^{\star}| \log(n)}{\smallO(\log(n))}
  \to \infty \,.
\end{equation}
The above two cases show that either Assumption~\ref{ass:lower_bound_maximum_inhomogeneity} or \ref{ass:lower_bound_maximum_community_small} is sufficient to ensure that $\E_C[e(D^{\star})] \to \infty$ for every $C \subseteq V$ of size $|C| = r$.
\end{proof}

%%%%%%%%%%%%%%%%%%%%%%%%%%%%%%%%%%%%%%%%%%%%%%%%%%%%%%%%%%%%%%%%%%%%%%%%%%%%%%%%
%%%%%%%%%%%%%%%%%%%%%%%%%%%%%%%%%%%%%%%%%%%%%%%%%%%%%%%%%%%%%%%%%%%%%%%%%%%%%%%%
\subsection{Proof of Corollaries \ref{cor:known_probability_scan_test_powerful2} and \ref{cor:unknown_probability_scan_test_powerful2}}
\label{subsec:proof_of_corollaries2}
\begin{proof}[\unskip\nopunct]
Begin by noting that the conditions in Corollary~\ref{cor:unknown_probability_scan_test_powerful2} imply the conditions on $\pmax$ and $\pmin$ that are stated in Corollary~\ref{cor:known_probability_scan_test_powerful2}. To prove Corollaries \ref{cor:known_probability_scan_test_powerful2} and \ref{cor:unknown_probability_scan_test_powerful2} we need to show that $\E_C[e(\optimalsubgraph)] \to \infty$ for every $C \subseteq V$ of size $|C| = r$. Because $\pmax / \pmin = \smallO(n^{a - b})$, there exists a sequence $x_n \to \infty$ such that $\pmax / \pmin = n^{a - b} / x_n$. We will first show that $|\optimalsubgraph| \geq n^b \sqrt{x_n}$, which we will do by a similar argument as in the proof of Lemma~\ref{lem:optimal_subset_size}. Suppose $|\optimalsubgraph| \leq n^b \sqrt{x_n}$, then because $r \geq n^a$,
\begin{align}
\frac{\E_0[e(\optimalsubgraph)]}{|\optimalsubgraph| \log(n / |\optimalsubgraph|)}
  &\leq \frac{|\optimalsubgraph| - 1}{2} \, \frac{\pmax}{\log(n / |\optimalsubgraph|)}\\
  &= \frac{|\optimalsubgraph| - 1}{2} \, \frac{n^{a - b}}{x_n} \, \frac{\pmin}{\log(n / |\optimalsubgraph|)}\\
  &\leq \bigO(1) \frac{n^a}{\sqrt{x_n}} \, \frac{\pmin}{\log(n / r)}\\
  &< \frac{r - 1}{2} \, \frac{\pmin}{\log(n / r)}
  \leq \frac{\E_0[e(C)]}{|C| \log(n / |C|)} \,.
\end{align}
Hence, $\optimalsubgraph$ cannot be the maximizer in \eqref{eq:optimal_subset_argmax} when $|\optimalsubgraph| \leq n^b \sqrt{x_n}$, and therefore we must have $|\optimalsubgraph| \geq n^b \sqrt{x_n}$. Therefore,
\begin{equation}
\E_C[e(\optimalsubgraph)]
  \geq \E_0[e(\optimalsubgraph)]
  \geq \frac{|\optimalsubgraph|^2}{2} \pmin
  \geq \frac{n^{2b} \mspace{1mu} x_n}{2} n^{-2b}
  \to \infty.
\end{equation}
The proof of Corollary~\ref{cor:known_probability_scan_test_powerful2} is then completed by applying Theorem~\ref{thm:known_probability_scan_test_powerful}, and similarly the proof of Corollary~\ref{cor:unknown_probability_scan_test_powerful2} is completed by applying Theorem~\ref{thm:unknown_probability_scan_test_powerful}.
\end{proof}

%%%%%%%%%%%%%%%%%%%%%%%%%%%%%%%%%%%%%%%%%%%%%%%%%%%%%%%%%%%%%%%%%%%%%%%%%%%%%%%%
%%%%%%%%%%%%%%%%%%%%%%%%%%%%%%%%%%%%%%%%%%%%%%%%%%%%%%%%%%%%%%%%%%%%%%%%%%%%%%%%
\subsection{Proof of Theorem~\ref{thm:lower_bound}: Information theoretic lower bound}
\label{subsec:proof_of_information_theoretic_lower_bound}
\begin{proof}[\unskip\nopunct]
To prove Theorem~\ref{thm:lower_bound} we need to show that $\risk_n(\test_n) \to 1$, where $\risk_n$ is the worst-case risk given in \eqref{eq:worst_case_risk} and $\test_n \mapsto \{0, 1\}$ is any test deciding between the null and alternative hypothesis. The first step is a reduction from the worst-case risk to the average risk
\begin{equation}
\bar{\risk}_n(\test_n) \coloneqq \P_0(\test_n(G) = 1) + \binom{n}{r}^{\!-1} \!\! \sum_{C \subseteq V,\, |C| = r} \P_C(\test_n(G) = 0) \,.
\end{equation}
Note that the average risk is a lower bound for the worst-case risk, that is $\risk_n(\test_n) \geq \bar{\risk}_n(\test_n)$. This average risk corresponds to a hypothesis test between two simple hypotheses, because the alternative hypothesis is now simple. The means that the likelihood ratio test is optimal (by the Neyman-Pearson lemma). In particular, the test $\testlr(G) = \1{\{L(G) > 1\}}$ minimizes the average risk, where $L(G)$ is the likelihood ratio, given in \eqref{eq:likelihood_ratio} below. To avoid overloading the notation we write simply $L$ to denote $L(G)$. The risk of this test is given by
\begin{equation}
\label{eq:likelihood_ratio_test_risk}
\bar{\risk}_n(\testlr)
  = \P_0(L > 1) + \E_0[L \1{\{L \leq 1\}}] 
  = 1 - \frac{1}{2} \, \E_0[|L - 1|] \,.
\end{equation}
Therefore, to prove Theorem~\ref{thm:lower_bound}, it suffices to show that $\E_0[|L - 1|] \to 0$.

Given a graph $g$, the likelihood ratio $L(g)$ is given by
\begin{equation}
\label{eq:likelihood_ratio}
L(g)
  \coloneqq{} \binom{n}{r}^{\!-1} \!\! \sum_{C \subseteq V,\, |C|=r} \frac{\P_C(G = g)}{\P_0(G = g)}
  = \binom{n}{r}^{\!-1} \!\! \sum_{C \subseteq V,\, |C|=r} L_C(g)
  = \bar{\E}[L_C(g)] \,,
\end{equation}
where $\bar{\E}[\cdot]$ denotes the expectation with respect to a uniformly chosen set $C \subseteq V$ of size $|C| = r$, and
\begin{equation}\label{eq:likelihood_ratio_part0}
L_C(g) \coloneqq{}
  \prod_{i < j \in C} \left(\frac{\scaling[C] p_{ij}}{p_{ij}}\right)^{A_{ij}} \left(\frac{1 - \scaling[C]p_{ij}}{1 - p_{ij}}\right)^{1 - A_{ij}}\, .
\end{equation}
To bound $\E_0[|L - 1|]$ one generally resorts to the Cauchy-Schwarz inequality to control instead the second moment of $L$ and obtain $\E_0[|L - 1|]\leq \E_0[L^2]-1$. However, in our setting this bound is too crude, and the variance of $L$ will be rather large in comparison to the first moment. To see this note that the second moment can be written as
\begin{align}
\hspace{10pt}&\hspace{-10pt}
\E_0[L^2]
  = \bar{\E}^{\otimes 2}\left[\E_0\left[L_{C_1} L_{C_2}\right]\right]\\
  &= \bar{\E}^{\otimes 2}\Biggl[\E_0\Biggl[\prod_{i < j \in C_1\cap C_2} \left(\frac{\scaling[C_1] p_{ij} \mspace{2mu} \scaling[C_2] p_{ij}}{p_{ij}^2}\right)^{\!A_{ij}} \left(\frac{(1 - \scaling[C_1]p_{ij})(1 - \scaling[C_2]p_{ij})}{(1 - p_{ij})^2}\right)^{\!1 - A_{ij}}\Biggr]\Biggr] \,,
\end{align}
where $\bar{\E}^{\otimes 2}[\cdot]$ denotes expectation with respect to two independently and uniformly chosen sets $C_1, C_2 \subseteq V$ of size $|C_1| = |C_2| = r$. This second moment depends crucially on $e(C_1 \cap C_2)$, the number of edges in the intersection of $C_1$ and $C_2$. Although this intersection is empty or very small with high probability it can be large with small probability, resulting in a very large second moment if the number of edges inside it is large as well.

To deal with this issue we use a more refined approach suggested by Ingster \cite{Ingster1997} and later used by Butucea and Ingster \cite{Butucea2011} and Arias-Castro and Verzelen \cite{Arias-Castro2014}. This approach relies on a truncation of the likelihood ratio
\begin{equation}
\tilde{L}
  \coloneqq{} \binom{n}{r}^{\!-1} \!\! \sum_{C \subseteq V,\, |C| = r} \1{\truncation[C]} L_C
  = \bar{\E}[\1{\truncation[C]} L_C] \,,
\end{equation}
where $L_C$ is as given by \eqref{eq:likelihood_ratio_part0} and $\truncation[C]$ is some truncation event. Using $\tilde{L} \leq L$, the triangle inequality, and the Cauchy-Schwarz inequality, we obtain the upper bound
\begin{equation}
\E_0[|L - 1|]
  \leq \E_0[|\tilde{L} - 1|] + \E_0[L - \tilde{L}]
  \leq \sqrt{\E_0[\tilde{L}^2] - 2 \E_0[\tilde{L}] + 1} + 1 - \E_0[\tilde{L}] \,.
\end{equation}
Therefore, $\bar{\risk}_n(\testlr) \to 1$ when both $\E_0[\tilde{L}] \to 1$ and $\E_0[\tilde{L}^2] \to 1$. So, the ideal truncation event should lower the variance of $\tilde{L}$ while still ensuring that the first moment of $\tilde{L}$ approaches $1$.

Intuitively, we would like to use the truncation event to prevent ``bad behavior'' at the intersection of two sets $C_1$ and $C_2$. However, we can only state the truncation event in terms of one of these sets. This creates a challenge. For a given set $C \subseteq V$, the potentially problematic intersections are sets $D \subseteq C$ for which $\E_0[e(D)]$ is large. We will denote by $\truncationset[C]$ (see \eqref{eq:truncationset_definition} below) this class of ``potentially problematic'' sets. The idea is then to construct the truncation event so that it removes the set $C$ from consideration if it contains a subset $D \in \truncationset[C]$ for which the number of edges $e(D)$ is significantly larger than its expectation $\E_0[e(D)]$.

To formalize this, it is helpful to express the likelihood ratio in a more convenient form. Namely,
\begin{align}
\label{eq:likelihood_ratio_part}
L_C(g)
  &=  \exp\biggl(\sum_{i < j \in C} A_{ij} \log\left(\frac{\scaling[C] p_{ij}}{p_{ij}}\right) + (1 - A_{ij}) \log\left(\frac{1 - \scaling[C]p_{ij}}{1 - p_{ij}}\right)\biggr)\\
  &= \exp\biggl(\sum_{i < j \in C} A_{ij} \theta_{ij}(\scaling[C] p_{ij}) - \Lambda_{ij}(\theta_{ij}(\scaling[C] p_{ij}))\biggr) \,,
\end{align}
with
\begin{equation}
\theta_{ij}(q) \coloneqq \log\left(\frac{q (1 - p_{ij})}{p_{ij} (1 - q)}\right)\,,
\qquad\text{and}\qquad
\Lambda_{ij}(\theta) \coloneqq \log\left(1 - p_{ij} + p_{ij} \e^\theta\right) \,.
\end{equation}
Note that $\Lambda_{ij}(\theta)$ is the cumulant generating function of $\text{Bern}(p_{ij})$, with Fenchel-Legendre transform given by
\begin{equation}
\label{eq:fenchel_lengendre_transform}
H_{p_{ij}}(q)
  = \sup_{x \geq 0} \left\{ q x - \Lambda_{ij}(x) \right\}
  = q \, \theta_{ij}(q) - \Lambda_{ij}(\theta_{ij}(q)) \,,
\qquad\text{for } q \in (p_{ij}, 1) \,,
\end{equation}
\noeqref{eq:fenchel_lengendre_transform}%
where $H_{p}(q) \coloneqq q \log\bigl(\frac{q}{p}\bigr) + (1 - q) \log\bigl(\frac{1-q}{1-p}\bigr)$ is the Kullback-Leibler divergence between $\text{Bern}(p)$ and $\text{Bern}(q)$.

Now, to construct the truncation event $\truncation[C]$, we begin by defining for each set $C \subseteq V$ a class of ``potentially problematic'' intersection sets as
\begin{equation}
\label{eq:truncationset_definition}
\truncationset[C] \coloneqq \left\{D \subseteq C : (\scaling[C] - 1)^2 \, \E_0[e(D)] > (1 - \epsilon/2) |D| \left(\log\left(\frac{n |D|}{r^2}\right) - b_n\right)\right\} \,,
\end{equation}
where $b_n \to \infty$ very slowly. For concreteness we will take $b_n = \log\log(n / r)$. Using this, we define the numbers $\cutoff[D]$ in the lemma below, the proof of this lemma is mainly technical and is therefore deferred to Section~\ref{subsec:proof_of_auxiliary_results}:

\begin{lemma}
\label{lem:cutoff_definition}
Let Assumption~\ref{ass:lower_bound_sparsity}, and either Assumption \ref{ass:lower_bound_maximum_inhomogeneity} or \ref{ass:lower_bound_maximum_community_small} hold. Then for any $C \subseteq V$ of size $|C| = r$ and $D \in \truncationset[C]$ there exists a unique number $\cutoff[D] \geq 1$, such that for $n$ large enough,
\begin{equation}
(1 + \epsilon) \E_0[e(D)] h(\cutoff[D] - 1) = |D| \log\left(\frac{n}{|D|}\right) \,.
\end{equation}
Moreover, $\cutoff[D]$ satisfies $\theta_{ij}(\cutoff[D] \, p_{ij}) \leq 2\theta_{ij}(\scaling[C] \, p_{ij})$ for every $i, j \in D$.
\end{lemma}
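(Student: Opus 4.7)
The plan is to prove the two parts separately. First, for existence and uniqueness of $\cutoff[D]$, I would note that the map $x \mapsto (1+\epsilon)\E_0[e(D)]\, h(x-1)$ is continuous and strictly increasing on $[1,\infty)$ (since $h'(y) = \log(1+y) > 0$ for $y > 0$), vanishes at $x = 1$, and diverges to $+\infty$ as $x \to \infty$. Since $|D|\log(n/|D|) > 0$ and $\E_0[e(D)] > 0$ for $|D| \geq 2$ (the case $|D| = 1$ is essentially vacuous, as $D \in \truncationset[C]$ would then force $\log(n|D|/r^2) < b_n$), the intermediate value theorem gives a unique $\cutoff[D] > 1$ solving the defining equation.

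For the $\theta_{ij}$ bound, I would combine the defining equation of $\cutoff[D]$ with the truncation condition $D \in \truncationset[C]$ to obtain
\begin{equation*}
h(\cutoff[D] - 1) < \frac{\log(n/|D|)}{(1+\epsilon)(1-\epsilon/2)\bigl(\log(n|D|/r^2) - b_n\bigr)}(\scaling[C] - 1)^2.
\end{equation*}
Writing $\log(n/|D|) = \log(n/r) + \log(r/|D|)$ and $\log(n|D|/r^2) = \log(n/r) - \log(r/|D|)$, with $b_n = \smallO(\log(n/r))$, the prefactor on the right tends to $1/((1+\epsilon)(1-\epsilon/2))$ provided $\log(r/|D|) = \smallO(\log(n/r))$. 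This holds under Assumption~\ref{ass:lower_bound_maximum_community_small} since $r = n^{\smallO(1)}$ makes $\log r$ negligible; under Assumption~\ref{ass:lower_bound_maximum_inhomogeneity}, its sparsity (iii) and inhomogeneity (ii) conditions force $\E_0[e(D)]$ to be too small for $D$ to belong to $\truncationset[C]$ once $|D|$ is much smaller than $r$, so attention can be restricted to the regime where $\log(r/|D|) = \smallO(\log(n/r))$. With the prefactor controlled, I would then use $h(x) = (1+\smallO(1))x^2/2$ as $x \to 0$ and $h(x) \asymp x \log x$ as $x \to \infty$ to deduce $\cutoff[D] - 1 \leq (\sqrt{2} + \smallO(1))(\scaling[C] - 1)$ when $\scaling[C] \to 1$, and $\log \cutoff[D] \leq (2 + \smallO(1))\log \scaling[C]$ when $\scaling[C] \to \infty$ (since then $\cutoff[D]\log\cutoff[D] = \bigO(\scaling[C]^2)$, from which the bound follows by taking logarithms).

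To translate to the $\theta_{ij}$ comparison, I would invoke Assumption~\ref{ass:lower_bound_sparsity} ($\scaling[C]^2 p_{ij} \to 0$) and expand
\begin{equation*}
\theta_{ij}(\rho p_{ij}) = \log \rho + \log(1 - p_{ij}) - \log(1 - \rho p_{ij}) = \log \rho + \bigO(\rho p_{ij}),
\end{equation*}
for $\rho \in \{\cutoff[D], \scaling[C]\}$, where the error term is $\smallO(\log \rho)$ (using $\log \rho \asymp \rho - 1$ in the limit $\rho \to 1$). Combining with the preceding comparison of $\cutoff[D]$ and $\scaling[C]$ then yields $\theta_{ij}(\cutoff[D] p_{ij}) \leq 2 \theta_{ij}(\scaling[C] p_{ij})$ for $n$ large enough, the leading constants $\sqrt{2}$ or $2$ being comfortably accommodated by the factor $2$ in the statement.

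The hard part will be controlling the ratio $\log(n/|D|)/(\log(n|D|/r^2) - b_n)$ in the regime where $|D|$ is much smaller than $r$, where it can otherwise diverge. This is precisely where the slowly-growing choice $b_n = \log\log(n/r)$ and the structural assumptions pull their weight, and careful case analysis across the various regimes of $\scaling[C]$ and $\cutoff[D]$ will be needed to verify the bound rigorously.
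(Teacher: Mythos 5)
Your overall strategy (intermediate value theorem for existence, then combine the defining equation with the membership condition $D\in\truncationset[C]$ to bound $\cutoff[D]$ against $\scaling[C]$, then translate via the expansion of $\theta_{ij}$) is in the right spirit and recovers the key ratio control that the paper packages as Lemma~\ref{lem:maximal_set_ratio}. However, your route differs from the paper's and, as written, has a genuine gap in the final translation step.

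The paper instead defines $\widetilde{q}_{ij}$ by $\theta_{ij}(\widetilde{q}_{ij}\,p_{ij}) = 2\theta_{ij}(\scaling[C]\,p_{ij})$ \emph{exactly}, proves $h(\widetilde{q}_{ij}-1) \geq (2+\smallO(1))(\scaling[C]-1)^2$ by a three-way case split on $\scaling[C]$, and then shows via $D\in\truncationset[C]$ that the defining function already exceeds $\log(n/|D|)$ at $\widetilde{q}_{ij}$. Monotonicity then forces $\cutoff[D] < \min_{i,j}\widetilde{q}_{ij}$, and monotonicity of $\theta_{ij}$ gives the $\theta$-inequality exactly, with no further approximation. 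Your approach inverts this: you solve for $\cutoff[D]$, derive $h(\cutoff[D]-1)\leq (\scaling[C]-1)^2/c\cdot(1+\smallO(1))$ with $c=(1+\epsilon)(1-\epsilon/2)$, and then try to pass to the $\theta$-comparison by Taylor expansion. This introduces multiplicative $(1+\smallO(1))$ corrections on both sides of the target inequality, which is where the gap appears.

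Concretely, for $\scaling[C]\to\infty$ you conclude $\log\cutoff[D] \leq (2+\smallO(1))\log\scaling[C]$ and assert this is ``comfortably accommodated by the factor $2$.'' But this multiplicative bound alone does not imply $\theta_{ij}(\cutoff[D] p_{ij}) \leq 2\theta_{ij}(\scaling[C]\,p_{ij})$: with $\theta_{ij}(\rho p_{ij}) = \log\rho\,(1+\smallO(1))$ on both sides, you only obtain $\theta_{ij}(\cutoff[D]p_{ij}) \leq (2+\smallO(1))\theta_{ij}(\scaling[C]p_{ij})$, which could sit above $2$. What one actually needs (and can extract from $c>1$) is the \emph{additive} margin: $\cutoff[D]\log\cutoff[D] \lesssim \scaling[C]^2/c$ forces $\cutoff[D] = \smallO(\scaling[C]^2)$, hence $\log\cutoff[D] \leq 2\log\scaling[C] - T_n$ with $T_n\to\infty$, and since the error in $\theta_{ij}$ is $\theta_{ij}(\rho p_{ij})-\log\rho = \bigO(\rho p_{ij}) = \smallO(1)$ (by Assumption~\ref{ass:lower_bound_sparsity}), this additive slack absorbs it. Your multiplicative phrasing discards exactly this margin. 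In addition, the intermediate case $\scaling[C]\to\alpha\in(1,\infty)$ is not addressed at all; there the controlling inequality is $h(\alpha^2-1)\geq 2(\alpha-1)^2$ (which the paper verifies in its Case~2), and it must be invoked to conclude $\cutoff[D]$ stays strictly below $\scaling[C]^2$. Finally, your ratio-control argument essentially reproves Lemma~\ref{lem:maximal_set_ratio}; it would be cleaner to cite it, since it is precisely where both structural Assumptions~\ref{ass:lower_bound_maximum_inhomogeneity} and~\ref{ass:lower_bound_maximum_community_small} enter.
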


Using the numbers $\cutoff[D] \geq 1$ and $\truncationset[C]$ from \eqref{eq:truncationset_definition}, we finally define the truncation events as
\begin{equation}
\label{eq:truncation_event}
\truncation[C]
  \coloneqq \left\{\sum_{i<j \in D} A_{ij} \, \theta_{ij}\bigl(\scaling[C] p_{ij}\bigr) \leq \sum_{i<j \in D} p_{ij} \cutoff[D] \, \theta_{ij}\bigl(\scaling[C] p_{ij}\bigr) \,,
\quad\text{for all } D \in \truncationset[C] \right\} \,.
\end{equation}
\noeqref{eq:truncation_event}%
Loosely speaking $\theta_{ij}\bigl(\scaling[C] p_{ij}\bigr) \approx \log(\scaling[C])$, so the above truncation event will remove all sets $C \subseteq V$ for which there exists a subset $D \in \truncationset[C]$ with $e(D) > \cutoff[D] \E_0[e(D)]$. Utilizing this truncation event, we need to show that both $\E_0[\tilde{L}] \to 1$ and $\E_0[\tilde{L}^2] \to 1$.

\paragraph{First truncated moment.} Here we show that $\E_0[\tilde{L}] \to 1$. Since we are simply considering a truncation of the likelihood, it follows from Fubini's theorem that
\begin{equation}
\label{eq:first_truncated_moment}
\E_0[\tilde{L}]
  = \bar{\E}[\E_0[\1{\truncation[C]} \, L_C]]
  = \bar{\E}[\P_C(\truncation[C])] \,.
\end{equation}
Hence, it suffices to show that $\P_C(\truncation[C]) \to 1$ for most $C \subseteq V$. Below we will show the slightly stronger result that $\min_{C\subseteq V, |C|=r} \P_C(\truncation[C]) \to 1$, which together with \eqref{eq:first_truncated_moment} shows that $\E_0[\tilde{L}] \to 1$.

Begin by noting that
\begin{equation}
\label{eq:scaling_bound}
\max_{C \subseteq V, |C|=r} \, \max_{i,j \in C} \, \left|\frac{\theta_{ij}\bigl(\scaling[C] p_{ij}\bigr)}{\log\left(\scaling[C]\right)} - 1\right|\to 0 \,,
\qquad
\text{as } n \to \infty \,.
\end{equation}
To see this, consider
\begin{equation}
\label{eq:scaling_bound_rewrite}
\frac{\theta_{ij}\bigl(\scaling[C] p_{ij}\bigr)}{\log\left(\scaling[C]\right)} - 1
  = \frac{\log\left(\frac{1 - p_{ij}}{1 - \scaling[C] p_{ij}}\right)}{\log(\scaling[C])} \,.
\end{equation}
Using Assumption~\ref{ass:lower_bound_sparsity} we see that the above converges to $0$ uniformly over all $i,j \in C$, if $\scaling[C]$ is bounded away from $1$. Otherwise, when $\scaling[C] \to 1$, we can simply use Taylor's theorem to obtain $\log\bigl(\frac{1 - p_{ij}}{1 - \scaling[C] p_{ij}}\bigr) / \log(\scaling[C]) \leq p_{ij} / (1 - p_{ij}) + 3 p_{ij}$ provided $p_{ij}$ is small enough (e.g., $p_{ij} \leq 1 / 3$ suffices). Hence, also in this case it follows from Assumption~\ref{ass:lower_bound_sparsity} that \eqref{eq:scaling_bound_rewrite} converges uniformly to $0$. Loosely speaking, this means that $\theta_{ij}\bigl(\scaling[C] p_{ij}\bigr) \asymp \log(\scaling[C])$ for all sets $C \subseteq V$ and $i,j \in C$. This, together with a union bound and Bennett's inequality, allows us to control $\P_C(\truncation[C])$. Indeed,
\begin{align}
1 - \P_C(\truncation[C])
  &\leq \sum_{D \in \truncationset[C]}
\P_C\biggl(\sum_{i<j \in D} A_{ij} \theta_{ij}\bigl(\scaling[C] p_{ij}\bigr) > \sum_{i<j \in D} p_{ij} \cutoff[D] \theta_{ij}\bigl(\scaling[C] p_{ij}\bigr)\biggr)\\
  &\leq \sum_{D \in \truncationset[C]} \P_C\biggl(\sum_{i<j \in D} A_{ij} > (1 + \smallO(1))\cutoff[D] \sum_{i<j \in D} p_{ij}\biggr)\\
  &\leq \sum_{D \in \truncationset[C]} \exp\left(- \E_C[e(D)] \, h\!\left((1 + \smallO(1))\left(\frac{\cutoff[D]}{\scaling[C]} - 1\right)\right)\right) \\
  &= \sum_{D \in \truncationset[C]} \exp\left(- (1 + \smallO(1)) \E_C[e(D)] \, h\!\left(\frac{\cutoff[D]}{\scaling[C]} - 1\right)\right) \,,
\end{align}
where the last step uses a property of the $h$ function, which ensures that for $t \geq 1$, $x \geq 0$ we have $\sqrt{t}h(x)\leq h(t x) \leq t^2h(x)$.

To show that this vanishes we need the following lemma, the proof of which is mainly technical and therefore deferred to Section~\ref{subsec:proof_of_auxiliary_results}. We remark that the definition of $a_n$ in this lemma comes from the exponent in \eqref{eq:relation_defining_an} below.
\begin{restatable}{lemma}{cutoffproperty}
\label{lem:cutoff_property}
Define the sequence $a_n$ as
\begin{equation}
a_n \coloneqq \min_{C \subseteq V, \, |C| = r}  \min_{D \in \truncationset[C]}
\left((1 - \epsilon) \frac{\E_C[e(D)]}{|D|} \, h\!\left(\frac{\cutoff[D]}{\scaling[C]} - 1\right) - \log\left(\frac{r}{|D|}\right)\right)\,.
\end{equation}
When \eqref{eq:lower_bound_condition}, Assumption~\ref{ass:lower_bound_sparsity}, and either Assumption \ref{ass:lower_bound_maximum_inhomogeneity} or \ref{ass:lower_bound_maximum_community_small} hold, then $a_n \to \infty$.
\end{restatable}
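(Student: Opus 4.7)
My plan is to exploit the algebraic identity
\begin{equation*}
h(\cutoff[D] - 1) = \beta_D \, h(\scaling[C] - 1) + \scaling[C] \, h(\beta_D - 1) + (\scaling[C] - 1)(\beta_D - 1), \qquad \beta_D := \cutoff[D]/\scaling[C],
\end{equation*}
which one verifies directly by expanding both sides and collecting terms, in combination with the defining equation of $\cutoff[D]$ from Lemma~\ref{lem:cutoff_definition}. Multiplying this identity by $\E_0[e(D)]$ and substituting $\E_0[e(D)] \, h(\cutoff[D]-1) = |D|\log(n/|D|)/(1+\epsilon)$ produces the master identity
\begin{equation*}
\E_C[e(D)]\, h(\beta_D - 1) = \frac{|D|\log(n/|D|)}{1+\epsilon} - \beta_D\, \E_0[e(D)]\, h(\scaling[C]-1) - (\scaling[C]-1)(\beta_D-1)\, \E_0[e(D)].
\end{equation*}
Two a~priori facts will be used throughout. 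First, comparing the defining equation of $\cutoff[D]$ with \eqref{eq:lower_bound_condition} gives $h(\cutoff[D]-1) \geq h(\scaling[C]-1)/(1-\epsilon^2)$, and hence $\beta_D \geq 1$. Second, Lemma~\ref{lem:cutoff_definition}'s inequality $\theta_{ij}(\cutoff[D] p_{ij}) \leq 2\theta_{ij}(\scaling[C] p_{ij})$ combined with Assumption~\ref{ass:lower_bound_sparsity} yields $\cutoff[D] \leq (1+\smallO(1))\scaling[C]^2$, so $\beta_D \leq (1+\smallO(1))\scaling[C]$.

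I would then split into the three asymptotic regimes of $\scaling[C]$ familiar from the proof of Theorem~\ref{thm:known_probability_scan_test_powerful}. When $\scaling[C] \to 1$, using $h(x-1) \asymp (x-1)^2/2$, the chain $h(\cutoff[D]-1) \geq h(\scaling[C]-1)/(1-\epsilon^2)$ forces $\beta_D - 1 \geq \eta_\epsilon(\scaling[C]-1)$ with $\eta_\epsilon := 1/\sqrt{1-\epsilon^2} - 1 > 0$, so $h(\beta_D-1) \geq (\eta_\epsilon^2/2)(\scaling[C]-1)^2$; combined with the truncation-set lower bound $(\scaling[C]-1)^2 \E_0[e(D)] \geq (1-\epsilon/2)|D|(\log(n|D|/r^2) - b_n)$ from \eqref{eq:truncationset_definition}, this yields $\E_C[e(D)] \, h(\beta_D-1) \geq c_\epsilon |D|(\log(n|D|/r^2) - b_n)$. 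When $\scaling[C]$ stays bounded in $(1,\infty)$, every $h$-value is of order $1$ and the master identity gives the bound directly. When $\scaling[C] \to \infty$, the asymptotic $h(x-1) \asymp x\log x$ combined with $\beta_D \leq (1+\smallO(1))\scaling[C]$ ensures the leading $|D|\log(n/|D|)/(1+\epsilon)$ term dominates the two subtracted ones. In each regime one subtracts $\log(r/|D|) = \log(n/|D|) - \log(n/r)$ from the resulting bound on $\E_C[e(D)] \, h(\beta_D-1)/|D|$ and uses $\log(n/r) \to \infty$, which is guaranteed by Assumption~\ref{ass:lower_bound_maximum_inhomogeneity}~\ref{ass:lbl:lower_bound_maximum_inhomogeneity_size} or Assumption~\ref{ass:lower_bound_maximum_community_small}~\ref{ass:lbl:lower_bound_maximum_community_small_size}, to force $a_n \to \infty$.

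The hard part will be the regime of very small $|D|$, where $\log(n|D|/r^2) - b_n \leq 0$: every such set belongs to $\truncationset[C]$ trivially and the truncation-set inequality provides no lower bound on $\E_0[e(D)]$. For these sets the argument must proceed from the master identity alone, exploiting $\beta_D \leq (1+\smallO(1))\scaling[C]$ to show the coefficient $1/(1+\epsilon) - (1-\epsilon)\beta_D/\scaling[C]$ in front of $|D|\log(n/|D|)$ remains at least $\epsilon^2/(1+\epsilon) > 0$, while the cross-term $(\scaling[C]-1)(\beta_D-1)\E_0[e(D)]$ is controlled by the algebraic relation $(\scaling[C]-1)(\beta_D-1) \leq h(\cutoff[D]-1)$ and the upper bound on $\E_0[e(D)]$ dictated by \eqref{eq:lower_bound_condition}. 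Reconciling the three regimes while uniformly handling this cross-term is the delicate core of the argument, which is presumably what is alluded to by the ``highly careful truncation argument'' remark following Theorem~\ref{thm:lower_bound}.
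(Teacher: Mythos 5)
Your algebraic identity, the master identity, the bounds $\beta_D=\cutoff[D]/\scaling[C]\geq 1$ and $\cutoff[D]\leq(1+\smallO(1))\scaling[C]^2$, and the three-regime skeleton are all sound and parallel the paper's proof. The genuine gap is that your argument never uses Assumption~\ref{ass:lower_bound_maximum_inhomogeneity}~\ref{ass:lbl:lower_bound_maximum_inhomogeneity_inhomogeneity}--\ref{ass:lbl:lower_bound_maximum_inhomogeneity_sparsity} or Assumption~\ref{ass:lower_bound_maximum_community_small}~\ref{ass:lbl:lower_bound_maximum_community_small_sparsity}: you invoke the assumptions only to get $\log(n/r)\to\infty$, whereas the paper funnels them through Lemma~\ref{lem:maximal_set_ratio}, which shows that every $D\in\truncationset[C]$ satisfies $\frac{\log(r/|D|)}{\log(n/r)}\bigl(\log(\scaling[C])\vee 1\bigr)=\smallO(1)$ and that $\log(n/r)/\log(\scaling[C])\to\infty$. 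Without this input the conclusion fails along your route. Concretely: in your regime $\scaling[C]\to1$ the constant you extract is tiny (of order $\epsilon^4$), so you need $\log(r/|D|)$ to be a small fraction of $\log(n/|D|)$; under Assumption~\ref{ass:lower_bound_maximum_inhomogeneity} with $r$ of order $n^{1/2-\delta}$, a set $D\in\truncationset[C]$ of bounded size would have $\log(r/|D|)$ comparable to $\log(n/r)$ and your lower bound tends to $-\infty$ --- one must first prove that such small $D$ cannot lie in $\truncationset[C]$, which is exactly what Assumption~\ref{ass:lower_bound_maximum_inhomogeneity}~\ref{ass:lbl:lower_bound_maximum_inhomogeneity_inhomogeneity} delivers ($|D|\geq r/(n/r)^{\gamma_n}$). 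Likewise, in your regime $\scaling[C]\to\infty$ the claim that the term $|D|\log(n/|D|)/(1+\epsilon)$ dominates the subtracted ones is false in general: if $\cutoff[D]\asymp K\scaling[C]$ with $K$ bounded, then $\beta_D\,\E_0[e(D)]\,h(\scaling[C]-1)=(1-\smallO(1))\,|D|\log(n/|D|)/(1+\epsilon)$, the two leading terms nearly cancel, and the surviving margin is only of order $|D|\log(n/|D|)/\log(\scaling[C])$; to beat $\log(r/|D|)$ you then need $\log(r/|D|)\log(\scaling[C])=\smallO(\log(n/r))$, which requires Assumption~\ref{ass:lower_bound_maximum_community_small}~\ref{ass:lbl:lower_bound_maximum_community_small_sparsity} via $\scaling[C]\leq 1/\overline{p}_C$.

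Two further points. The case you single out as the delicate core, $\log(n|D|/r^2)-b_n\leq 0$, is vacuous: both Assumption~\ref{ass:lower_bound_maximum_inhomogeneity}~\ref{ass:lbl:lower_bound_maximum_inhomogeneity_size} and Assumption~\ref{ass:lower_bound_maximum_community_small}~\ref{ass:lbl:lower_bound_maximum_community_small_size} force $n/r^2\to\infty$ polynomially, so $\log(n|D|/r^2)-b_n\geq\log(n/r^2)-\log\log(n/r)\to\infty$ for every $D$. Moreover, the fix you sketch there is miscalibrated: bounding the second term of the master identity via \eqref{eq:lower_bound_condition} contributes $(1-\epsilon)\beta_D\,|D|\log(n/|D|)$, not $(1-\epsilon)(\beta_D/\scaling[C])\,|D|\log(n/|D|)$, and since $\beta_D>1$ (and can exceed $1/(1-\epsilon^2)$) the claimed positive coefficient $\epsilon^2/(1+\epsilon)$ does not survive. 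The real delicate work is precisely the content of Lemma~\ref{lem:maximal_set_ratio}, which your proposal omits.
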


Using Lemma~\ref{lem:cutoff_property} and grouping the sets $D \in \truncationset[C]$ by their size $|D|$, together with the bound on the binomial coefficient $\binom{r}{k} \leq \left(\frac{r\,\e}{k}\right)^k$ we conclude that, for $n$ large enough,
\begin{align}
\hspace{14pt}&\hspace{-14pt}
\hspace{4pt}1 - \min_{C\subseteq V, |C|=r} \P_C(\truncation[C])\\
  &\leq \max_{C\subseteq V, |C|=r} \sum_{D \in \truncationset[C]} \exp\left(- (1 + \smallO(1)) \E_C[e(D)] \, h\!\left(\frac{\cutoff[D]}{\scaling[C]} - 1\right)\right)\\
  &= \max_{C\subseteq V, |C|=r} \sum_{k=1}^r \sum_{D \in \truncationset[C], \, |D| = k} \exp\left(- (1 + \smallO(1)) \E_C[e(D)] \, h\!\left(\frac{\cutoff[D]}{\scaling[C]} - 1\right)\right)\\
  &= \max_{C\subseteq V, |C|=r} \sum_{k=1}^r \sum_{D \in \truncationset[C], \, |D| = k} \frac{1}{(r\e/k)^k}\exp\biggl(- k \biggl((1 + \smallO(1)) \frac{\E_C[e(D)]}{k}\, h\!\left(\frac{\cutoff[D]}{\scaling[C]} - 1\right) - \log(r\e/k)\biggr)\biggr)\notag\\
  &\leq \max_{C\subseteq V, |C|=r} \sum_{k=1}^r \binom{r}{k}^{-1} \!\!\! \sum_{D \in \truncationset[C], \, |D| = k} \exp\biggl(- k \biggl((1 + \smallO(1)) \frac{\E_C[e(D)]}{k}\, h\!\left(\frac{\cutoff[D]}{\scaling[C]} - 1\right) - \log(r\e/k)\biggr)\biggr)\notag\\
\label{eq:relation_defining_an}
  &\leq \sum_{k = 1}^r \binom{r}{k}^{-1} \!\!\! \sum_{D \subseteq C, \, |D| = k} \exp\left(-k \, (a_n - 1)\right)\\
  &= \sum_{k = 1}^r \exp\left(-k \, (a_n - 1)\right)
  \leq \frac{\exp(-(a_n-1))}{1-\exp(-(a_n-1))}
  \to 0 \,,
\end{align}
where the final step follows because $a_n \to \infty$ by Lemma~\ref{lem:cutoff_property}. Hence, from \eqref{eq:first_truncated_moment} we see that $\E_0[\tilde{L}] \to 1$.

\paragraph{Second truncated moment.} Here we show that $\E_0[\tilde{L}^2] \to 1$. In other words,
\begin{equation}
\E_0[\tilde{L}^2]
  = \bar{\E}^{\otimes 2}\left[\E_0\left[\1{\truncation[C_1]} \1{\truncation[C_2]} L_{C_1} L_{C_2}\right]\right] 
  \leq 1 + \smallO(1) \,,
\end{equation}
where we recall that $\bar{\E}^{\otimes 2}[\cdot]$ denotes expectation with respect to two independently and uniformly chosen sets $C_1, C_2 \subseteq V$ of size $|C_1| = |C_2| = r$. Let $D = C_1 \cap C_2$, then using \eqref{eq:likelihood_ratio_part} this becomes
\begin{align}
\E_0[\tilde{L}^2]
  &= \bar{\E}^{\otimes 2}\left[\E_0\left[\1{\truncation[C_1]} \1{\truncation[C_2]} L_{C_1} L_{C_2}\right]\right]\\
  &= \begin{multlined}[t][0.85\displaywidth]
  \bar{\E}^{\otimes 2}\Bigg[\E_0\Bigg[\1{\truncation[C_1] \cap \truncation[C_2]} \exp\Bigg(\sum_{i < j \in D} A_{ij} \left(\theta_{ij}\bigl(\scaling[C_1] p_{ij}\bigr) + \theta_{ij}\bigl(\scaling[C_2] p_{ij}\bigr)\right)\\*
    {} - \Lambda_{ij}\bigl(\theta_{ij}\bigl(\scaling[C_1] p_{ij}\bigr)\bigr) - \Lambda_{ij}\bigl(\theta_{ij}\bigl(\scaling[C_2] p_{ij}\bigr)\bigr)\Bigg)\Bigg]\Bigg] \,,
  \end{multlined}
\end{align}
where we note that the sum runs only over $i < j \in D = C_1 \cap C_2$. The remaining terms in the sum above (i.e., the terms $i, j \in C_1 \cup C_2$ with $i \notin D$ or $j \notin D$) can all be factorized because the $A_{ij}$ are independent, and all these terms have a zero contribution because their expectation equals one.

Using the Cauchy-Schwarz inequality inside the expectation $\bar{\E}^{\otimes 2}[\cdot]$, and that the sets $C_1$ and $C_2$ are chosen independently, we obtain
\begin{align}
\E_0[\tilde{L}^2]
  &= \begin{multlined}[t][0.85\displaywidth]
  \bar{\E}^{\otimes 2}\Bigg[\E_0\Bigg[\1{\truncation[C_1]} \exp\Bigg(\sum_{i < j \in D} A_{ij} \theta_{ij}\bigl(\scaling[C_1] p_{ij}\bigr) - \Lambda_{ij}\bigl(\theta_{ij}\bigl(\scaling[C_1] p_{ij}\bigr)\bigr)\Bigg)\notag\\*
     {}{\scriptstyle\times} \; \1{\truncation[C_2]} \exp\Bigg(\sum_{i < j \in D} A_{ij} \theta_{ij}\bigl(\scaling[C_2] p_{ij}\bigr) - \Lambda_{ij}\bigl(\theta_{ij}\bigl(\scaling[C_2] p_{ij}\bigr)\bigr)\Bigg)\Bigg]\Bigg]\notag
  \end{multlined}\\[0.5ex]
  &\leq \begin{multlined}[t][0.85\displaywidth]
  \bar{\E}^{\otimes 2}\Bigg[\E_0\Bigg[\1{\truncation[C_1]} \exp\Bigg(\sum_{i < j \in D} 2 A_{ij} \theta_{ij}\bigl(\scaling[C_1] p_{ij}\bigr) - 2\Lambda_{ij}\bigl(\theta_{ij}\bigl(\scaling[C_1] p_{ij}\bigr)\bigr)\Bigg)\Bigg]^{1/2}\notag\\*
    {}{\scriptstyle\times}\, \E_0\Bigg[\1{\truncation[C_2]} \exp\Bigg(\sum_{i < j \in D} 2 A_{ij} \theta_{ij}\bigl(\scaling[C_2] p_{ij}\bigr) - 2\Lambda_{ij}\bigl(\theta_{ij}\bigl(\scaling[C_2] p_{ij}\bigr)\bigr)\Bigg)\Bigg]^{1/2} \, \Bigg]\notag
  \end{multlined}\\[0.5ex]
  &= \bar{\E}^{\otimes 2}\Bigg[\E_0\Bigg[\1{\truncation[C_1]} \exp\Bigg(\sum_{i < j \in D} 2 A_{ij} \theta_{ij}\bigl(\scaling[C_1] p_{ij}\bigr) - 2\Lambda_{ij}\bigl(\theta_{ij}\bigl(\scaling[C_1] p_{ij}\bigr)\bigr)\Bigg)\Bigg]\Bigg] \,.
\end{align}
Next, we split this expectation into two parts based on whether $D \notin \truncationset[C_1]$ or $D \in \truncationset[C_1]$. Thus we have the partition
\begin{equation}
\E_0[\tilde{L}^2]
  \leq \textup{\textsf{P}}_1 + \textup{\textsf{P}}_2 \,,
\end{equation}
where
\begin{align}
\textup{\textsf{P}}_1
  &\coloneqq \bar{\E}^{\otimes 2}\Bigg[\1{\{D \notin \truncationset[C_1]\}} \, \E_0\Bigg[\1{\truncation[C_1]} \exp\Bigg(\sum_{i < j \in D} 2 A_{ij} \theta_{ij}\bigl(\scaling[C_1] p_{ij}\bigr) - 2\Lambda_{ij}\bigl(\theta_{ij}\bigl(\scaling[C_1] p_{ij}\bigr)\bigr)\Bigg)\Bigg]\Bigg] \,,\\
\textup{\textsf{P}}_2
  &\coloneqq \bar{\E}^{\otimes 2}\Bigg[\1{\{D \in \truncationset[C_1]\}} \, \E_0\Bigg[\1{\truncation[C_1]} \exp\Bigg(\sum_{i < j \in D} 2 A_{ij} \theta_{ij}\bigl(\scaling[C_1] p_{ij}\bigr) - 2\Lambda_{ij}\bigl(\theta_{ij}\bigl(\scaling[C_1] p_{ij}\bigr)\bigr)\Bigg)\Bigg]\Bigg] \,.
\end{align}
Using this split, we first show that $\textup{\textsf{P}}_1 \leq 1 + \smallO(1)$ and then show that $\textup{\textsf{P}}_2 \leq \smallO(1)$.

\paragraph{\normalfont\emph{Part 1}:}
Here we show that $\textup{\textsf{P}}_1 \leq 1 + \smallO(1)$. In this part we can simply ignore the truncation events $\truncation[C_1]$ and obtain the bound
\begin{align}
\textup{\textsf{P}}_1
  &\leq \bar{\E}^{\otimes 2}\Bigg[\1{\{D \notin \truncationset[C_1]\}} \, \E_0\Bigg[ \exp\Bigg(\sum_{i < j \in D} 2 A_{ij} \theta_{ij}\bigl(\scaling[C_1] p_{ij}\bigr) - 2\Lambda_{ij}\bigl(\theta_{ij}\bigl(\scaling[C_1] p_{ij}\bigr)\bigr)\Bigg)\Bigg]\Bigg]\\
  &\leq \bar{\E}^{\otimes 2}\Bigg[\1{\{D \notin \truncationset[C_1]\}} \exp\Bigg(\sum_{i < j \in D} \Delta_{ij}^{\!\scriptscriptstyle(1)}\Bigg)\Bigg] \,,
\end{align}
where
\begin{equation}
\Delta_{ij}^{\!\scriptscriptstyle(1)}
  \vcentcolon= \log\left(1 + \frac{(\scaling[C_1] p_{ij} - p_{ij})^2}{p_{ij} (1 - p_{ij})}\right) \,.
\end{equation}
Then using $\log(1 + x) \leq x$ and by Assumption~\ref{ass:lower_bound_sparsity}, uniformly over all $i,j \in D$,
\begin{equation}
\Delta_{ij}^{\!\scriptscriptstyle(1)}
  \leq \log\bigl(1 + (1 + \smallO(1)) (\scaling[C_1] - 1)^2 p_{ij}\bigr)
  \leq (1 + \smallO(1)) (\scaling[C_1] - 1)^2 p_{ij} \,.
\end{equation}
Now, by definition of $\truncationset[C_1]$ it follows that $(1 + \smallO(1)) (\scaling[C_1] - 1)^2 \, \E_0[e(D)] \leq |D| \bigl(\log\bigl(\frac{n |D|}{r^2}\bigr) - b_n\bigr)$ for every $D \notin \truncationset[C_1]$. Therefore
\begin{align}
\textup{\textsf{P}}_1
  &\leq \bar{\E}^{\otimes 2}\left[\1{\{D \notin \truncationset[C_1]\}} \exp\Bigl((1 + \smallO(1)) (\scaling[C_1] - 1)^2 \E_0[e(D)]\Bigr)\right]\\
  &\leq \bar{\E}^{\otimes 2}\left[\1{\{|D|\leq 1\}}+\1{\{|D|>1\}}\exp\left(|D| \left(\log\!\left(\frac{n |D|}{r^2}\right) - b_n\right)\right)\right]\\
  &\leq \bar{\P}^{\otimes 2}(|D| \leq 1) + \sum_{k=2}^r \exp\left(k \left(\log\!\left(\frac{n k}{r^2}\right) - b_n\right)\right) \bar{\P}^{\otimes 2}(|D| = k)\\
\label{eq:second_truncated_moment_part1_bound}
  &\leq 1 + \sum_{k=2}^r \exp\left(k \left(\log\!\left(\frac{n k}{r^2}\right) - b_n\right)\right) \bar{\P}^{\otimes 2}(|D| = k) \,.
\end{align}
Note that $|D| = |C_1 \cap C_2|$ has a hypergeometric distribution under $\bar{\P}^{\otimes 2}$, hence
\begin{align}
\bar{\P}(|D| = k)
  &= \frac{\binom{r}{k} \binom{n-r}{r-k}}{\binom{n}{r}}
  = \left((1 + \smallO(1)) \, \frac{r \e}{k} \, \frac{r-k}{n-r}\right)^k\\
\label{eq:hypergeometric_pdf_bound}
  &\leq \exp\left(- k \left(\log\!\left(\frac{n k}{r^2}\right) + \bigO(1)\right)\right) \,.
\end{align}
Plugging this into \eqref{eq:second_truncated_moment_part1_bound}, we obtain
\begin{align}
\textup{\textsf{P}}_1  &\leq 1 + \sum_{k=2}^r \exp\left(k \left(\log\!\left(\frac{n k}{r^2}\right) - b_n - \log\!\left(\frac{n k}{r^2}\right) + \bigO(1)\right)\right)\\
  &\leq 1 + \sum_{k=2}^r \exp\Bigl(k\bigl(\bigO(1) - b_n\bigr)\Bigr)
  \leq 1 + \smallO(1) \,,
\end{align}
where the final step follows because $b_n = \log\log(n / r) \to \infty$.

\paragraph{\normalfont\emph{Part 2}:}
Here we show that $\textup{\textsf{P}}_2 \leq \smallO(1)$. First, define
\begin{equation}
\label{eq:interpolate_between_cutoff}
\xi \coloneqq \frac{1}{2} \mspace{2mu} \frac{\log(\cutoff[D])}{\log(\scaling[C_1])} \,,
\end{equation}
where $\cutoff[D]$ was defined in Lemma~\ref{lem:cutoff_definition}. Then, by the same reasoning as in \eqref{eq:scaling_bound},
\begin{equation}
\label{eq:uniform_bound_on_interpolate_between_cutoff}
\max_{C_1 \subseteq V, |C_1|=r} \, \max_{D \in \truncationset[C_1]} \, \max_{i,j \in D} \, 
\left|\frac{\log(\cutoff[D]) / \log(\scaling[C_1])}{\theta_{ij}(\cutoff[D] p_{ij}) / \theta_{ij}(\scaling[C_1] p_{ij})} - 1\right|\to 0 \,,
\qquad
\text{as } n \to \infty \,.
\end{equation}
Loosely speaking, this means that, $\xi \asymp \frac{\theta_{ij}(\cutoff[D] p_{ij})}{2 \theta_{ij}(\scaling[C_1] p_{ij})} \leq 1$ uniformly over $i,j \in D$.

By definition of the truncation event $\truncation[C_1]$ in \eqref{eq:truncation_event}, for any $D \in \truncationset[C_1]$,
\begin{equation}
\sum_{i<j \in D} A_{ij} \, \theta_{ij}\bigl(\scaling[C] p_{ij}\bigr) \leq \sum_{i<j \in D} p_{ij} \cutoff[D] \, \theta_{ij}\bigl(\scaling[C] p_{ij}\bigr) \,.
\end{equation}
Then for $x \in [0, 1]$, we obtain the bound
\begin{align}
\textup{\textsf{P}}_2
  &= \bar{\E}^{\otimes 2}\Bigg[\1{\{D \in \truncationset[C_1]\}} \, \E_0\Bigg[\1{\truncation[C_1]} \exp\Bigg(\sum_{i < j \in D} 2 A_{ij} \theta_{ij}\bigl(\scaling[C_1] p_{ij}\bigr) - 2\Lambda_{ij}\bigl(\theta_{ij}\bigl(\scaling[C_1] p_{ij}\bigr)\bigr)\Bigg)\Bigg]\Bigg]\notag\\
  &\leq \begin{multlined}[t][0.92\displaywidth]
  \bar{\E}^{\otimes 2}\Bigg[\1{\{D \in \truncationset[C_1]\}} \, \E_0\Bigg[\exp\Bigg(\sum_{i < j \in D} 2 \theta_{ij}\bigl(\scaling[C_1] p_{ij}\bigr) \Bigl[x A_{ij} + (1 - x) \cutoff[D] p_{ij}\Bigr]\notag\\*[-0.6ex]
  {} - 2\Lambda_{ij}\bigl(\theta_{ij}\bigl(\scaling[C_1] p_{ij}\bigr)\bigr)\Bigg)\Bigg] \hphantom{\,.}\notag
  \end{multlined}\\
  &= \begin{multlined}[t][0.92\displaywidth]
  \bar{\E}^{\otimes 2}\Bigg[\1{\{D \in \truncationset[C_1]\}} \, \exp\Bigg(\sum_{i < j \in D} \Lambda_{ij}\bigl(2 \theta_{ij}\bigl(\scaling[C_1] p_{ij}\bigr) x\bigr)\notag\\*[-0.3ex]
  {} + \bigl(2 \theta_{ij}\bigl(\scaling[C_1] p_{ij}\bigr) - 2 \theta_{ij}\bigl(\scaling[C_1] p_{ij}\bigr)x\bigr) \cutoff[D] p_{ij} - 2\Lambda_{ij}\bigl(\theta_{ij}\bigl(\scaling[C_1] p_{ij}\bigr)\bigr)\Bigg)\Bigg] \,.\notag
  \end{multlined}
\end{align}
To obtain the best possible bound we optimize the above with respect to $x$. Here it can be seen from \eqref{eq:fenchel_lengendre_transform} that each individual term in the sum is minimal when $x = \frac{\theta_{ij}(\cutoff[D] p_{ij})}{2 \theta_{ij}(\scaling[C_1] p_{ij})}$. Therefore, by \eqref{eq:uniform_bound_on_interpolate_between_cutoff} it follows that the overall optimum is attained at $x = (1 + \smallO(1)) \xi$, where $\xi$ was defined in \eqref{eq:interpolate_between_cutoff}. Plugging this in, and using \eqref{eq:uniform_bound_on_interpolate_between_cutoff}, gives
\begin{align}
\textup{\textsf{P}}_2
  &\leq \bar{\E}^{\otimes 2}\Bigg[\1{\{D \in \truncationset[C_1]\}} \, \exp\Biggl(\sum_{i < j \in D} \Delta_{ij}^{\!\scriptscriptstyle(2)}\Biggr)\Bigg] \,,
\end{align}
where
\begin{align}
\hspace{4pt}\Delta_{ij}^{\!\scriptscriptstyle(2)}
  &\vcentcolon= \Bigl(\Lambda_{ij}(\theta_{ij}\bigl(\cutoff[D] p_{ij}\bigr)) - \cutoff[D] p_{ij} \theta_{ij}\bigl(\cutoff[D] p_{ij}\bigr)\Bigr)
   - 2 \Bigl(\Lambda_{ij}\bigl(\theta_{ij}\bigl(\scaling[C_1] p_{ij}\bigr)\bigr) - \cutoff[D] p_{ij} \theta_{ij}\bigl(\scaling[C_1] p_{ij}\bigr)\Bigr)\notag\\
  &\hphantom{\vcentcolon}= - H_{p_{ij}}(\cutoff[D] p_{ij}) - 2 \left(H_{\scaling[C_1] p_{ij}}(\cutoff[D] p_{ij}) - H_{p_{ij}}(\cutoff[D] p_{ij})\right)\notag\\
\label{eq:delta2}
  &\hphantom{\vcentcolon}= H_{p_{ij}}(\cutoff[D] p_{ij}) - 2H_{\scaling[C_1] p_{ij}}(\cutoff[D] p_{ij}) \,,
\end{align}
where we have used \eqref{eq:fenchel_lengendre_transform} in the second equality. To relate the Kullback-Leibler divergence $H_{p}(q)$, appearing in \eqref{eq:delta2}, to the function $h(x)$ from \eqref{eq:poisson_rate_function} we need the following lemma, the proof of which is deferred to Section~\ref{subsec:proof_of_auxiliary_results}:
\begin{restatable}{lemma}{hratio}
\label{lem:h_ratio}
For any $0 < p < q < 1/2$ (possibly depending on $n$) it follows that,
\begin{equation}
\left| \frac{H_{p}(q)}{p \mspace{1mu} h\left(\frac{q}{p} - 1\right)} - 1 \right|
  \leq \bigO\left(p + q\right) \,,
\end{equation}
where $H_{p}(q)$ is the Kullback-Leibler divergence between $\text{Bern}(p)$ and $\text{Bern}(q)$, and $h(x)$ is given in \eqref{eq:poisson_rate_function}.
\end{restatable}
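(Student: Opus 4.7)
The plan is to write $H_p(q) = p \mspace{1mu} h(q/p - 1) + R(p, q)$, where
\[
R(p, q) \coloneqq (1-q)\log\!\left(\frac{1-q}{1-p}\right) + (q - p)
\]
is the ``Bernoulli correction'' to the Poisson Kullback--Leibler divergence $p \mspace{1mu} h(q/p - 1)$. (A direct calculation using $p \mspace{1mu} h(q/p - 1) = q\log(q/p) - (q-p)$ confirms this decomposition.) The proof then reduces to controlling $R(p, q)$ from above and $p \mspace{1mu} h(q/p - 1)$ from below, the ratio of the two bounds being $\bigO(p + q)$.

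For the upper bound on $R$, I would fix $p$ and view $R$ as a function of $q$ alone. A short computation yields $R(p, p) = 0$, $\partial_q R|_{q = p} = \log((1-p)/(1-q))|_{q=p} = 0$, and $\partial_q^2 R = 1/(1-q)$. Taylor's theorem with the Lagrange remainder then gives $R(p, q) = (q - p)^2/(2(1 - \xi))$ for some $\xi \in (p, q)$. Since $q < 1/2$, this implies the nonnegative upper bound $0 \leq R(p, q) \leq (q - p)^2/(2(1 - q))$; in particular $R \geq 0$, so the absolute value in the lemma can be dropped from the start.

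For the lower bound on $p \mspace{1mu} h(q/p - 1)$, I would invoke the standard Bennett-type inequality $h(y) \geq y^2/(2(1 + y))$ for $y \geq 0$, whose proof is a short second-derivative argument: the function $h(y) - y^2/(2(1 + y))$ vanishes together with its first derivative at $y = 0$, while its second derivative simplifies to $y(2 + y)/(1 + y)^3 \geq 0$. Substituting $y = (q - p)/p$ and multiplying through by $p$ yields $p \mspace{1mu} h(q/p - 1) \geq (q - p)^2/(2 q)$.

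Combining the two estimates gives
\[
0 \leq \frac{H_p(q)}{p \mspace{1mu} h(q/p - 1)} - 1 = \frac{R(p,q)}{p \mspace{1mu} h(q/p - 1)} \leq \frac{(q - p)^2/(2(1 - q))}{(q - p)^2/(2 q)} = \frac{q}{1 - q} \leq 2 q \leq 2(p + q) \,,
\]
which is the desired $\bigO(p + q)$ bound. The only step that is more than a direct computation is the lower bound on $h$, but this is a classical inequality; any bound of the form $h(y) \geq c \mspace{1mu} y^2/(1 + y)$ with $c > 0$ would suffice to run the same argument, so this is not really an obstacle.
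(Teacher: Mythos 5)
Your proposal is correct, and it uses the same decomposition as the paper: the difference $H_p(q) - p\,h(q/p-1) = (q-p) + (1-q)\log\bigl(\tfrac{1-q}{1-p}\bigr)$ (the paper calls it $f_p(q)$, you call it $R(p,q)$), controlled by a Taylor expansion in $q$ around $p$. Where you diverge is in how the two factors are estimated. The paper expands to third order, obtaining $f_p(q) = \tfrac{(q-p)^2}{2(1-p)} + \tfrac{(q-p)^3}{6(1-\xi)^2}$, and then splits into two cases according to whether $q/p \leq 5$ or $q/p > 5$, using the crude bounds $h(x-1) \geq (x-1)^2/4$ on the first range and $h(x-1) \geq x-1$ on the second. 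You instead stop at the second-order Lagrange form $R = \tfrac{(q-p)^2}{2(1-\xi)}$ and use the single uniform Bennett-type inequality $h(y) \geq \tfrac{y^2}{2(1+y)}$, which after substituting $y = (q-p)/p$ gives $p\,h(q/p-1) \geq \tfrac{(q-p)^2}{2q}$; the $(q-p)^2$ factors cancel exactly and the case split disappears, yielding the clean explicit bound $\tfrac{q}{1-q} \leq 2q$ (which also makes the nonnegativity of the error, hence the removal of the absolute value, immediate, just as the paper notes at the end via $f_p(q) \geq 0$). Your version is slightly sharper (the bound involves only $q$, with explicit constant $2$) and more unified; the paper's version requires no knowledge of the inequality $h(y) \geq y^2/(2(1+y))$ but pays for it with the two-case analysis. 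All the calculus steps you cite check out: $\partial_q^2 R = 1/(1-q)$, and $h(y) - y^2/(2(1+y))$ indeed has second derivative $y(2+y)/(1+y)^3 \geq 0$ and vanishes to first order at $y=0$.
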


Recall that $\cutoff[D] \leq \scaling[C]^2$ by Lemma~\ref{lem:cutoff_definition}, and therefore $\max_{i,j \in D} p_{ij} \cutoff[D] = \smallO(1)$ by Assumption~\ref{ass:lower_bound_sparsity}. Similarly, it follows that $\max_{i,j \in D} p_{ij} \scaling[C] = \smallO(1)$ and $\max_{i,j \in D} p_{ij} = \smallO(1)$. Then, using Lemma~\ref{lem:h_ratio} we obtain the bounds, uniformly over $i, j, \in D$,
\begin{align}
\left| \frac{H_{p_{ij}}(p_{ij} \cutoff[D])}{p_{ij} \mspace{1mu} h\Bigl(\cutoff[D] - 1\Bigr)} - 1 \right|
  &= \bigO\bigl(p_{ij} (\cutoff[D] + 1)\bigr)
  \leq \max_{i, j \in D} \, \bigO\bigl(p_{ij} (\cutoff[D] + 1)\bigr)
  = \smallO(1) \,,\\
\left| \frac{H_{p_{ij} \scaling[C]}(p_{ij} \cutoff[D])}{p_{ij} \scaling[C] \mspace{1mu} h\Bigl(\frac{\cutoff[D]}{\scaling[C]} - 1\Bigr)} - 1\right|
  &= \bigO\bigl(p_{ij} (\cutoff[D] + \scaling[C])\bigr)
  \leq \max_{i, j \in D} \, \bigO\bigl(p_{ij} (\cutoff[D] + \scaling[C])\bigr)
  = \smallO(1) \,.
\end{align}
Using the uniform bounds above, we can express $\Delta_{ij}^{\!\scriptscriptstyle(2)}$ from \eqref{eq:delta2} in terms on the function $h(x)$. This gives, uniformly over $i, j \in D$, 
\begin{align}
\Delta_{ij}^{\!\scriptscriptstyle(2)}
  &= H_{p_{ij}}(p_{ij} \cutoff[D]) - 2H_{\scaling[C_1] p_{ij}}(p_{ij} \cutoff[D])\\
  &= (1 + \smallO(1)) \left( p_{ij} h\Bigl(\cutoff[D] - 1\Bigr) - 2\scaling[C_1] p_{ij} h\Bigl(\frac{\cutoff[D]}{\scaling[C_1]} - 1\Bigr) \right) \,.
\end{align}
Therefore, for $D \in \truncationset[C_1]$, we have
\begin{align}
\hspace{15pt}&\hspace{-15pt}
\frac{1}{|D|} \sum_{i < j \in D} \Delta_{ij}^{\!\scriptscriptstyle(2)} - \log\!\left(\frac{n |D|}{r^2}\right)\\
  &= (1 + \smallO(1)) \frac{1}{|D|} \sum_{i < j \in D} \left[p_{ij} h\Bigl(\cutoff[D] - 1\Bigr) - 2\scaling[C_1] p_{ij} h\Bigl(\frac{\cutoff[D]}{\scaling[C_1]} - 1\Bigr)\right] - \log\!\left(\frac{n |D|}{r^2}\right)\\
  &= \begin{multlined}[t][0.88\displaywidth]
  (1 + \smallO(1)) \left[\frac{\E_0[e(D)]}{|D|} h\Bigl(\cutoff[D] - 1\Bigr) - 2\frac{\E_{C_1}[e(D)]}{|D|} h\Bigl(\frac{\cutoff[D]}{\scaling[C_1]} - 1\Bigr)\right]\\*
    {} - \left(\log\!\left(\frac{n}{|D|}\right) - 2\log\!\left(\frac{r}{|D|}\right)\right) \,.
  \end{multlined}
\end{align}
Then, by definition of $\cutoff[D]$ in Lemma~\ref{lem:cutoff_definition} and $a_n$ in Lemma~\ref{lem:cutoff_property}, this becomes
\begin{multline}
\max_{C_1 \subseteq V,\, |C_1|=r} \, \max_{D \in \truncationset[C_1]} \frac{1}{|D|} \sum_{i < j \in D} \Delta_{ij}^{\!\scriptscriptstyle(2)} - \log\!\left(\frac{n |D|}{r^2}\right)\\[-1ex]
\begin{aligned}[b]
  &\leq \max_{C_1 \subseteq V,\, |C_1|=r} \, \max_{D \in \truncationset[C_1]} 2\left(\log\!\left(\frac{r}{|D|}\right) - (1 + \smallO(1))\frac{\E_{C_1}[e(D)] h\Bigl(\frac{\cutoff[D]}{\scaling[C_1]} - 1\Bigr)}{|D|}\right)\\
  &\leq -2 a_n 
    \to -\infty \,.
\end{aligned}
\end{multline}
Combining the above and grouping the sets $D \in \truncationset[C_1]$ by their size $|D|$, together with \eqref{eq:hypergeometric_pdf_bound}, we obtain
\begin{align}
\textup{\textsf{P}}_2
  &\leq \bar{\E}^{\otimes 2}\biggl[\1{\{D \in \truncationset[C_1]\}} \, \exp\biggl(\sum_{i < j \in D} \Delta_{ij}^{\!\scriptscriptstyle(2)}\biggr)\biggr]\\
  &\leq \sum_{k = 1}^{r} \exp\left(k \left(-2 a_n + \log\!\left(\frac{nk}{r^2}\right)\right)\right) \, \bar{\P}(|D| = k)\\
  &\leq \sum_{k = 1}^{r} \exp\left(k \left(-2 a_n + \log\!\left(\frac{nk}{r^2}\right) - \log\!\left(\frac{nk}{r^2}\right) + \bigO(1)\right)\right)\\
  &\leq \sum_{k = 1}^{r} \exp\bigl(k (-2 a_n + \bigO(1))\bigr)
  \:\to\: \vphantom{\frac{1}{2}}0 \,,
\end{align}
where the final step follows because $a_n \to \infty$ by Lemma~\ref{lem:cutoff_property}. This shows that $\textup{\textsf{P}}_2 = \smallO(1)$.

Following our steps, we conclude that $\E_0[\tilde{L}] \to 1$ and $\E_0[\tilde{L}^2] = \textup{\textsf{P}}_1 + \textup{\textsf{P}}_2 \leq 1 + \smallO(1)$, and therefore $\bar{\risk}_n(\testlr) \to 1$. Finally, the risk of any test $\test_n$ is bounded by the average risk of the likelihood ratio test, that is $\risk_n(\test_n) \geq \bar{\risk}_n(\testlr) \to 1$, completing the proof of Theorem\ref{thm:lower_bound}.
\end{proof}

%%%%%%%%%%%%%%%%%%%%%%%%%%%%%%%%%%%%%%%%%%%%%%%%%%%%%%%%%%%%%%%%%%%%%%%%%%%%%%%%
%%%%%%%%%%%%%%%%%%%%%%%%%%%%%%%%%%%%%%%%%%%%%%%%%%%%%%%%%%%%%%%%%%%%%%%%%%%%%%%%
\subsection{Proof of auxiliary results}
\label{subsec:proof_of_auxiliary_results}
In this section we provide the proofs for Lemmas \ref{lem:cutoff_definition}, \ref{lem:cutoff_property}, and \ref{lem:h_ratio}. To simplify this, we first compile Assumptions \ref{ass:lower_bound_maximum_inhomogeneity} and \ref{ass:lower_bound_maximum_community_small} into a single result. This is the only place in the proof of Theorem~\ref{thm:lower_bound} where Assumptions \ref{ass:lower_bound_maximum_inhomogeneity} and \ref{ass:lower_bound_maximum_community_small} are used directly. Thus, Theorem~\ref{thm:lower_bound} can simply be extended to other assumptions, provided one can prove Lemma~\ref{lem:maximal_set_ratio} below under the new set of assumptions made.

\begin{restatable}{lemma}{maximalsetratio}
\label{lem:maximal_set_ratio}
Let \eqref{eq:lower_bound_condition}, Assumption~\ref{ass:lower_bound_sparsity}, and either Assumption \ref{ass:lower_bound_maximum_inhomogeneity} or \ref{ass:lower_bound_maximum_community_small} hold. Then, for all $C \subseteq V$ of size $|C| = r$ and for all $D \in \mathcal{E}_C$,
\begin{equation}
\label{eq:maximal_set_ratio}
\frac{\log(r / |D|)}{\log(n / r)} \bigl(\log(\scaling[C]) \vee 1\bigr) = \smallO(1) \,.
\end{equation}
Furthermore, $\log(n / r) / \log(\scaling[C]) \to \infty$ for all $C \subseteq V$ of size $|C| = r$.
%\noindent\textcolor{gray}{Let Assumption~\ref{ass:lower_bound_sparsity}, and either Assumption \ref{ass:lower_bound_maximum_inhomogeneity} or \ref{ass:lower_bound_maximum_community_small} hold. Then
%\begin{equation}
%\max_{C \subseteq V, |C| = r} \max_{D \in \truncationset[C]} \: \frac{\log(r / |D|)}{\log(n / r)} \bigl(\log(\scaling[C]) \vee 1\bigr) = \smallO(1) \,.
%\end{equation}
%Furthermore, we have $\max_{C \subseteq V, |C| = r} \log(\scaling[C]) / \log(n / r) = \smallO(1)$.}
\end{restatable}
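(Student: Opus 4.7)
The plan is to split the proof according to which of Assumptions~\ref{ass:lower_bound_maximum_inhomogeneity} and \ref{ass:lower_bound_maximum_community_small} is in force. In both cases, the starting observation is that the constraint $\scaling[C] p_{ij} \leq 1$ for $i,j \in C$ gives $\scaling[C] \leq 1/\overline{p}_C$, and hence $\log \scaling[C] \leq \log(1/\overline{p}_C)$.

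Under Assumption~\ref{ass:lower_bound_maximum_community_small} the proof is essentially immediate. Part \ref{ass:lbl:lower_bound_maximum_community_small_size} gives $r = n^{\smallO(1)}$, hence $\log r = \smallO(\log(n/r))$; part \ref{ass:lbl:lower_bound_maximum_community_small_sparsity} together with $\log \scaling[C] \leq \log(1/\overline{p}_C)$ gives $\log \scaling[C] = \smallO(\log(n/r)/\log r)$. Since $\log(r/|D|) \leq \log r$ and $\log \scaling[C] \vee 1$ is either $1$ or $\smallO(\log(n/r)/\log r)$, the product $\log(r/|D|)(\log \scaling[C] \vee 1)$ is $\smallO(\log(n/r))$ in either case; the second assertion follows since $\log r \to \infty$ forces $\log(n/r)/\log \scaling[C] \to \infty$.

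Under Assumption~\ref{ass:lower_bound_maximum_inhomogeneity} the first step is to show that $\scaling[C] \to 1$. Applying condition \eqref{eq:lower_bound_condition} with $D = C$ yields $\overline{p}_C \mspace{1mu} h(\scaling[C] - 1) \leq 2(1-\epsilon) \log(n/r)/(r-1)$, and part \ref{ass:lbl:lower_bound_maximum_inhomogeneity_sparsity} ensures $r \overline{p}_C/\log(n/r) \to \infty$, so $h(\scaling[C] - 1) \to 0$, which forces $\scaling[C] \to 1$ because $h$ is continuous, strictly increasing on $[0, \infty)$ and vanishes only at $0$. This settles the second assertion (as $\log(n/r) \to \infty$) and reduces the first to showing $\log(r/|D|) = \smallO(\log(n/r))$ for every $D \in \truncationset[C]$. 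I would then split by the cut-off from part \ref{ass:lbl:lower_bound_maximum_inhomogeneity_inhomogeneity}: when $|D| \geq r(r/n)^{\gamma_n}$ we get $\log(r/|D|) \leq \gamma_n \log(n/r) = \smallO(\log(n/r))$ directly.

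The bulk of the work, and the main obstacle, is ruling out the complementary regime $|D| < r(r/n)^{\gamma_n}$. There, part \ref{ass:lbl:lower_bound_maximum_inhomogeneity_inhomogeneity} gives $\E_0[e(D)] \leq \tfrac{|D|-1}{2} \delta r \overline{p}_C$. Plugging this into the defining inequality of $\truncationset[C]$, combining with the bound on $\overline{p}_C \mspace{1mu} h(\scaling[C]-1)$ above, and using $(\scaling[C]-1)^2/h(\scaling[C]-1) \to 2$ (which follows from $\scaling[C] \to 1$), a short calculation yields
\[
\log(r/|D|) > (1 - 2\delta)\log(n/r)(1 + \smallO(1)) - b_n.
\]
Combined with $r = \bigO(n^{1/2-\delta})$ and $b_n = \log\log(n/r)$ this forces $\log |D| \leq \delta(2\delta - 1)\log n + \smallO(\log n)$, and since $\delta(2\delta - 1) < 0$ for $\delta \in (0, 1/2)$, we obtain $|D| < 2$ for $n$ large enough. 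But $D \in \truncationset[C]$ requires $\E_0[e(D)] > 0$, hence $|D| \geq 2$, a contradiction; so this regime is empty and the proof is complete.
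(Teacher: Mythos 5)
Your proposal is correct and follows essentially the same route as the paper: under Assumption~\ref{ass:lower_bound_maximum_community_small} the same direct computation via $\scaling[C]\,\overline{p}_C\le 1$ and parts (i)--(ii), and under Assumption~\ref{ass:lower_bound_maximum_inhomogeneity} the same ingredients (condition \eqref{eq:lower_bound_condition} at $D=C$ combined with part (iii) to force $\scaling[C]\to 1$, then part (ii) together with $r=\bigO(n^{1/2-\delta})$ to rule out sets $D\in\truncationset[C]$ of size below $r/(n/r)^{\gamma_n}$, leaving $\log(r/|D|)\le\gamma_n\log(n/r)$). The only cosmetic differences are that the paper works with an auxiliary $\eta_C\ge\scaling[C]$ defined by an equality and shows directly that all small subsets violate the defining inequality of $\truncationset[C]$, whereas you use $\scaling[C]$ itself and phrase the exclusion as a contradiction.
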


%%%%%%%%%%%%%%%%%%%%%%%%%%%%%%%%%%%%%%%%%%%%%%%%%%%%%%%%%%%%%%%%%%%%%%%%%%%%%%%%
%%%%%%%%%%%%%%%%%%%%%%%%%%%%%%%%%%%%%%%%%%%%%%%%%%%%%%%%%%%%%%%%%%%%%%%%%%%%%%%%
\subsubsection{Proof of Lemma~\ref{lem:maximal_set_ratio}}
%\todo[color=green!40,size=\scriptsize,inline]{\vspace{-4.5ex}\protect\maximalsetratio*}
\begin{proof}[\unskip\nopunct]
Below we consider two cases depending on whether Assumption~\ref{ass:lower_bound_maximum_inhomogeneity} or Assumption~\ref{ass:lower_bound_maximum_community_small} holds. We note that some of these inequalities below only hold when $n$ is large enough.

\paragraph{\normalfont\emph{Case 1} (Assumptions \ref{ass:lower_bound_sparsity} and \ref{ass:lower_bound_maximum_inhomogeneity} hold):}
For all $C \subseteq V$ of size $|C| = r$, define $\eta_C \geq \scaling[C]$, such that
\begin{equation}
\frac{|C| \mspace{2mu} \overline{p}_C \mspace{1mu} h(\eta_C - 1)}{2 \mspace{1mu} \log(n / r)} = 1 - \frac{2}{3} \mspace{2mu} \epsilon \,,
\end{equation}
where $\epsilon$ comes from \eqref{eq:lower_bound_condition}. Further, by Assumption~\ref{ass:lower_bound_maximum_inhomogeneity}~\ref{ass:lbl:lower_bound_maximum_inhomogeneity_sparsity}, we obtain
\begin{equation}
\label{eq:xi_convergence_to_one}
h(\eta_C - 1) \leq \frac{2 \mspace{1mu} \log(n / r)}{r \mspace{2mu} \overline{p}_C} = \smallO(1) \,.
\end{equation}
Hence, $\eta_C \to 1$ and thus $(\eta_C - 1)^2 / h(\eta_C - 1) \to 2$ for every $C \subseteq V$ of size $|C| = r$. Using this together with Assumption~\ref{ass:lower_bound_maximum_inhomogeneity}~\ref{ass:lbl:lower_bound_maximum_inhomogeneity_inhomogeneity}, we obtain, for all $C \subseteq V$ of size $|C| = r$ and for all $D \subseteq C$ of size $|D| < r / \smash{(n / r)^{\gamma_n}}$, that
\begin{align}
(\scaling[C] - 1)^2 \frac{\E_0[e(D)]}{|D|}
  &\leq (\eta_C - 1)^2 \frac{|D| \mspace{2mu} \overline{p}_D}{2}
  \leq \delta (\eta_C - 1)^2 \frac{|C| \mspace{2mu} \overline{p}_C}{2}\\
  &= \bigl(1 - \frac{2}{3} \mspace{2mu} \epsilon\bigr) \log\left(\frac{n}{r}\right) \mspace{3mu} \delta \mspace{2mu} \frac{(\eta_C - 1)^2}{h(\eta_C - 1)}\\
  &\leq \bigl(1 - \frac{2}{3} \mspace{2mu} \epsilon\bigr) \log\left(\frac{n}{r}\right) \mspace{3mu} 2 \mspace{1mu} \delta \mspace{3mu} (1 + \smallO(1))\\
\label{eq:truncationset_bound}
  &\leq (1 - \epsilon / 2) \left(\log\left(\frac{n |D|}{r^2}\right) - b_n\right) \,,
\end{align}
where we recall that $b_n = \log\log(n / r)$. Furthermore, the final inequality above (in \eqref{eq:truncationset_bound}) follows since
\begin{align}
2 \delta \log(n/r)
  &\leq 2 \delta \log(n)\\
  &\leq \log(n / r^2) + \bigO(1)\\
  &\leq \log(n |D| / r^2) + \bigO(1)\\
  &\leq (1 + \smallO(1)) \, (\log(n |D| / r^2) - b_n) \,,
\end{align}
because $r = \bigO(\smash{n^{1/2 - \delta}})$ by Assumption~\ref{ass:lower_bound_maximum_inhomogeneity}~\ref{ass:lbl:lower_bound_maximum_inhomogeneity_size}.

Therefore, by definition of $\truncationset[C]$ (see \eqref{eq:truncationset_definition}) it follows that, for all $C \subseteq V$ of size $|C| = r$ and $D \in \truncationset[C]$, we have $|D| \geq r / (n/r)^{\gamma_n}$, or equivalently $\log(r / |D|) / \log(n / r) \leq \gamma_n = \smallO(1)$. Furthermore, by \eqref{eq:xi_convergence_to_one} we have $\scaling[C] \to 1$ for all $C \subseteq V$ of size $|C| = r$. Combining this, we obtain
\begin{equation}
\frac{\log(r / |D|)}{\log(n / r)} \bigl(\log(\scaling[C]) \vee 1\bigr)
  \leq \frac{\log(r / |D|)}{\log(n / r)}
  \leq \gamma_n
  = \smallO(1) \,.
\end{equation}
This shows that \eqref{eq:maximal_set_ratio} holds.

To complete the proof, we need to show that $\log(n / r) / \log(\scaling[C]) \to \infty$ for all $C \subseteq V$ of size $|C| = r$. This is trivial because $\scaling[C] \to 1$, and therefore we have proved Lemma~\ref{lem:maximal_set_ratio} when Assumptions \ref{ass:lower_bound_maximum_inhomogeneity} and \ref{ass:lower_bound_sparsity} hold.

\paragraph{\normalfont\emph{Case 2} (Assumptions \ref{ass:lower_bound_sparsity} and \ref{ass:lower_bound_maximum_community_small} hold):}
For all $C \subseteq V$ of size $|C| = r$ we have $\scaling[C] \overline{p}_C \leq 1$, and therefore
\begin{equation}
\log(\scaling[C])
  \leq \log(1 / \overline{p}_C) \,.
\end{equation}
Hence, by Assumption~\ref{ass:lower_bound_maximum_community_small} \ref{ass:lbl:lower_bound_maximum_community_small_size} and \ref{ass:lbl:lower_bound_maximum_community_small_sparsity}, we obtain, for all $C \subseteq V$ of size $|C| = r$,
\begin{equation}
\frac{\log(r / |D|)}{\log(n / r)} \bigl(\log(\scaling[C]) \vee 1\bigr)
  \leq \frac{\log(r)}{\log(n / r)} \bigl(\log(1 / \overline{p}_C) \vee 1\bigr)
  = \smallO(1) \,.
\end{equation}
This shows that \eqref{eq:maximal_set_ratio} holds. Similarly, for all $C \subseteq V$ of size $|C| = r$, we obtain
\begin{equation}
\frac{\log(\scaling[C])}{\log(n / r)}
  \leq \frac{\log(r)}{\log(n / r)} \log(1 / \overline{p}_C)
  = \smallO(1) \,,
\end{equation}
which shows that $\log(n / r) / \log(\scaling[C]) \to \infty$.

This proves Lemma~\ref{lem:maximal_set_ratio} when Assumptions \ref{ass:lower_bound_maximum_community_small} and \ref{ass:lower_bound_sparsity} hold.
\end{proof}

%%%%%%%%%%%%%%%%%%%%%%%%%%%%%%%%%%%%%%%%%%%%%%%%%%%%%%%%%%%%%%%%%%%%%%%%%%%%%%%%
%%%%%%%%%%%%%%%%%%%%%%%%%%%%%%%%%%%%%%%%%%%%%%%%%%%%%%%%%%%%%%%%%%%%%%%%%%%%%%%%
\subsubsection{Proof of Lemma~\ref{lem:cutoff_definition}}
%\todo[color=green!40,size=\scriptsize,inline]{\vspace{-4.5ex}\begin{lemma*}
%Let Assumption~\ref{ass:lower_bound_sparsity}, and either Assumption \ref{ass:lower_bound_maximum_inhomogeneity} or \ref{ass:lower_bound_maximum_community_small} hold. Then for any $C \subseteq V$ of size $|C| = r$, $D \in \truncationset[C]$ there exists a unique number $\cutoff[D] \geq 1$, such that for $n$ large enough,
%\begin{equation}
%(1 + \epsilon) \E_0[e(D)] h(\cutoff[D] - 1) = |D| \log\left(\frac{n}{|D|}\right) \,.
%\end{equation}
%Moreover, $\cutoff[D]$ satisfies $\theta_{ij}(\cutoff[D] \, p_{ij}) \leq 2\theta_{ij}(\scaling[C] \, p_{ij})$ for every $i, j \in D$.
%\end{lemma*}}

\begin{proof}[\unskip\nopunct]
Begin by defining $\widetilde{q}_{ij}$ by
\begin{equation}
\label{eq:q_definiton}
\frac{\widetilde{q}_{ij} \, p_{ij}}{1 - \widetilde{q}_{ij} \, p_{ij}}
  = \frac{\bigl(\scaling[C] p_{ij}\bigr)^2}{p_{ij}} \, \frac{(1-p_{ij})}{\bigl(1 - \scaling[C] p_{ij}\bigr)^2} \,,
\end{equation}
which implies that $\theta_{ij}(\widetilde{q}_{ij} \, p_{ij}) = 2\theta_{ij}(\scaling[C] \, p_{ij})$. By Assumption~\ref{ass:lower_bound_sparsity} we have $p_{ij} \to 0$ and $\scaling[C]^2 p_{ij} \to 0$ for every $i,j \in V$ and therefore it follows that $\widetilde{q}_{ij} \asymp \scaling[C]^2$.

We show below that $h(\widetilde{q}_{ij} - 1) \geq (2 + \smallO(1)) (\scaling[C] - 1)^2$ for all $i, j \in D$ when $n$ is large enough. Using this and the fact that $D \in \truncationset[C]$ gives
\begin{align}
(1 + \epsilon) \frac{1}{|D|} \, \E_0[e(D)] h(\widetilde{q}_{ij} - 1)
  &\geq (2 + \smallO(1)) (1 + \epsilon) (\scaling[C] - 1)^2 \frac{\E_0[e(D)]}{|D|}\\
  &\geq (2 + \smallO(1)) (1 + \epsilon) (1 - \epsilon/2) \left(\log\!\left(\frac{n \, |D|}{r^2}\right) - b_n\right)\\
  &\geq 2  (1 + \epsilon / 4) \left(\log\!\left(\frac{n \, |D|}{r^2}\right) - b_n\right)\\
  &\geq 2  (1 + \epsilon / 4) \log\!\left(\frac{n}{|D|} \frac{|D|^2}{r^2} \, \frac{1}{\log(n / r)}\right) \,.
%\end{align}
\intertext{Then by Lemma~\ref{lem:maximal_set_ratio}, for every $D \in \truncationset[C]$, we have $|D| / r \geq (n / r)^{-\smallO(1)}$, and therefore}
%\begin{align}
(1 + \epsilon) \frac{1}{|D|} \, \E_0[e(D)] h(\widetilde{q}_{ij} - 1)
  &\geq 2 (1 + \epsilon / 4) \log\!\left(\frac{n}{|D|} \frac{|D|^2}{r^2} \, \frac{1}{\log(n / r)}\right)\\
  &\geq 2 \log\biggl(\frac{n}{|D|}\biggr) + 2 \log\biggl(\left(\frac{n}{|D|}\right)^{\!\epsilon/4}  \left(\frac{|D|^2}{r^2} \frac{1}{\log(n / r)}\right)^{\!1 + \epsilon/4\,}\biggr)\notag\\
  &\geq 2 \log\biggl(\frac{n}{|D|}\biggr) + \smash{\underbrace{2 \log\biggl(\left(\frac{n}{r}\right)^{\!\epsilon/4 - \smallO(1)(1 + \epsilon/4)}\biggr)}_{\to \, \infty}}\\
  &\geq 2 \log\biggl(\frac{n}{|D|}\biggr) \,.
\end{align}
Note that $h(x - 1)$ is continuous and increasing on $x \geq 1$. This means that, for large enough $n$, there is a unique solution $\cutoff[D] \in (1, \min_{i,j \in D} \widetilde{q}_{ij})$ such that
\begin{equation}
(1 + \epsilon) \frac{1}{|D|} \, \E_0[e(D)] h(\cutoff[D] - 1) = \log\!\left(\frac{n}{|D|}\right) \,.
\end{equation}
Moreover, it follows that $\theta_{ij}(\cutoff[D] \, p_{ij}) \leq \theta_{ij}(\widetilde{q}_{ij} \, p_{ij}) = 2 \theta_{ij}(\scaling[C] \, p_{ij})$ for every $i, j \in D$ because $\cutoff[D] \in (1, \min_{i,j \in D} \widetilde{q}_{ij})$.

We are left to show $h(\widetilde{q}_{ij} - 1) \geq (2 + \smallO(1)) (\scaling[C] - 1)^2$, which we do by considering different cases depending on the asymptotic behavior of $\scaling[C]$ (which is sufficient by Remark~\ref{rem:subsubsequence}).

\paragraph{\normalfont\emph{Case 1} ($\scaling[C] \to 1$):}
By definition of $\widetilde{q}_{ij}$,
\begin{equation}
\widetilde{q}_{ij} - 1
  = (\scaling[C] - 1)\left(1 + \frac{(1 - p_{ij}) \scaling[C]^2}{1 - p_{ij} (2 \scaling[C] - \scaling[C]^2)}\right)
  \asymp 2 (\scaling[C] - 1) \,.
\end{equation}
Then, using the above together with $h(x - 1) \asymp (x - 1)^2 / 2$ as $x \to 1$, we obtain
\begin{equation}
h\left(\widetilde{q}_{ij} - 1\right)
  \asymp \left(\widetilde{q}_{ij} - 1\right)^2 / 2
  \asymp 2 (\scaling[C] - 1)^2 \,.
\end{equation}

\paragraph{\normalfont\emph{Case 2} ($\scaling[C] \to \alpha \in (1, \infty)$):}
Using $\widetilde{q}_{ij} \asymp \scaling[C]^2$, we obtain
\begin{align}
\frac{h(\widetilde{q}_{ij} - 1)}{(\scaling[C] - 1)^2}
  &\asymp \frac{h(\scaling[C]^2 - 1)}{(\scaling[C] - 1)^2}
  \asymp \frac{\scaling[C]^2 \log(\scaling[C]^2) - \scaling[C]^2 + 1}{(\scaling[C] - 1)^2}\\
  &\asymp 1 + \frac{2 \scaling[C]\bigl( \scaling[C]\log(\scaling[C]) - \scaling[C] + 1\bigr)}{(\scaling[C] - 1)^2}
  \geq 2 + \smallO(1) \,.
\end{align}

\paragraph{\normalfont\emph{Case 3} ($\scaling[C] \to \infty$):} 
Using $\widetilde{q}_{ij} \asymp \scaling[C]^2$ and $h(x - 1) \asymp x \log(x)$ as $x \to \infty$, we obtain
\begin{equation}
\frac{h(\widetilde{q}_{ij} - 1)}{(\scaling[C] - 1)^2}
  = (1 + \smallO(1)) \frac{\widetilde{q}_{ij} \log(\widetilde{q}_{ij})}{\scaling[C]^2}
  \geq (2 + \smallO(1)) \log(\scaling[C])
  \to \infty \,.
\end{equation}
In particular, $h(\widetilde{q}_{ij} - 1) \geq 2 (\scaling[C] - 1)^2$ when $n$ is large enough.
\end{proof}

%%%%%%%%%%%%%%%%%%%%%%%%%%%%%%%%%%%%%%%%%%%%%%%%%%%%%%%%%%%%%%%%%%%%%%%%%%%%%%%%
%%%%%%%%%%%%%%%%%%%%%%%%%%%%%%%%%%%%%%%%%%%%%%%%%%%%%%%%%%%%%%%%%%%%%%%%%%%%%%%%
\subsubsection{Proof of Lemma~\ref{lem:cutoff_property}}
%\todo[color=green!40,size=\scriptsize,inline]{\vspace{-4.5ex}\protect\cutoffproperty*}
\begin{proof}[\unskip\nopunct]
First note that \eqref{eq:lower_bound_condition} implies
\begin{equation}
h(\scaling[C] - 1)\leq (1 - \epsilon) \frac{|D| \log(n / |D|)}{\E_0[e(D)]} \,,
\end{equation}
and Lemma~\ref{lem:cutoff_definition} implies
\begin{equation}
h(\cutoff[D] - 1)= \frac{1}{1 + \epsilon} \, \frac{|D| \log(n / |D|)}{\E_0[e(D)]} \,.
\end{equation}
Therefore,
\begin{equation}
\label{eq:h_ratio_cutoff_scaling}
\frac{h(\cutoff[D] - 1)}{h(\scaling[C] - 1)}\geq \frac{1}{1-\epsilon^2} \,.
\end{equation}

To prove the lemma we consider three different cases depending on the asymptotic behavior of $\scaling[C]$ (any other case is handled as in Remark~\ref{rem:subsubsequence}).

\paragraph{\normalfont\emph{Case 1} ($\scaling[C] \to 1$):} 
From the proof of Lemma~\ref{lem:cutoff_definition} we have $\cutoff[D] \in \bigl(1, \min_{i,j \in D} \widetilde{q}_{ij}\bigr)$, where $\widetilde{q}_{ij} \asymp \scaling[C]^2 \to 1$, and therefore $\cutoff[D] \to 1$. Then using $h(x - 1) \asymp (x - 1)^2 / 2$ as $x \to 1$ together with \eqref{eq:h_ratio_cutoff_scaling}, we obtain
\begin{align}
\frac{(\cutoff[D] - 1)^2}{(\scaling[C] - 1)^2}
  \asymp \frac{h(\cutoff[D] - 1)}{h(\scaling[C] - 1)}
  \geq \frac{1}{1-\epsilon^2} \,.
\end{align}
Using this, we obtain
\begin{align}
\scaling[C] h\!\left(\frac{\cutoff[D]}{\scaling[C]} - 1\right)
  &\asymp \, \frac{\left(\cutoff[D] - \scaling[C]\right)^2}{2 \scaling[C]}
  = \, \frac{1}{2} \left(\cutoff[D] - 1\right)^2 \left(1 - \frac{\scaling[C] - 1}{\cutoff[D] - 1}\right)^2\\
  &\geq (1 + \smallO(1)) \, h(\cutoff[D] - 1) (1-\sqrt{1-\epsilon^2})
  = \bigOmega(1) \, h(\cutoff[D] - 1) \,.
\end{align}
This result, together with Lemma~\ref{lem:cutoff_definition}, yields
\begin{equation}
\frac{1}{|D|} \, \E_C[e(D)] \, h\!\left(\frac{\cutoff[D]}{\scaling[C]} - 1\right)
  \geq \bigOmega(1) \, \frac{1}{|D|} \, \E_0[e(D)] \, h\left(\cutoff[D] - 1\right)\\
  \geq \bigOmega(1) \, \log\left(n / |D|\right)  \,.
\end{equation}
Finally, by Lemma~\ref{lem:maximal_set_ratio} it follows that $r / |D| \leq (n / r)^{\smallO(1)}$, and therefore
\begin{multline}
(1 - \epsilon) \frac{1}{|D|} \, \E_C[e(D)] \, h\!\left(\frac{\cutoff[D]}{\scaling[C]} - 1\right) - \log\!\left(\frac{r}{|D|}\right)\\
  \geq \bigOmega(1) \log\!\left(\frac{n}{|D|}\right) - \log\!\left(\frac{r}{|D|}\right)
  \geq \bigl(\bigOmega(1) - \smallO(1)\bigr) \log\!\left(\frac{n}{r}\right)
  \to \infty \,.
\end{multline}

\paragraph{\normalfont\emph{Case 2} ($\scaling[C] \to \alpha \in (1, \infty)$):} 
By \eqref{eq:h_ratio_cutoff_scaling} it clearly follows that $\scaling[C]\leq \cutoff[C]$. Also, $h(x-1)$ is convex and has derivative $\log(x)$. It follows that $h(x - 1) - h(\scaling[C] - 1) \leq (x - \scaling[C]) \log(x)$ for $x \geq \scaling[C]$. Using this,
\begin{equation}
\label{eq:hscaling_bound}
\log(\cutoff[D]) (\cutoff[D] - \scaling[C])
  \geq h(\scaling[C] - 1) \left(\frac{h(\cutoff[D] - 1)}{h(\scaling[C] - 1)} - 1\right)
  \geq h(\scaling[C] - 1) \left(\frac{1}{1 - \epsilon^2} - 1\right) \,.
\end{equation}
In particular, this result implies that $\cutoff[D]$ is lower bounded away from $\scaling[C]$ (i.e.\ $\cutoff[D] \geq \scaling[C] + \bigOmega(1)$). Now, using that $h(x)\geq \frac{x}{2}\log(x+1)$ we obtain
\begin{align}
\scaling[C] h\!\left(\frac{\cutoff[D]}{\scaling[C]} - 1\right)
  &\geq \frac{\scaling[C]}{2} \left(\frac{\cutoff[D]}{\scaling[C]} - 1\right)\log\!\left(\frac{\cutoff[D]}{\scaling[C]}\right)
  \geq \frac{\cutoff[D]-\scaling[C]}{2}\left(\log(\cutoff[D]) - \log(\scaling[C])\right)\\
  &\geq \bigOmega(1) \frac{\cutoff[D]-\scaling[C]}{2} \log(\cutoff[D])
  \geq \bigOmega(1) \, h(\scaling[C] - 1) \,,
\end{align}
where the last step follows from the fact that $\cutoff[D]$ is lower bounded away from $\scaling[C]$. To proceed similarly as in case 1, we need to relate $h(\cutoff[D] - 1)$ to $h(\scaling[C] - 1)$. From the proof of case 2 in Lemma~\ref{lem:cutoff_definition} it follows that $\cutoff[D] \leq \widetilde{q}_{ij} \asymp \scaling[C]^2$, and since $\scaling[C]$ is bounded away from $1$ it follows that $h(\scaling[C] - 1) / h(\cutoff[D] - 1) \geq \bigOmega(1)$. Therefore we conclude that
\begin{equation}
\scaling[C] h\!\left(\frac{\cutoff[D]}{\scaling[C]} - 1\right)
  \geq \bigOmega(1) \, h(\cutoff[D] - 1) \,.
\end{equation}
From this point onward the proof continues as in case 1.

\paragraph{\normalfont\emph{Case 3} ($\scaling[C] \to \infty$):} 
We have $\cutoff[D]\geq \scaling[C]\to\infty$ and $h(x - 1) \asymp x \log(x)$ as $x \to \infty$. Therefore it follows by \eqref{eq:h_ratio_cutoff_scaling} that
\begin{align}
\frac{1}{1 - \epsilon^2}
  \leq \frac{h(\cutoff[D] - 1)}{h(\scaling[C] - 1)}
  \asymp \frac{\cutoff[D] \log(\cutoff[D])}{\scaling[C] \log(\scaling[C])}
  \asymp \frac{\cutoff[D]}{\scaling[C]}\left(1 + \frac{\log(\cutoff[D] / \scaling[C])}{\log(\scaling[C])}\right) \,.
\end{align}
Hence, $\cutoff[D] / \scaling[C] \geq 1+\bigOmega(1)$. Further, using $\cutoff[D] \leq \widetilde{q}_{ij} \asymp \scaling[C]^2$, we obtain
\begin{align}
\frac{\scaling[C] h(\cutoff[D] / \scaling[C] - 1)}{h(\cutoff[D] - 1)}
  &\asymp \frac{\cutoff[D] \log(\cutoff[D] / \scaling[C]) - \cutoff[D] + \scaling[C]}{\cutoff[D] \log(\cutoff[D])}\\
  &= \frac{\log(\cutoff[D] / \scaling[C])}{\log(\cutoff[D])} + \smallO(1)
  \geq \bigOmega(1) \frac{1}{\log(\cutoff[D])}
  \geq \bigOmega(1) \frac{1}{\log(\scaling[C])} \,.
\end{align}
Here it was crucial to use the fact that $\cutoff[D] / \scaling[C]$ is lower bounded away from 1. Finally, by Lemma~\ref{lem:maximal_set_ratio} we obtain $\log(r / |D|) \leq \smallO(\log(n / r) / \log(\scaling[C]))$, and therefore we get
\begin{align}
\hspace{60pt}&\hspace{-60pt}
(1 - \epsilon) \frac{1}{|D|} \, \E_C[e(D)] \, h\!\left(\frac{\cutoff[D]}{\scaling[C]} - 1\right) - \log\!\left(\frac{r}{|D|}\right)\\
  &\geq \bigOmega(1) \frac{1}{|D|} \, \E_0[e(D)] \, \frac{h(\cutoff[D] - 1)}{\log(\scaling[C])} - \log\!\left(\frac{r}{|D|}\right)\\
  &\geq \bigOmega(1) \frac{\log(n / |D|)}{\log(\scaling[C])} - \log\!\left(\frac{r}{|D|}\right)\\
  &\geq \bigl(\bigOmega(1) - \smallO(1)\bigr) \frac{\log(n / r)}{\log(\scaling[C])}
  \to \infty \,,
\end{align}
where $\log(n / r) / \log(\scaling[C]) \to \infty$ follows from Lemma~\ref{lem:maximal_set_ratio}.
\end{proof}

\subsubsection{Proof of Lemma~\ref{lem:h_ratio}}
%\todo[color=green!40,size=\scriptsize,inline]{\vspace{-4.5ex}\protect\hratio*}
\begin{proof}[\unskip\nopunct]
Define the function
\begin{equation}
f_{p}(q)
  \coloneqq H_{p}(q) - p \, h\!\left(\frac{q}{p} - 1\right)\\
  = (q - p) + (1 - q) \log\!\left(\frac{1 - q}{1 - p}\right) \,.
\end{equation}
Then the derivatives of $f_p(q)$ are given by
\begin{equation}
\frac{\partial f_p(q)}{\partial q}
  = \log\!\left(\frac{1 - p}{1 - q}\right) \,,
\quad\quad\quad
\frac{\partial^2 f_p(q)}{\partial q^2}
  = \frac{1}{1 - q} \,,
\quad\quad\quad
\frac{\partial^3 f_p(q)}{\partial q^3}
  = \frac{1}{(1 - q)^2} \,.
\end{equation}
Therefore, for $0 < p < q$, a Taylor expansion of $q$ around $p$ shows that there exists $\xi \in [p, q]$ such that
\begin{equation}
f_p(q) = \frac{1}{2(1 - p)} (q - p)^2 + \frac{1}{6 (1 - \xi)^2} (q - p)^3 \,.
\end{equation}

Now, we continue by considering two cases depending of the value of $q / p$.

\paragraph{\normalfont\emph{Case 1} ($q / p \leq 5$):} 
Here we use that $h(x - 1) \geq (x - 1)^2 / 4$ for all $1 < x \leq 5$. Therefore, 
\begin{align}
\frac{f_p(q)}{p  h\bigl(\frac{q}{p} - 1\bigr)}
  &\leq \frac{4 p \, f_p(q)}{(q - p)^2}\\
  &= \frac{4 p}{(q - p)^2} \left[\frac{(q - p)^2}{2(1 - p)} + \frac{(q - p)^3}{6(1 - \xi)^2}\right]\\
  &= \frac{2p}{1 - p} + \frac{2}{3} \, \frac{q - p}{(1 - \xi)^2}\\
  &\leq \bigO(p) + \bigO(q) \,,
\end{align}
for some $\xi \in [p, q]$.

\paragraph{\normalfont\emph{Case 2} ($q / p > 5$):} 
Here we use that $h(x - 1) \geq (x - 1)$ for all $x > 5$. Therefore,
\begin{equation}
\frac{f_p(q)}{p  h\bigl(\frac{q}{p} - 1\bigr)}
  \leq \frac{f_p(q)}{q - p}
  = \frac{q - p}{2(1 - p)} + \frac{(q - p)^2}{6(1 - \xi)^2}
  \leq \bigO(p) + \bigO(q) \,,
\end{equation}
for some $\xi \in [p, q]$.

To complete the proof, note that $f_p(q) \geq 0$ and $p  h\bigl(\frac{q}{p} - 1\bigr) \geq 0$ for all $0 < p < q < 1$. Therefore, it follows that
\begin{equation*}
0 \leq \frac{f_p(q)}{p  h\bigl(\frac{q}{p} - 1\bigr)} \leq \bigO(p + q) \,. \tag*{\raisebox{-10pt}{$\qedhere$}}
\end{equation*}
\end{proof}

%%%%%%%%%%%%%%%%%%%%%%%%%%%%%%%%%%%%%%%%%%%%%%%%%%%%%%%%%%%%%%%%%%%%%%%%%%%%%%%%
%%%%%%%%%%%%%%%%%%%%%%%%%%%%%%%%%%%%%%%%%%%%%%%%%%%%%%%%%%%%%%%%%%%%%%%%%%%%%%%%
\subsection{Derivation of equation~(\ref{eq:edge_count_identity})}
\label{subsec:derivation_edge_count_identity}
The choice of estimator in Section~\ref{subsec:scan_test_for_unknown_rank1_edge_probabilities} is based on the equality from \eqref{eq:edge_count_identity}. In this section we give a more detailed derivation of this equality. First, observe that $\sum_{i \notin D} \w_i \geq \sum_{i \in D} \w_i$, which is ensured by Assumption~\ref{ass:unknown_probability_maximum_inhomogeneity}. To see this, note that $r \frac{\w_\text{max}}{\w_\text{min}} \leq r^{4/3} \wedge \sqrt{n \, r} \leq n^{4/5}$. Hence, for $n$ large enough,
\begin{equation}
\sum_{i \notin D} \w_i - \sum_{i \in D} \w_i
%  \geq (n - r) \w_\text{min} - r \w_\text{max}\\[-8pt]
  = \w_\text{min} \left((n - r) - r \frac{\w_\text{max}}{\w_\text{min}}\right)
  \geq \w_\text{min} \bigl((1 + \smallO(1))n - n^{4/5}\bigr)
  > 0 \,.
\end{equation}
Then, using that $\sum_{i \notin D} \w_i \geq \sum_{i \in D} \w_i$, we obtain
\begin{align}
\label{eq:edge_count_identity_initial}
2 \, {\textstyle \sum_{i \in D} \w_i}
  &= \textstyle \sum_{i \in V} \w_i - \Bigl(\sum_{i \notin D} \w_i - \sum_{i \in D} \w_i\Bigr)\\
  &= \sqrt{\Bigl({\textstyle \sum_{i \in V} \w_i}\Bigr)^{\!2}} - \sqrt{\Bigl({\textstyle \sum_{i \notin D} \w_i} - {\textstyle \sum_{i \in D} \w_i}\Bigr)^{\!2}}\\
% &= \sqrt{\Bigl({\textstyle \sum_{i \in V} \w_i}\Bigr)^{\!2}} - \sqrt{\Bigl({\textstyle \sum_{i \notin D} \w_i}\Bigr)^{\!2} + \Bigl({\textstyle \sum_{i \in D} \w_i}\Bigr)^{\!2} - 2 \, {\textstyle \sum_{i \in D}\sum_{j \notin D} \w_i \w_j}}\\
% &= \sqrt{\Bigl({\textstyle \sum_{i \in V} \w_i}\Bigr)^2} - \sqrt{\Bigl({\textstyle \sum_{i \notin D} \w_i}\Bigr)^2 - \Bigl({\textstyle \sum_{i \in D} \w_i}\Bigr)^2 + 2 {\textstyle \sum_{i \in D}\sum_{j \notin D} \w_i \w_j} - 4 {\textstyle \sum_{i \in D}\sum_{j \notin D} \w_i \w_j}}\\
  &= \sqrt{\Bigl({\textstyle \sum_{i \in V} \w_i}\Bigr)^{\!2}} - \sqrt{\Bigl({\textstyle \sum_{i \notin D} \w_i} + {\textstyle \sum_{i \in D} \w_i}\Bigr)^{\!2} - 4 \, {\textstyle \sum_{i \in D}\sum_{j \notin D} \w_i \w_j}}\\
% &= \sqrt{\Bigl({\textstyle \sum_{i \in V} \w_i}\Bigr)^2} - \sqrt{\Bigl({\textstyle \sum_{i \in V} \w_i}\Bigr)^2 - 4 {\textstyle \sum_{i \in D}\sum_{j \notin D} \w_i \w_j}}\\
  &= \sqrt{2 \E_0[e(V)] + {\textstyle \sum_{i \in V} \w_i^2}} - \sqrt{2 \E_0[e(V)] + {\textstyle \sum_{i \in V} \w_i^2} - 4 \E_0[e(D,-D)]} \,,
\end{align}
Finally, plugging \eqref{eq:edge_count_identity_initial} into the definition of $\E_0[e(D)]$, we obtain
\begin{multline}
\E_0[e(D)]
  = \frac{1}{2} \Bigl({\textstyle \sum_{i \in D} \w_i}\Bigr)^{\!2} - \frac{1}{2} {\textstyle \sum_{i \in D} \w_i^2}
  = \frac{1}{8} \Bigl(2 {\textstyle \sum_{i \in D} \w_i}\Bigr)^{\!2} - \frac{1}{2} {\textstyle \sum_{i \in D} \w_i^2}\\
\begin{aligned}[t]
  &= \frac{\!\biggl(\raisebox{-1pt}{$\!\sqrt{2 \E_0[e(V)] + \rule{0pt}{10pt}\smash{\raisebox{2.2pt}{\scalebox{0.8}{$\displaystyle\sum_{i \in V}$}}} \,\w_i^2} - \sqrt{2 \E_0[e(V)] + \rule{0pt}{10pt}\smash{\raisebox{2.2pt}{\scalebox{0.8}{$\displaystyle\sum_{i \in V}$}}} \,\w_i^2 - 4 \E_0[e(D,-D)]}$}\biggr)^{\!2}\!}{8} - \frac{1}{2}\raisebox{2pt}{\scalebox{0.9}{$\displaystyle\sum_{i \in D}$}} \,\w_i^2 \notag\\
  &= \frac{\!\biggl(\raisebox{-1pt}{$\!\sqrt{\E_0[e(V)] + \frac{1}{2} \rule{0pt}{10pt}\smash{\raisebox{2.2pt}{\scalebox{0.8}{$\displaystyle\sum_{i \in V}$}}} \,\w_i^2} - \sqrt{\E_0[e(V)] + \frac{1}{2} \rule{0pt}{10pt}\smash{\raisebox{2.2pt}{\scalebox{0.8}{$\displaystyle\sum_{i \in V}$}}} \,\w_i^2 - 2 \E_0[e(D,-D)]}$}\biggr)^{\!2}\!}{4} - \frac{1}{2} \raisebox{2pt}{\scalebox{0.9}{$\displaystyle\sum_{i \in D}$}} \,\w_i^2 \,. \notag
\end{aligned}
\end{multline}

%%%%%%%%%%%%%%%%%%%%%%%%%%%%%%%%%%%%%%%%%%%%%%%%%%%%%%%%%%%%%%%%%%%%%%%%%%%%%%%%
%%%%%%%%%%%%%%%%%%%%%%%%%%%%%%%%%%%%%%%%%%%%%%%%%%%%%%%%%%%%%%%%%%%%%%%%%%%%%%%%
\paragraph{Acknowledgements.} 
The work of RvdH was supported in part by the Netherlands Organisation for Scientific Research (NWO) through the Gravitation \textsc{Networks} grant 024.002.003.

{\setlength{\emergencystretch}{3em}\printbibliography[title={References}]}
\end{document}